\documentclass[a4paper,11pt]{article}

\usepackage{amsmath}
\usepackage{amssymb}
\usepackage{stmaryrd}
\usepackage{amsthm}
\usepackage{mathrsfs}
\usepackage{latexsym}
\usepackage{amssymb}
\usepackage{amscd}
\usepackage{tikz}
\usepackage{enumerate}
\usepackage[colorlinks=true]{hyperref}
\usepackage{tikz-cd}
\usepackage{adjustbox}
\usepackage{enumitem}
\usepackage{fullpage}
\usepackage{mathabx}
\usepackage{caption}
\usepackage{subcaption}
\usepackage{palatino}
\usepackage{authblk}
\usepackage{float}

\hypersetup{colorlinks=true,linkcolor=[rgb]{0.5, 0.0, 0},citecolor=[rgb]{0.5, 0.0, 0.5}, filecolor=magenta,urlcolor=blue}

\numberwithin{equation}{section}
\newtheorem{theorem}{Theorem}[section]
\newtheorem{proposition}[theorem]{Proposition}
\newtheorem{lemma}[theorem]{Lemma}

\newtheorem{corollary}[theorem]{Corollary}
\theoremstyle{definition}
\newtheorem{rmk}[theorem]{Remark}
\newtheorem{definition}[theorem]{Def{i}nition}
\newtheorem{example}[theorem]{Example}

\newcommand{\tn}{\mathrm}

\usepackage{hyperref}

    \DeclareMathOperator{\Top}{top_{\mkern2mu}}
    
    \newcommand{\op}{\operatorname{op}}
    \newcommand{\enveloping}[1]{{#1}^{\operatorname{e}}}
    \newcommand{\quotient}[2]{#1 /\!/ #2}
    
    \newcommand{\pd}{\operatorname{pd}}
    \newcommand{\Hom}{\operatorname{Hom}}
    \newcommand{\Tor}{\operatorname{Tor}}
    \newcommand{\Ext}{\operatorname{Ext}}
    
    \DeclareMathOperator{\torsion}{\operatorname{Tor}}
    \DeclareMathOperator{\totalComplex}{\operatorname{Tot}}
    
    \DeclareMathOperator{\gldim}{\operatorname{gldim}}
    
    \newcommand{\cat}[1]{\textnormal{#1}}
    \newcommand{\catmod}[1]{#1\text{-}\cat{mod}}

    \newcommand{\triangleHeight}{2.5*0.86}
    \newcommand{\triangleSize}{2.5*1}

\title{Relative Global Dimension of Controllable Extensions}

\author{Kostiantyn Iusenko, Roger R. Primolan*}
\affil{Departamento de Matemática, Univ. de São Paulo, Rua do Matão, 1010, São Paulo, SP, CEP: 05508-090 -- Brazil.}
        
\date{\today}

\begin{document}

\maketitle

\begin{abstract}
  
  We study relative global dimensions of extensions \(B \subseteq A\) of Artin algebras over a perfect field. A controllable extension is introduced as one for which the relative global dimension is determined by the ordinary global dimension of the quotient \(A/AJ(B)A\). General inequalities and sufficient conditions for controllability are established, including the case in which \(J(B)\) is a two-sided ideal of \(A\).
  
  The interaction between relative global dimensions and relative projective dimensions of the regular bimodule is then studied. The class of homologically controllable extensions is introduced, and it is shown to be closed under tensor products. Consequently, for homologically controllable extensions \(B \subseteq A\) and \(D \subseteq C\), we obtain 
    \[
        \operatorname{gldim}(A \otimes C, B \otimes D)
          =
          \operatorname{gldim}(A,B) + \operatorname{gldim}(C,D).
    \]
  The proof relies on a relative Künneth-type result for tensor products of complexes, which may be of independent interest.
  
  Finally, controllable extensions with prescribed relative global dimension are constructed, along with families of non-controllable extensions for which the relative global dimension differs arbitrarily from the global dimension of \(A/AJ(B)A\).

  \medskip

  \noindent\textbf{Keywords:} Extensions of algebras, controllable extensions, relative homological algebra, relative global dimensions.
\end{abstract}
  
\let\thefootnote\relax\footnote{\textit{Email addresses:} iusenko@ime.usp.br (Kostiantyn Iusenko), roger.primolan@alumni.usp.br (Roger R. Primolan, *corresponding author).}
\section{Introduction}

Relative homological algebra, initiated by Hochschild \cite{Hoc56}, provides a
framework for studying extensions of algebras
\(B\subseteq A\) through homological invariants that interpolate between the
homological theories of \(A\) and \(B\). It has received considerably less
attention in the setting of associative algebras than its counterpart in the
representation theory of finite groups. In the latter setting, relative
homological methods have had a profound impact on modular representation
theory and block theory; see, for instance, \cite{Lin18}. By contrast, many
basic questions concerning relative homological dimensions of algebra
extensions remain poorly understood.

Among the central invariants of an extension \(B\subseteq A\) is the relative
global dimension \(\gldim(A,B)\). It interpolates between the homological
dimensions of the two algebras: when \(B=\Bbbk\), it recovers the ordinary
global dimension of \(A\), whereas \(\gldim(A,A)=0\). Early work focused
primarily on the case \(\gldim(A,B)=0\), see \cite{Hir59, HS66, Gre75}, and several
classes of extensions satisfying this property have been identified.
Nevertheless, determining relative global dimensions and understanding which
algebraic features of an extension govern them remain largely open problems.

More recently, relative homological algebra has found applications to
homological conjectures for finite-dimensional algebras, such as the
finitistic dimension conjecture and Han's conjecture; see
\cite{XX13,CLMS22,IM25} and the references therein. In these applications,
the finiteness of the relative global dimension plays a fundamental role in
transferring homological properties between an algebra and a subalgebra.
In a different direction, \cite{IMP25} establishes a relative analogue of the
Auslander--Buchsbaum--Serre theorem, showing that the finiteness of a suitable
relative global dimension characterizes the smoothness of extensions of
commutative Noetherian rings.

The present work is motivated by the problem of understanding to what extent
relative global dimensions can be described in terms of ordinary homological
properties of quotient algebras. More precisely, given an extension
\(B\subseteq A\), we investigate when the quotient
\( A/AJ(B)A \)
controls the relative global dimension. We call such extensions
\emph{controllable}; that is, those satisfying
\[
\gldim(A,B)=\gldim(A/AJ(B)A).
\]
This point of view transfers questions from relative homological algebra to
the ordinary homological theory of Artin algebras.

The first part of the paper develops certain tools for studying controllability. In particular, we establish a general inequality
\(
\gldim(A/AJ(B)A)\leq \gldim(A,B)
\)
and derive several sufficient conditions for equality. Our main result shows
that every extension satisfying
\(J(B)\) is a two-sided ideal in \(A\) is controllable (Theorem \ref{teo:BilateralIdealIsControllable}). Consequently, the relative global dimension of such an
extension is completely determined by the ordinary global dimension of the
quotient algebra \(A/J(B)\).

The second part concerns the relationship between relative global dimensions
and relative projective dimensions of the regular bimodule. We study
controllable extensions satisfying
\( \gldim(A,B)=\pd_{(A^{e},B^{e})}A, \)
which we call \emph{homologically controllable}. This class is closed under
tensor products, yielding formulas analogous to the classical additivity of
global dimension. In particular, if \(B\subseteq A\) and \(D\subseteq C\) are
homologically controllable extensions, then Theorem
\ref{teo:ComputesGlobalDimensionOfTensorProductOfExtensions} gives
\[
  \gldim(A\otimes C,B\otimes D)
  =
  \gldim(A,B)+\gldim(C,D).
\]
In the commutative setting, \cite[Proposition 3.21]{IMP25} establishes the
same formula under mild hypotheses on the extensions. The theorem above therefore extends this additivity result to a broader class
of algebra extensions, including noncommutative ones. A key ingredient in the proof is a relative Künneth-type theorem for tensor
products of complexes; see Theorem~\ref{teo:RelativeKunnethFormula}. More
precisely, we construct explicit homotopies for total complexes arising from
tensor products of complexes admitting relative homotopies. This yields a
relative Künneth formula for Tor groups and provides the homological machinery
needed to establish the additivity of relative global dimensions under tensor
products. We believe that these results may also be of independent interest.

The developed approach also yields constructions of non-trivial extensions
with prescribed relative global dimensions. In particular, every
finite-dimensional algebra \(C\) can be realized as a quotient \(A/J(B)\) of a
controllable extension \(B\subseteq A\) satisfying
\(
\gldim(A,B)=\gldim(C).
\)
We further discuss trivially twisted extensions and compare our methods with
existing constructions from \cite{Guo18}. Finally, we show that the class of
controllable extensions is not exhaustive by constructing families of
non-controllable extensions exhibiting arbitrarily large gaps between
\(\gldim(A,B)\) and \(\gldim(A/AJ(B)A)\).

The paper is organized as follows. Section~\ref{sec:Premilinaries} recalls the necessary background
from relative homological algebra. Section~\ref{sec:ControllableExtensions} introduces controllable
extensions and develops sufficient criteria for controllability. Section~\ref{sec:TensorProductOfExtensions}
studies homologically controllable extensions and their behavior under tensor products. The
final Section~\ref{sec:NonControllableExtensions} is devoted to examples and to the construction of
non-controllable extensions.

\medskip

\textbf{Use of AI tools.} Generative AI tools were used solely for language editing of selected sentences. They were not used for any mathematical work, including the formulation of results, proofs, calculations, or verification. The authors reviewed the resulting text and are fully responsible for the contents of the paper.

\medskip

\textbf{Acknowledgements.} We thank Eduardo N. Marcos for stimulating discussions and for valuable feedback on an earlier draft of this manuscript. We also thank Hipolito Treffinger for posing the question that motivated the investigation in Section~\ref{sec:NonControllableExtensions} as well as John MacQuarrie for stimulating discussions and his help in constructing Example~\ref{ex:NonControllableExtension}. K.I. was partially supported by FAPESP grant 2018/23690-6, CNPq Universal Grant 405540/2023-0, and FAPEMIG Project APQ-03491-25. R.R.P. was partially supported by FAPESP grant 2025/19298-7 and CAPES--Finance Code 001 during their master's dissertation and ongoing PhD thesis.

\section{Preliminaries} \label{sec:Premilinaries}

Throughout this work, \(\Bbbk\) denotes a \emph{perfect field}, and all algebras are Artin algebras over \(\Bbbk\). For an algebra \(A\), we write \(\catmod{A}\) for the category of finitely generated \emph{left} \(A\)-modules, and \(J(A)\) for the \emph{Jacobson radical} of \(A\). For a module \(M \in \catmod{A}\), its \emph{radical} is \(\operatorname{rad}_{A}(M) = J(A)M\), and its \emph{top} is \(\Top_{A}(M) \doteq M / \operatorname{rad}_{A}(M)\). \emph{Right} \(A\)-modules are identified with left modules over the opposite algebra \(\catmod{A^{\op}}\), and, as usual, for any algebra \(C\), the category of \(A\)--\(C\) \emph{bimodules} is identified with \(\catmod{A \otimes_{\Bbbk} C^{\op}}\); in particular, \(A\)--\(A\) bimodules correspond to left modules over the \emph{enveloping algebra} \(\enveloping{A} \doteq A \otimes_{\Bbbk} A^{\op}\).

Relative homological algebra (cf. \cite{Hoc56}) is based on the concept of relatively projective modules, which we now recall. Given an extension of \(\Bbbk\)-algebras \(B \subseteq A\), an \(A\)-module \(M\) is said to be \((A,B)\)\emph{-projective}, or \emph{relatively projective}, if it satisfies one -- and hence all -- of the following equivalent conditions (see \cite[Section 1]{Hoc56} for details):
    \begin{itemize}
        \item[(i)] the multiplication map $\mu_M : A \otimes_B M \to  M$ is a split epimorphism of $A$-modules, where $A\otimes_B M$ carries the natural left $A$-module structure;
        \item[(ii)] \(M\) is isomorphic to a direct summand of the induced module \(A \otimes_B X\), for some \(B\)-module \(X\);
        \item[(iii)] if an \(A\)-module homomorphism onto \(M\) admits a section in the category of \(B\)-modules, then it also admits a section in the category of \(A\)-modules.
    \end{itemize}
 When \(B = \Bbbk\), the \((A,\Bbbk)\)-projective modules are the usual projective \(A\)-modules. On the other end of the spectrum, when \(B = A\), every \(A\)-module is relatively projective. Following the notation of \cite[Page 7]{XX13}, the \emph{full subcategory} of \(\catmod{A}\) defined by all \((A,B)\)-projective modules is denoted by \(\mathcal{P}(A, B)\).

A \emph{complex} of \(A\)-modules 

    \[
        \begin{tikzcd}
            \cdots \arrow[r] & M_{n+1} \arrow[r, "d_{n+1}"] & M_n \arrow[r, "d_n"] & M_{n-1} \arrow[r] & \cdots,
        \end{tikzcd}
    \]
\emph{admits a} \(B\)-\emph{homotopy} if there are maps of \(B\)-modules \(h_n : M_n \to M_{n+1}\) satisfying
    
    \[
        d_{n+1} \circ h_n + h_{n-1} \circ d_n = 1_{M_n},   
    \]

\noindent for all indices. An \emph{\((A, B)\)-exact sequence} or \emph{relative exact sequence} is a complex of \(A\)-modules that admits a \(B\)-homotopy. This is equivalent to the following statements:

    \begin{itemize}
        \item there exists maps of \(B\)-modules \(h_n: M_n \to M_{n+1}\) such that \(d_{n+1} \circ h_n \circ d_{n+1} = d_{n+1}\), for all indices, and;
        \item each \(\ker(d_n)\) is a direct summand of \(M_n\) as a \(B\)-module.
    \end{itemize}

Clearly, when \(B\) is semisimple, every exact sequence admits a \(B\)-homotopy -- therefore any exact sequence is \((A,B)\)-exact in this case. On other hand, when \(B = A\), the \((A,A)\)-exact sequences are precisely the split exact sequences of \(A\)-modules. 

Given and \(A\) module \(M\), an \((A, B)\)-\emph{projective resolution} (or \emph{relative resolution}) of \(M\)  is an \((A, B)\)-exact sequence 
    \[
        \begin{tikzcd}
            \cdots \arrow[r] & P_{n+1} \arrow[r, "d_{n+1}"] & P_n \arrow[r] & \cdots \arrow[r] & P_0 \arrow[r, "d_0"] & M \arrow[r] & 0,
        \end{tikzcd}
    \]

\noindent such that each \(P_n\) is relatively projective, the presented definition is equivalent to the one in \cite[Page 7]{XX13}. It is known that every module admits a relative resolution, see \cite[Section 2]{Hoc56}. The \emph{deleted} version of the above relative resolution is the subcomplex replacing \(M\) with \(0\), such complex we will denote by \(P_{\bullet}\). The \((A, B)\)-\emph{projective dimension} or \emph{relative projective dimension} of \(M \in \catmod{A}\), denoted by \(\pd_{(A,B)} M\), is the least length of a deleted relative resolution for \(M\) or infinity. The \emph{relative global dimension} of the extension \(B \subseteq A\) is defined by
    \[
        \gldim(A, B) = \sup\{ \pd_{(A,B)} M \mid M \in \catmod{A} \}.
    \]

For \(M, N \in \catmod{A}\), the \emph{relative Ext-functors} are defined as
\[
\tn{Ext}^n_{(A,B)}(M, N) = H^n(\Hom_A(P_\bullet, N)),
\]
where \(P_\bullet\) is a deleted \((A,B)\)-projective resolution of \(M\). The \emph{relative Tor-functors} are defined analogously. For further details, see \cite[Section~2]{Hoc56}.

Just as in classical situation, it holds 
\begin{equation} \label{eq:PdAsTorZero}
  \pd_{(A, B)} M = \sup \{n:\tn{Tor}^{(A,B)}_n (-, M) \neq 0\}
\end{equation}
and
\begin{equation} \label{eq:GldimOpposite}
  \gldim(A, B) = \gldim(A^{\op}, B^{\op}).
\end{equation}
see \cite{IMP26} for more details. 

\medskip

\textbf{Conventions and Notations:} our objective is to compare relative homological dimensions to their classical counterparts, for that reason we use well known results from the literature that hold whenever the algebra is over a perfect field. Even though some of our results are valid for Artin algebras over an arbitrary commutative Artinian ring \(R\), we will always assume that the algebras are Artin algebras over a perfect field \(\Bbbk\) to have clearer statements. 
To simplify notation, especially in subscripts, we write \(\quotient{A}{B} \doteq A/AJ(B)A\) for an extension \(B \subseteq A\), and follow the usual convention \(\otimes \doteq \otimes_{\Bbbk}\).
\section{Controllable Extensions} \label{sec:ControllableExtensions}

For a Artin \(\Bbbk\)-algebra \(A\), computing \(\gldim (A)\) reduces to calculating \(\pd_A S\) for each simple \(A\)-module \(S\). Thus, \(\gldim (A)\) can be determined by examining only a finite amount of \(A\)-modules. In fact, it holds that
  \[
    \gldim A = \pd_A \frac{A}{J(A)}.
  \]
However, in the case of relative global dimension, we must in principle compute \(\pd_{(A,B)} M\) for all indecomposable \(A\)-modules \(M\). This naturally raises the following question: under which conditions on the extension \(B \subseteq A\) does the relative global dimension coincide with the usual global dimension of a suitable quotient of \(A\), in which the algebra \(B\) is, to some extend, ``ignored"? Such equality would allow to control the relative global dimension     using non-relative (co)homology and it motivates the following definition.

An extension \(B \subseteq A\) is said to be \emph{controllable} if
  \begin{equation} \label{def:ControllableExtensionsAndInequality}
      \gldim(A,B) = \gldim\left(\frac{A}{AJ(B)A}\right) = \gldim\left( \quotient{A}{B} \right).
  \end{equation}
As a weaker requirement, the extension $B \subseteq A$ is said to satisfy the \emph{controllable inequality} if
  \[
    \gldim(A,B) \geqslant  \gldim\left(\frac{A}{AJ(B)A}\right) = \gldim\left( \quotient{A}{B} \right).
  \]
Of course, the extensions \(\Bbbk \subseteq A\) and \(A \subseteq A\) are controllable. Moreover, notice that by equation \ref{eq:GldimOpposite}, if \(B \subseteq A\) is controllable, then so it is the \emph{opposite extension} \(B^{\op} \subseteq A^{\op}\).

\subsection{Basic Examples}

\subsubsection{Semisimple Extensions}

\begin{lemma}\label{lemma:IdealGenJBEqualsJA}
  Let \(B \subseteq A\) be an extension of Artinian algebras such that \(A/J(A) \cong B/J(B)\) as \(\enveloping{B}\)-modules. Then \(AJ(B)A = J(A)\) if and only if \(J(B)=J(A)\).
\end{lemma}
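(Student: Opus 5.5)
The plan is to prove the two implications separately. The direction ``\(J(B)=J(A)\) implies \(AJ(B)A = J(A)\)'' is immediate: \(J(A)\) is a two-sided ideal of \(A\), so \(AJ(A)A = J(A)\), and hence \(AJ(B)A = AJ(A)A = J(A)\). This direction does not use the hypothesis \(A/J(A)\cong B/J(B)\).

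For the converse, assume \(AJ(B)A = J(A)\). We first show \(J(B)\subseteq J(A)\), which in fact holds in general. Since \(J(B)\subseteq AJ(B)A = J(A)\), this inclusion is automatic here.

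Next we use the hypothesis to compare lengths. Consider the composite \(B \hookrightarrow A \to A/J(A)\). Its kernel is \(B\cap J(A)\), which contains \(J(B)\). So there is an induced \(B\)-bimodule map \(\varphi\colon B/J(B)\to A/J(A)\) whose image is \(B/(B\cap J(A))\).

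The key step is to show that \(\varphi\) is surjective and that \(B\cap J(A)=J(B)\). I plan to argue by lengths over \(\Bbbk\), using that Artin algebras over a field are finite-dimensional, or else \(R\)-lengths.
\begin{itemize}
\item The hypothesis gives \(\dim_\Bbbk A/J(A) = \dim_\Bbbk B/J(B)\).
\item Hence \(\dim_\Bbbk A - \dim_\Bbbk J(A) = \dim_\Bbbk B - \dim_\Bbbk J(B)\).
\end{itemize}
This alone does not force \(J(A)=J(B)\). The extra input is that \(B+J(A)=A\), i.e.\ that \(\varphi\) is onto. I expect this surjectivity to be the main obstacle.

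To get it, I would consider the image \(\bar B = (B+J(A))/J(A)\), a subalgebra of the semisimple algebra \(A/J(A)\) that is isomorphic to \(B/(B\cap J(A))\). Then \(\dim \bar B \le \dim B/J(B) = \dim A/J(A)\). If the hypothesis is read as saying that the natural map \(\varphi\) is an isomorphism, which is the intended meaning of ``\(A/J(A)\cong B/J(B)\) as \(\enveloping{B}\)-modules'' in this context, then surjectivity and injectivity are both immediate.

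If only an abstract bimodule isomorphism is given, I would argue differently. Since \(J(B)\subseteq J(A)\), the bimodule \(A/J(A)\) is annihilated on both sides by \(J(B)\), so it is a semisimple \(\enveloping{(B/J(B))}\)-module. The hypothesis \(\Bbbk\) perfect makes \(\enveloping{(B/J(B))}\) semisimple. The class \(\bar 1\in A/J(A)\) generates a sub-bimodule isomorphic to \(\bar B\), a quotient of \(B/J(B)\). Comparing multiplicities of the simple summand decompositions, I would then try to force \(\bar B=A/J(A)\) and the kernel \((B\cap J(A))/J(B)\) to vanish.

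Once \(\varphi\) is an isomorphism, the conclusion follows:
\begin{itemize}
\item \(A = B + J(A)\);
\item \(B\cap J(A) = J(B)\).
\end{itemize}
Then \(J(A) = AJ(B)A = (B+J(A))J(B)(B+J(A)) \subseteq J(B) + J(A)J(B)+J(B)J(A)+J(A)J(B)J(A)\). This gives \(J(A)\subseteq J(B)+J(A)^2\), since \(J(B)\subseteq J(A)\). As a left \(A\)-module, \(J(A)\) satisfies \(J(A) = J(B)A + J(A)\cdot J(A)\), so Nakayama's lemma gives \(J(A)=J(B)A\). Repeating on the other side and iterating with \(A=B+J(A)\) and nilpotence of \(J(A)\) yields \(J(A) = J(B) + J(A)^k\) for all \(k\), hence \(J(A)=J(B)\).
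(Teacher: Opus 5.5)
\textbf{Verdict: genuine but small gap.} Your skeleton is the paper's. The converse is immediate, and \(J(B)\subseteq AJ(B)A=J(A)\). Then \(A=B+J(A)\) turns \(J(A)=(B+J(A))J(B)(B+J(A))\) into \(J(A)=J(B)+J(A)^2\), and iterating plus nilpotence of \(J(A)\) finishes. The Nakayama detour is superfluous; as written it is a right-module statement, since \(J(B)A\) is a right ideal. The direct induction suffices: from \(J(A)=J(B)+J(A)^n\), substituting twice gives \(J(A)^2\subseteq J(A)J(B)+J(A)^{n+1}\subseteq J(B)+J(A)^{n+1}\).

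The gap is exactly the step you call the main obstacle, namely \(A=B+J(A)\). The statement only asserts an abstract isomorphism of \(\enveloping{B}\)-modules, and the paper later uses it in that form, e.g.\ \(A/J(A)\cong B/J(B)\cong\Bbbk\) for local algebras. So you cannot simply declare the natural map to be an isomorphism. For the abstract reading you only say you ``would try to force'' the result by comparing multiplicities, and that comparison cannot succeed on its own. Write \(A/J(A)\cong\overline{B}\oplus C\) and \(B/J(B)\cong\overline{B}\oplus (B\cap J(A))/J(B)\). Multiplicities then only yield \(C\cong (B\cap J(A))/J(B)\), which is consistent with both being nonzero.

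The missing idea is elementary. \(B\cap J(A)\) is a two-sided ideal of \(B\) with \((B\cap J(A))^N\subseteq J(A)^N=0\). So it is a nilpotent ideal of \(B\), hence contained in \(J(B)\), for every extension. Combined with \(J(B)\subseteq J(A)\), this gives \(B\cap J(A)=J(B)\). Therefore \((B+J(A))/J(A)\cong B/J(B)\) has the same \(\Bbbk\)-dimension (or length) as \(A/J(A)\), forcing \(B+J(A)=A\).

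So the dimension equality you wrote down does suffice once this inclusion is known. Neither the bimodule structure nor the perfectness of \(\Bbbk\) is needed. The paper itself states \(A=B+J(A)\) and \(B\cap J(A)=J(B)\) without justification, so you were right that something must be said here. With this observation inserted, your proof is complete.
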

\begin{proof}
Assume \(AJ(B)A=J(A)\), then it holds \(J(B) \subseteq J(A)\) that, together with the isomorphism \(A/J(A)\cong B/J(B)\), implies
\[ A=B+J(A), \qquad B\cap J(A)=J(B). \]
Then
\[ J(A)=(B+J(A))J(B)(B+J(A)) \subseteq J(B)+J(A)J(B)+J(B)J(A)+J(A)J(B)J(A). \]
Since \(J(B)\subseteq J(A)\), it follows that
\( J(A)\subseteq J(B)+J(A)^2. \)
The reverse inclusion being obvious, we obtain
\[ J(A)=J(B)+J(A)^2. \]
By induction, \(J(A)=J(B)+J(A)^n\)
for all \(n\ge2\). Since \(A\) is Artinian, \(J(A)\) is nilpotent, say \(J(A)^N=0\), and therefore
\[
J(A)=J(B)+J(A)^N=J(B).
\]
The converse is immediate.
\end{proof}

\begin{corollary}\label{cor:EqualRadicalExtensionSemisimple}
  Let \(B \subseteq A\) be a controllable extension of algebras such that \(A/J(A) \cong B/J(B)\) as \(\enveloping{B}\)-modules. Then \(\gldim(A, B)=0\) if and only if \(J(B)=J(A)\).
\end{corollary}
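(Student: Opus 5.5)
Because the extension is controllable, we have \(\gldim(A,B) = \gldim(\quotient{A}{B})\). The statement therefore reduces to showing that \(\gldim(A/AJ(B)A)=0\) if and only if \(J(B)=J(A)\), and the bridge to Lemma~\ref{lemma:IdealGenJBEqualsJA} will be the identity \(AJ(B)A=J(A)\).

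First, I check that \(J(B)\subseteq J(A)\) holds automatically. The isomorphism \(A/J(A)\cong B/J(B)\) of \(\enveloping{B}\)-modules shows that \(J(B)\) annihilates \(A/J(A)\) on the left. Hence \(J(B)A\subseteq J(A)\), and so \(J(B)\subseteq J(A)\) and \(AJ(B)A\subseteq J(A)\). Consequently \(\quotient{A}{B}\) surjects onto \(A/J(A)\), with kernel \(J(A)/AJ(B)A\). Since \(AJ(B)A\subseteq J(A)\), this kernel is exactly the Jacobson radical of \(\quotient{A}{B}\).

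For the reverse direction, assume \(J(B)=J(A)\). Then \(AJ(B)A=J(A)\), so \(\quotient{A}{B}=A/J(A)\) is semisimple and its global dimension is \(0\). Controllability then gives \(\gldim(A,B)=0\).

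For the forward direction, assume \(\gldim(A,B)=0\). Then \(\gldim(\quotient{A}{B})=0\), so the Artinian algebra \(\quotient{A}{B}\) is semisimple and its radical \(J(A)/AJ(B)A\) vanishes. Thus \(AJ(B)A=J(A)\), and Lemma~\ref{lemma:IdealGenJBEqualsJA} yields \(J(B)=J(A)\).

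The only delicate point is the inclusion \(J(B)\subseteq J(A)\), which is needed to identify the radical of the quotient. The argument above extracts it from the \(\enveloping{B}\)-isomorphism. If that phrasing seems too quick, one can instead use that \(J(B)\) acts nilpotently on \(A/J(A)\) and hence acts by zero, since \(A/J(A)\) is semisimple as a \(B\)-module via the isomorphism.
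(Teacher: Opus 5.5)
Your proof is correct and follows the paper's argument. The forward direction is the same: controllability, then Lemma~\ref{lemma:IdealGenJBEqualsJA}, and your derivation of \(AJ(B)A\subseteq J(A)\) from the \(\enveloping{B}\)-isomorphism supplies the step the paper leaves implicit when it passes from semisimplicity of \(\quotient{A}{B}\) to \(AJ(B)A=J(A)\). For the converse, the paper instead cites \cite[Lemma~2.11]{XX13}, which does not need controllability, while you use controllability and the semisimplicity of \(A/J(A)\); both arguments are valid here.
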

\begin{proof}
  If \(J(B) = J(A)\), then \(\gldim(A, B)=0\) by \cite[Lemma~2.11]{XX13}. Conversely, since the extension is controllable \(\gldim(\quotient{A}{B})=0\) and, therefore, \(AJ(B)A = J(A)\).
\end{proof}

\begin{rmk}
The study of relative global dimensions was initiated in the case \(\gldim(A,B)=0\), see \cite{Hir59, HS66, Gre75}, but a complete characterization of such extensions remains open. The above result shows that, within a sufficiently large class of extensions, controllability with \(\gldim(A,B)=0\) is characterized by \(J(B)=J(A)\).
\end{rmk}

\subsubsection{Trivial Extensions}\label{ex:TrivialExtensions}

Let \(B\) be a finite-dimensional \(\Bbbk\)-algebra and \(M\) a finitely generated \(B\)-bimodule. We write \(T[B,M]\) for the tensor algebra of \(M\) over \(B\) and \(B\ltimes M\) for the corresponding trivial extension. Both come equipped with a natural inclusion \(B\subseteq A\).
\sloppy When \(M\) is \emph{\(2\)-tensor-nilpotent}, meaning that \(M \otimes_{B} M = 0\), we have \(T[B,M] \cong B \ltimes M\). In this case, it is possible to compute relative dimensions for the trivial extension using the theory developed for tensor extensions. Recall, from \cite[Theorem~4.1.1]{Pri23}, that \(\gldim(T[B, M], B) \leqslant 1\) with equality holding if and only if \(M \neq 0\).

\begin{proposition}\label{prop:GlobalDimensionTrivialExtensions2Nilpotent}
  Let \(B\) be a finite-dimensional \(\Bbbk\)-algebra and let \(M \neq 0\) be a \(2\)-nilpotent \(B\)-bimodule. Then the extension \(B \subseteq A = B \ltimes M\) is controllable and 
    \[
      \gldim(A,B) = 1.
    \]        
\end{proposition}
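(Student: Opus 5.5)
Since \(M\) is \(2\)-tensor-nilpotent we have \(A = B \ltimes M \cong T[B,M]\), and \(M \neq 0\). By \cite[Theorem~4.1.1]{Pri23}, recalled above, this gives \(\gldim(A,B) = 1\). So the whole task is to prove controllability, namely
\[
  \gldim\left(\quotient{A}{B}\right) = 1 .
\]

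The first step is to compute the quotient \(\quotient{A}{B} = A/AJ(B)A\). In \(A = B \oplus M\) the multiplication is \((b,m)(b',m') = (bb', bm' + mb')\) and \(M^2 = 0\). Hence
\[
  AJ(B)A = J(B) \oplus \bigl(J(B)M + MJ(B)\bigr),
\]
and therefore
\[
  \quotient{A}{B} \cong \bar B \ltimes \bar M, \qquad \bar B = B/J(B), \qquad \bar M = M/(J(B)M + MJ(B)) = \bar B \otimes_B M \otimes_B \bar B .
\]
Two facts then describe \(\bar M\) and the quotient:
\begin{itemize}
  \item \(\bar M \neq 0\) by Nakayama's lemma applied to \(M\) as an \(\enveloping{B}\)-module, since \(J(B)M + MJ(B) = J(\enveloping{B})M\) up to the perfect-field hypothesis, and \(M \neq 0\).
  \item \(\bar M \otimes_{\bar B} \bar M\) is a quotient of \(M \otimes_B M = 0\), so it vanishes.
\end{itemize}
Consequently \(\quotient{A}{B} \cong T[\bar B, \bar M]\) is a tensor algebra over a semisimple algebra.

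The second step is to compute the global dimension of this tensor algebra. A tensor algebra \(T[S,N]\) with \(S\) semisimple is hereditary: every module \(X\) has the standard resolution
\[
  0 \to T \otimes_S N \otimes_S X \to T \otimes_S X \to X \to 0, \qquad T = T[S,N],
\]
and both terms are projective because every \(S\)-module is projective. This gives \(\gldim(T[S,N]) \leq 1\). Alternatively, since \(\Bbbk\) is perfect, \(\bar B\) is separable, so by \cite{XX13} the relative and absolute global dimensions agree, and the bound follows from \cite[Theorem~4.1.1]{Pri23} applied to \(\bar B \subseteq T[\bar B,\bar M]\). That theorem also gives equality when \(\bar M \neq 0\). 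For the lower bound directly, \(\bar M\) lies in the radical, so the algebra is not semisimple and \(\gldim \geq 1\). Altogether \(\gldim(\quotient{A}{B}) = 1 = \gldim(A,B)\).

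The step I expect to be the main obstacle is showing \(\bar M \neq 0\), i.e.\ \(J(B)M + MJ(B) \neq M\). Nakayama's lemma applies to \(M\) as a left \(B\)-module, giving \(J(B)M \neq M\), but the two-sided version needs care. I would argue by nilpotency: if \(M = J(B)M + MJ(B)\), then iterating gives
\[
  M = \sum_{i+j=n} J(B)^i\, M\, J(B)^j \quad \text{for all } n,
\]
and this is \(0\) once \(n \geq 2N\), where \(J(B)^N = 0\). This contradicts \(M \neq 0\). This argument avoids perfectness altogether.
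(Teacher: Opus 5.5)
Your proposal is correct and follows the paper's route. You quote \(\gldim(A,B)=1\) from \cite[Theorem~4.1.1]{Pri23}, identify \(\quotient{A}{B}\) with the tensor algebra \(T[B/J(B),\Top_{B^e}M]\) over a semisimple algebra, and conclude that it is hereditary and non-semisimple. Your nilpotency argument for \(\bar M\neq 0\), the surjection \(M\otimes_B M\to \bar M\otimes_{\bar B}\bar M\), and the standard resolution for hereditarity supply details the paper leaves implicit; the paper gets \(\bar M \neq 0\) from Nakayama over \(B^e\).
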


\begin{proof}
Since \(\gldim(A, B)=1\), it suffices to prove that
\( \gldim(\quotient{A}{B})=1.\)
Notice
\[AJ(B)A=J(B)\oplus J_{B^e}(M),\]
we obtain
\[
\quotient{A}{B} \cong \frac{B}{J(B)} \ltimes \Top_{B^e}M\cong T\!\left[\frac{B}{J(B)},\,\Top_{B^e}M\right],
\]
where the latter isomorphism follows from
\[
\Top_{B^e}M\otimes_{B/J(B)}\Top_{B^e}M=0.
\]
Since \(B/J(B)\) is semisimple, the algebra on the right is hereditary and non-semisimple.
\end{proof}

Trivial extensions by \(2\)-nilpotent modules appear in \cite[Theorem~A]{GPS21}, where the authors study reduction procedures for algebras in such way that they preserve the finiteness of the finitistic dimension. One such technique, known as \emph{arrow removal}, consists in constructing a trivial extension \(\Lambda = \Gamma \ltimes P\), where \(P\) is a projective \(\Gamma^{e}\)-module satisfying \(P \otimes_{\Gamma} P = 0\). Hence, all such extensions are controllable. This is not the first appearance of arrow removal in the relative setting: the authors of \cite{IM25} generalized \cite[Theorem~A]{GPS21}, see \cite[Remark~6.13]{IM25}.

\begin{example}
  Below we provide an example of a trivial extension by a $2$-nilpotent module $M$ that is not $B^{e}$-projective -- that is, an example not satisfying the hypotheses of \cite[Theorem~A]{GPS21}. Let
    \[
      \begin{tikzcd}
        Q: 1 \arrow[r, "\alpha"] & 2 \arrow[r, "\beta", shift left] \arrow[r, "\gamma"', shift right] & 3,
      \end{tikzcd}
    \]
  and consider \(B = \Bbbk Q\). Let \(M \in \catmod{B^{e}}\) be the indecomposable module of dimension two supported on the vertices \(e_{2} \otimes e_{1}\) and \(e_{3} \otimes e_{1}\), such that \(\beta \otimes e_{1} \colon (e_{2} \otimes e_{1})M \to (e_{3} \otimes e_{1})M\) is the zero map, while \(\gamma \otimes e_{1} \colon (e_{2} \otimes e_{1})M \to (e_{3} \otimes e_{1})M\) is bijective. Then \(M\) is not a \(B^{e}\)-projective module and satisfies \(M \otimes_{B} M = 0\).
  Consequently, the extension \(B \subseteq B \ltimes M\) is controllable.
\end{example}

\begin{rmk}
  Another application of trivial extensions by \( 2 \)-nilpotent modules arises from triangular algebras. Let \( C \) and \( D \) be finite-dimensional algebras and \( M \) be a \( (D,C) \)-bimodule. Set \( B = C \oplus D \), consider the triangular algebra
    \[
      A =
        \begin{pmatrix}
          C & 0 \\
          M & D
        \end{pmatrix}
      \enspace \textnormal{and denote} \enspace
      \widehat{M} =
        \begin{pmatrix}
          0 & 0 \\
          M & 0
        \end{pmatrix}.
    \]
  Then \( A \cong B \ltimes \widehat{M} \) and \( \widehat{M} \) is a \( 2 \)-nilpotent \( B \)-bimodule. Consequently, the extension \( B \subseteq A \) is controllable.
\end{rmk}
    
\subsection{Sufficient Condition for Controllable Inequality} \label{subsec:CategoricalControllableInequallity}

This subsection focus on a set of sufficient conditions to obtain the controllable inequality. Recall that for an ideal $I \lhd A$, the canonical projection $A \to A/I$ induces a functor
  \[
    F: \catmod{A/I} \to \catmod{A}.
  \]
When \(I=AJ(B)A\), there is a transfer of homological properties from \(\quotient{A}{B}\) to their \((A, B)\)-relative counterparts.

\begin{lemma}\label{lemma:ControllableABExact}
  If \(f \colon M \to N\) is a surjective homomorphism of \(\quotient{A}{B}\)-modules, then, when regarded as a homomorphism of \(A\)-modules, it admits a \(B\)-linear section.
\end{lemma}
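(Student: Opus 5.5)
The idea is that modules over \(\quotient{A}{B}\) are annihilated by \(AJ(B)A\), and in particular by \(J(B)\). So, restricted to \(B\), every \(\quotient{A}{B}\)-module is a module over the semisimple algebra \(B/J(B)\). Once this is in place, the splitting comes for free, because every epimorphism of modules over a semisimple algebra splits.

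First we record the restriction. Let \(M\) be a \(\quotient{A}{B}\)-module, viewed as an \(A\)-module through the projection \(A \to A/AJ(B)A\). Then \(J(B)M \subseteq AJ(B)AM = 0\). Hence the restriction of \(M\) to \(B\) factors through \(B \to B/J(B)\), and \(M\) is naturally a \(B/J(B)\)-module. The same holds for \(N\). The map \(f\) is \(A\)-linear, so it is in particular \(B\)-linear, and therefore it is a homomorphism of \(B/J(B)\)-modules.

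Next we invoke semisimplicity. Since \(B\) is an Artin algebra, \(B/J(B)\) is semisimple. Every \(B/J(B)\)-module is therefore semisimple, and every short exact sequence \(0 \to \ker f \to M \to N \to 0\) of \(B/J(B)\)-modules splits. This gives a \(B/J(B)\)-linear map \(s \colon N \to M\) with \(f \circ s = 1_N\).

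Finally, \(s\) is \(B\)-linear, because the \(B\)-action on \(M\) and on \(N\) is the one pulled back from \(B/J(B)\). So \(s\) is the required \(B\)-linear section of \(f\) regarded as a homomorphism of \(A\)-modules.

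The main point to check is the inclusion \(J(B) \subseteq AJ(B)A\), which is immediate since \(1 \in A\). Nothing here uses that the field is perfect; that hypothesis will presumably matter only later in the paper.
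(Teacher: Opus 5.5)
Your proof is correct and takes the same approach as the paper. The paper's one-line argument is that every \(\quotient{A}{B}\)-module is semisimple as a \(B\)-module, so any epimorphism between such modules splits in \(\catmod{B}\). You have spelled out the underlying details: annihilation by \(J(B)\subseteq AJ(B)A\), and the passage through the semisimple algebra \(B/J(B)\).
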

    
\begin{proof}
  Every \(\quotient{A}{B}\)-module is \(B\)-semisimple, hence any epimorphism between them admits a section in \(\catmod{B}\).
\end{proof}

\begin{rmk}\label{rmk:QuocientCondition}
  The following results will assume that \({}_A \quotient{A}{B} \in \mathcal{P}(A,B)\). Note that if \(AJ(B) \lhd A\), then \(A \otimes_B (B/J(B)) \cong \quotient{A}{B}\) is \((A, B)\)-projective, since relative projective modules are direct summands of induced modules. Therefore the results will hold if \(AJ(B) \lhd A\), and this hypothesis will be used in later results.
\end{rmk}

\begin{theorem}\label{teo:HomologicalConditionControllableInequality}        
  Let \(B \subseteq A\) be an extension of \(\Bbbk\)-algebras such that \({}_A \quotient{A}{B} \in \mathcal{P}(A,B)\). Then there are natural isomorphisms
    \[
      \tn{Ext}^n_{(A,B)}(M,N) \cong \Ext_{\quotient{A}{B}}^n(M,N)
        \enspace \textnormal{and} \enspace
      \tn{Tor}^{(A,B)}_n(L,M) \cong \torsion^{\quotient{A}{B}}_n(L,M),
    \]
  for all \(M, N \in \catmod{\quotient{A}{B}}\), \(L \in \catmod{(\quotient{A}{B})^{\op}}\), and \(n \in \mathbb{N}\).
\end{theorem}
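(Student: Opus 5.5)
The strategy is to show that, for \(M \in \catmod{\quotient{A}{B}}\), an ordinary projective resolution of \(M\) over \(\quotient{A}{B}\), viewed as a complex of \(A\)-modules through the functor \(F\), is already an \((A,B)\)-projective resolution of \(M\). Once this holds, both isomorphisms are immediate from the definitions. For \(\Ext\) we have \(\Hom_A(P_\bullet,N)=\Hom_{\quotient{A}{B}}(P_\bullet,N)\), because \(A\)-linear maps between \(\quotient{A}{B}\)-modules are exactly the \(\quotient{A}{B}\)-linear ones. For \(\Tor\) we have \(L\otimes_A P_\bullet = L\otimes_{\quotient{A}{B}} P_\bullet\), since \(AJ(B)A\) acts by zero on both \(L\) and \(P_\bullet\). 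Taking (co)homology then gives the claimed identifications.

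\textbf{Step 1: relative exactness.} Let \(\cdots\to P_1\to P_0\to M\to 0\) be a projective resolution over \(\quotient{A}{B}\). Each differential, corestricted onto its image, is a surjection of \(\quotient{A}{B}\)-modules. By Lemma~\ref{lemma:ControllableABExact}, every such surjection splits \(B\)-linearly. Equivalently, all \(\quotient{A}{B}\)-modules are semisimple as \(B\)-modules, since \(J(B)\) annihilates them. Hence every kernel \(\ker d_n\) is a \(B\)-direct summand of \(P_n\), and the augmented complex admits a \(B\)-homotopy, i.e.\ it is \((A,B)\)-exact.

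\textbf{Step 2: relative projectivity of the terms.} Each \(P_n\) is a direct summand of a finite direct sum of copies of \(\quotient{A}{B}\). The hypothesis gives \({}_A\quotient{A}{B}\in\mathcal{P}(A,B)\), and \(\mathcal{P}(A,B)\) is closed under finite direct sums and direct summands; this is clear from characterization (ii), since induction \(A\otimes_B-\) commutes with finite direct sums. So each \(P_n\) is \((A,B)\)-projective, and \(P_\bullet\) is a deleted \((A,B)\)-projective resolution of \(M\). The computation in the first paragraph then yields the isomorphisms.

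\textbf{Step 3: naturality and the Tor side.} Relative Ext and Tor are independent of the chosen relative resolution, up to canonical isomorphism. A morphism \(M\to M'\) of \(\quotient{A}{B}\)-modules lifts to a chain map between the chosen \(\quotient{A}{B}\)-projective resolutions, and this lift also serves as a lift between the relative resolutions. Hence the isomorphisms are natural in \(M\), and they are clearly natural in \(N\) and in \(L\).

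The main point needing care is the Tor statement: relative Tor may be computed using a relative resolution in either variable, and we must know that resolving the second variable \(M\) is legitimate. This is consistent with equation~\ref{eq:PdAsTorZero}, which already uses relative Tor as a functor of the second variable. If a one-sided convention is required instead, I would resolve \(L\) over \((\quotient{A}{B})^{\op}\) and repeat Steps 1–2 for the opposite extension. This uses \(\quotient{A^{\op}}{B^{\op}}=(\quotient{A}{B})^{\op}\) together with a right-module version of the hypothesis, and in the case \(AJ(B)\lhd A\) of Remark~\ref{rmk:QuocientCondition} that version would need to be checked separately. I expect this one-sidedness bookkeeping to be the only real obstacle.
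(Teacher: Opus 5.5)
Your proposal is correct and follows the paper's proof. You show that an ordinary \(\quotient{A}{B}\)-projective resolution of \(M\) is already an \((A,B)\)-projective resolution: relative exactness comes from the \(B\)-semisimplicity of \(\quotient{A}{B}\)-modules, and relative projectivity from \(\operatorname{add}(\quotient{A}{B})\subseteq\mathcal{P}(A,B)\). You then identify \(\Hom_A\) with \(\Hom_{\quotient{A}{B}}\) and \(\otimes_A\) with \(\otimes_{\quotient{A}{B}}\) on the resulting complexes, exactly as the paper does, and the paper likewise computes relative Tor by resolving the second variable \(M\), so your one-sidedness concern does not arise.
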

    
\begin{proof}
  Let \(M\) be a \(\quotient{A}{B}\)-module, and consider a \(\quotient{A}{B}\)-projective resolution
    \[
      \begin{tikzcd}
        \cdots \arrow[r] & P_2 \arrow[r, "d_2"] & P_1 \arrow[r, "d_1"] & P_0 \arrow[r, "d_0"] & M \arrow[r] & 0.
      \end{tikzcd}
    \]
  Since \(\operatorname{add}_{A}(\quotient{A}{B}) = \operatorname{add}_{\quotient{A}{B}}(\quotient{A}{B}) \subseteq \mathcal{P}(A,B)\),
  Lemma~\ref{lemma:ControllableABExact} implies that the above exact sequence is also a relative \((A,B)\)-projective resolution. Consequently, it can be used to compute both \(\quotient{A}{B}\)-homological groups and \((A,B)\)-relative homological groups via the usual (co)homological procedures. The resulting complexes are isomorphic, due to the natural isomorphisms
    \[
      \Hom_{\quotient{A}{B}}(X,Y) \cong \Hom_{A}(X,Y)
        \enspace \text{and} \enspace
      Z \otimes_{\quotient{A}{B}} X \cong Z \otimes_{A} X,
    \]
  where \(X,Y \in \catmod{\quotient{A}{B}}\) and \(Z \in \catmod{(\quotient{A}{B})^{\op}}\).
\end{proof}
    
\begin{corollary}\label{cor:CategoricalControllableInequality}
  Let \(B\subseteq A\) be an extension of \(\Bbbk\)-algebras such that \(_{A}\quotient{A}{B} \in \mathcal{P}(A,B)\). Then:
    \begin{enumerate}
      \item \(\pd_{\quotient{A}{B}}M=\pd_{(A,B)}M\), for any \( \quotient{A}{B} \)-module \(M\);
      \item \(\gldim\left(\quotient{A}{B}\right)\leqslant \gldim(A,B)\), and;
      \item If \(\gldim\left(\quotient{A}{B}\right)\) is infinite, then \(B\subseteq A\) is controllable. 
    \end{enumerate}
\end{corollary}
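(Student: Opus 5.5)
The three items follow from Theorem~\ref{teo:HomologicalConditionControllableInequality}, combined with the Tor characterization of projective dimension. For item (1), I would use the proof of Theorem~\ref{teo:HomologicalConditionControllableInequality} directly. Take a minimal (or any shortest) \(\quotient{A}{B}\)-projective resolution of a \(\quotient{A}{B}\)-module \(M\). That proof shows it is also an \((A,B)\)-projective resolution, since its terms lie in \(\operatorname{add}(\quotient{A}{B})\subseteq\mathcal{P}(A,B)\) and it is \(B\)-split by Lemma~\ref{lemma:ControllableABExact}. Hence \(\pd_{(A,B)}M\leqslant\pd_{\quotient{A}{B}}M\).

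For the reverse inequality I would use equation~\eqref{eq:PdAsTorZero} together with its classical analogue for the Artin algebra \(\quotient{A}{B}\). Classically, if \(\pd_{\quotient{A}{B}}M=n<\infty\), then \(\torsion^{\quotient{A}{B}}_n(L,M)\neq 0\) for some \(L\); one can take \(L\) to be the top of \((\quotient{A}{B})^{\op}\). If \(\pd_{\quotient{A}{B}}M=\infty\), such nonvanishing occurs for arbitrarily large \(n\). Theorem~\ref{teo:HomologicalConditionControllableInequality} gives \(\tn{Tor}^{(A,B)}_n(L,M)\cong\torsion^{\quotient{A}{B}}_n(L,M)\) for these right \(\quotient{A}{B}\)-modules \(L\). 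So the relative Tor is nonzero in the same degrees, and \eqref{eq:PdAsTorZero} yields \(\pd_{(A,B)}M\geqslant \pd_{\quotient{A}{B}}M\). The only subtle point is that \eqref{eq:PdAsTorZero} takes a supremum over all right \(A\)-modules, while we only test right \(\quotient{A}{B}\)-modules. This is harmless for the lower bound. The upper bound already came from the explicit resolution, so no difficulty arises there.

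For item (2), I would take the supremum of item (1) over all \(M\in\catmod{\quotient{A}{B}}\). The inclusion \(\catmod{\quotient{A}{B}}\subseteq\catmod{A}\) then gives
\[
\gldim(\quotient{A}{B})=\sup_{M}\pd_{\quotient{A}{B}}M=\sup_M\pd_{(A,B)}M\leqslant\gldim(A,B).
\]
Item (3) is then immediate: if \(\gldim(\quotient{A}{B})=\infty\), the inequality of item (2) forces \(\gldim(A,B)=\infty\). Equality \eqref{def:ControllableExtensionsAndInequality} holds, so the extension is controllable.

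The main step needing care is the reverse inequality in item (1), namely ensuring that nonvanishing of the \(\quotient{A}{B}\)-Tor transfers to nonvanishing of the relative Tor. This is handled by the natural isomorphism of Theorem~\ref{teo:HomologicalConditionControllableInequality}. Alternatively, one can argue with Ext instead: a relative resolution of length \(<n\) would force \(\tn{Ext}^n_{(A,B)}(M,-)=0\), hence \(\Ext^n_{\quotient{A}{B}}(M,N)=0\) for all \(N\in\catmod{\quotient{A}{B}}\), which gives \(\pd_{\quotient{A}{B}}M<n\).
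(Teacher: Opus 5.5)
Your proposal is correct and follows essentially the same route as the paper. In both, the upper bound in (1) comes from regarding a \(\quotient{A}{B}\)-projective resolution as an \((A,B)\)-projective resolution, and the lower bound comes from transferring nonvanishing Tor through the isomorphism of Theorem~\ref{teo:HomologicalConditionControllableInequality} together with \eqref{eq:PdAsTorZero}; items (2) and (3) then follow immediately, which the paper simply asserts and you spell out.
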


\begin{proof}
  It suffices to prove the first assertion, for this fix \(M \in \catmod{\quotient{A}{B}}\). Since every \(\quotient{A}{B}\)-projective resolution is also an \((A,B)\)-projective resolution, we have
    \begin{align*}
      \pd_{(A, B)} M &= \min\{\ell (P_{\bullet}) \ : \ P_{\bullet} \ \textnormal{is an} \ (A, B)\textnormal{-projective resolution}\} \\
        & \leqslant \min\{\ell (P_{\bullet}) \ : \ P_{\bullet} \ \textnormal{is an} \ \quotient{A}{B}\textnormal{-projective resolution}\} = \pd_{\quotient{A}{B}} M
    \end{align*}
  For the other inequality, observe that
    \begin{align*}
      \pd_{\quotient{A}{B}} M &= \sup\{n: \torsion^{\quotient{A}{B}}_n(-, M) \neq 0 \} \\
        & \leqslant \sup\{n: \torsion^{(A, B)}_n(-, M) \neq 0\} \leqslant \pd_{(A, B)} M.
    \end{align*}
\end{proof}

\subsection{Applications}

\subsubsection{Same Radical Quotient}

Let \(B \subseteq A\) be an extension such that \( A/J(A) \cong B/J(B) \) as \(\enveloping{B}\)-modules and \(AJ(B) \lhd A\). Recall that \(\quotient{A}{B} \doteq A/AJ(B)A\) is always an algebra, so we can impose to it usual qualities of said objects.

\begin{proposition}\label{prop:BinaryClassicalGldim}
  Suppose that \(\gldim\left( \quotient{A}{B} \right) \in \{0, \infty\}\). Then \(B \subseteq A\) is controllable.
\end{proposition}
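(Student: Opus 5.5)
The plan is to split according to the two possible values of \(\gldim(\quotient{A}{B})\). The standing hypotheses of this subsection are \(A/J(A)\cong B/J(B)\) as \(\enveloping{B}\)-modules and \(AJ(B)\lhd A\). By Remark~\ref{rmk:QuocientCondition}, the second hypothesis gives \({}_A\quotient{A}{B}\in\mathcal{P}(A,B)\), so Corollary~\ref{cor:CategoricalControllableInequality} is available.

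\emph{Case \(\gldim(\quotient{A}{B})=\infty\).} This is exactly item (3) of Corollary~\ref{cor:CategoricalControllableInequality}. Item (2) gives \(\infty=\gldim(\quotient{A}{B})\leqslant\gldim(A,B)\), so both sides are infinite and the extension is controllable. Nothing more is needed here.

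\emph{Case \(\gldim(\quotient{A}{B})=0\).} Now \(\quotient{A}{B}=A/AJ(B)A\) is semisimple.
\begin{enumerate}
\item Since \(J(B)\) is nilpotent, \(AJ(B)A\) is a nilpotent ideal of \(A\): the product of \(N\) copies of \(AJ(B)A\) lies in \(AJ(B)^{N}A\) once \(AJ(B)=AJ(B)A\). Hence \(AJ(B)A\subseteq J(A)\).
\item Semisimplicity of the quotient gives the reverse inclusion, so \(AJ(B)A=J(A)\).
\item Lemma~\ref{lemma:IdealGenJBEqualsJA}, which uses the isomorphism \(A/J(A)\cong B/J(B)\), then yields \(J(B)=J(A)\).
\item By \cite[Lemma~2.11]{XX13}, as invoked in the proof of Corollary~\ref{cor:EqualRadicalExtensionSemisimple}, \(\gldim(A,B)=0=\gldim(\quotient{A}{B})\).
\end{enumerate}

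The only step needing care is the nilpotence claim in step 1. It uses \(AJ(B)\lhd A\) in the form \(AJ(B)A=AJ(B)\), so that \((AJ(B))^N\subseteq AJ(B)^N=0\). Even without that hypothesis, the ideal generated by a nilpotent subset need not be nilpotent in general. However, every element \(x\) of \(J(B)\) satisfies \(1+bx\) invertible in \(B\subseteq A\) for all \(b\in B\), and over an Artinian algebra one can instead argue that \(J(B)\) acts nilpotently on each simple \(A\)-module. Using the left ideal hypothesis is the cleanest route, so I will rely on it.
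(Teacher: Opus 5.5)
Your proof is correct and follows the paper's route. The infinite case is item (3) of Corollary~\ref{cor:CategoricalControllableInequality}, and the semisimple case is the chain \(AJ(B)A=J(A)\Rightarrow J(B)=J(A)\Rightarrow\gldim(A,B)=0\) via Lemma~\ref{lemma:IdealGenJBEqualsJA} and \cite[Lemma~2.11]{XX13}, which the paper compresses into a citation of Corollary~\ref{cor:EqualRadicalExtensionSemisimple}; your version is more careful, since it justifies the inclusion \(AJ(B)A\subseteq J(A)\) that the paper leaves implicit, and your closing aside about \(J(B)\) acting nilpotently on simple \(A\)-modules, while not sufficient for that inclusion, is unused (the \(\enveloping{B}\)-isomorphism \(A/J(A)\cong B/J(B)\) also gives \(J(B)A\subseteq J(A)\) directly).
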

\begin{proof}
  It follows directly from Corollaries \ref{cor:EqualRadicalExtensionSemisimple} and \ref{cor:CategoricalControllableInequality}.
\end{proof}

\begin{corollary}
  The extension \(B \subseteq A\) is controllable whenever one of the following conditions holds:
  \begin{enumerate}
    \item[(i)] Both \(A\) and \(B\) are local algebras, that is, \(A/J(A)\cong B/J(B)\cong \Bbbk.\)
    \item[(ii)] The algebra \(\quotient{A}{B}\) is self-injective in the usual sense.
    \item[(iii)] Relative projective and relative injective modules coincide.
  \end{enumerate}
\end{corollary}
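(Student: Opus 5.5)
Each of the three conditions will be reduced to Proposition~\ref{prop:BinaryClassicalGldim}: under the standing hypotheses of this subsection (\(A/J(A)\cong B/J(B)\) as \(\enveloping{B}\)-modules and \(AJ(B)\lhd A\)), it suffices to show that \(\gldim(\quotient{A}{B})\in\{0,\infty\}\). The basic tool is the classical fact that an Artin algebra of finite global dimension with a nonzero radical cannot be self-injective. Indeed, if \(\Lambda\) is self-injective with \(\gldim\Lambda=d<\infty\), pick a module \(M\) with \(\pd_\Lambda M=d\). Then \(\Ext^d_\Lambda(M,\Lambda)\neq 0\), since a minimal projective resolution of length \(d\) yields a nonzero \(\Ext^d\) into some projective, hence into \(\Lambda\). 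But \(\Lambda\) is injective, so \(d=0\) and \(\Lambda\) is semisimple. Applied to \(\Lambda=\quotient{A}{B}\), this shows that self-injectivity forces \(\gldim(\quotient{A}{B})\in\{0,\infty\}\).

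For (ii), the statement then follows at once from the dichotomy and Proposition~\ref{prop:BinaryClassicalGldim}.

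For (i), \(A\) local implies \(\quotient{A}{B}\) is local; it is nonzero because \(J(B)\subseteq J(A)\) forces \(AJ(B)A\subseteq J(A)\). For a local Artin algebra \(\Lambda\) with unique simple module \(S\), the first syzygy \(J(\Lambda)\) is nonzero whenever \(\Lambda\) is non-semisimple. Every module then has a minimal projective resolution built from copies of \(\Lambda\), so a finite projective dimension of \(S\) would give an exact sequence \(0\to \Lambda^{n_d}\to\cdots\to\Lambda^{n_0}\to S\to 0\). I would rule this out by the standard argument that a local algebra of finite global dimension is semisimple (Auslander–Buchsbaum for Artinian local rings, or equivalently the vanishing of \(\Tor_d(S,S)\) at the last step of a minimal resolution, which is impossible since the last differential has image in \(J\Lambda^{n_{d-1}}\) while \(\Lambda^{n_d}\) must have nonzero socle mapped injectively into it). Hence \(\gldim\in\{0,\infty\}\), and Proposition~\ref{prop:BinaryClassicalGldim} applies.

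For (iii), I would use Theorem~\ref{teo:HomologicalConditionControllableInequality}, valid here by Remark~\ref{rmk:QuocientCondition}. Since \({}_A\quotient{A}{B}\) is relatively projective, it is relatively injective by hypothesis, so \(\Ext^n_{(A,B)}(-,\quotient{A}{B})=0\) for \(n\geq1\). Consequently \(\Ext^n_{\quotient{A}{B}}(M,\quotient{A}{B})=0\) for all \(\quotient{A}{B}\)-modules \(M\) and \(n\geq 1\), so \(\quotient{A}{B}\) is injective over itself, and the argument for (ii) finishes the proof. The main obstacle is precisely this step: checking that relative injectivity of \(\quotient{A}{B}\) yields vanishing of \(\Ext^1_{(A,B)}\) in the module category. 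By the definition of relative Ext, this reduces to the dual of the standard relative-projective characterization, which I expect to be routine.
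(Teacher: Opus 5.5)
Your proposal is correct and follows the paper's proof. In every case you reduce, via Proposition~\ref{prop:BinaryClassicalGldim}, to showing \(\gldim(\quotient{A}{B})\in\{0,\infty\}\), and you reduce (iii) to (ii) through the relative injectivity of \(\quotient{A}{B}\). The only differences are these: you prove the classical facts the paper cites (local or self-injective Artin algebras of finite global dimension are semisimple), and in (iii) you go through the Ext comparison of Theorem~\ref{teo:HomologicalConditionControllableInequality} instead of directly splitting short exact sequences of \(\quotient{A}{B}\)-modules, which rests on the same \(B\)-semisimplicity.

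One precision for (i): take a nonzero \(s\in\Lambda\) with \(sJ(\Lambda)=0\), i.e. from the socle of \(\Lambda\) as a right module. The last differential is right multiplication by a matrix with entries in \(J(\Lambda)\), so it sends \(s e_1\) to zero, contradicting injectivity.
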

\begin{proof}
  By Proposition \ref{prop:BinaryClassicalGldim}, it suffices to show that
  \(\gldim(\quotient{A}{B})\in\{0,\infty\}\) in each case.
  \begin{enumerate}
    \item[(i)] This follows from \cite[Propositions~14 and~15(a)]{Aus55}.

    \item[(ii)] Since \(\quotient{A}{B}\) is self-injective, its global dimension is either \(0\) or \(\infty\).

    \item[(iii)] Since \(AJ(B)\lhd A\), the \(A\)-module \(\quotient{A}{B}\) is \((A,B)\)-projective, and hence is relatively injective by hypothesis. Moreover, every \(\quotient{A}{B}\)-module is \(B\)-semisimple when viewed as an \(A\)-module. Therefore every short exact sequence of \(\quotient{A}{B}\)-modules is \((A,B)\)-split, and it follows that the regular module \(\quotient{A}{B}\) is injective over itself. Thus \(\quotient{A}{B}\) is self-injective, and the claim follows from item~(ii).\end{enumerate}\end{proof}

\begin{example}
  The following examples illustrate the above corollary:
  \begin{enumerate}
    \item The extension
      \[
        B=\Bbbk[z]/\langle z^2\rangle
        \subseteq
        A=B[x,y]/\langle x^2,y^2,xy\rangle
      \]
      is local, commutative, and therefore controllable; neither \(A\) nor \(\quotient{A}{B}\) are self-injective.

    \item Consider the quiver \(Q\)
      \[
        \begin{tikzcd}
          {} & {1} \arrow[ld, "a"'] & {} \\
          {2} \arrow[rr, "b"'] & {} & {3} \arrow[lu, "c"']
        \end{tikzcd}
      \]
      and let \(A=\Bbbk Q/I\), where \(I=\langle ba,cb\rangle\). If \(B\) denotes the subalgebra of \(A\) generated by \(e_1,e_2,e_3\) and \(ac\), then \(\quotient{A}{B}\) is self-injective and hence the extension is controllable. This example first appeared in \cite[Example~3.5]{IM25}; it now follows directly from the above corollary.

    \item Frobenius extensions, by \cite[Corollary~8]{Hir59}, have equal classes of relative injective and relative projective modules, therefore if \(AJ(B) \lhd A\) and \(A/J(A) \cong B/J(B)\) as \(\enveloping{B}\)-modules, then they are controllable.
  \end{enumerate}
\end{example}

\begin{rmk}
  The underlying condition \(A/J(A) \cong B/J(B)\) is necessary for the above results, as shown by the examples of Section \ref{sec:NonControllableExtensions}.
\end{rmk}

\subsubsection{Combinatorial constructions}

In \cite[Theorem 3.2]{IM25}, the authors obtained an upper bound for certain extensions of algebras in terms of their associated quivers. More precisely, let \(Q\) be a quiver and let \(Q_0=V_1\sqcup V_2\sqcup\cdots\sqcup V_n\)
be a partition of its vertex set such that
  \begin{enumerate}
    \item if \(i<j\), then there are no arrows from a vertex in \(V_i\) to a vertex in \(V_j\),
    \item for each \(i\), there are no arrows between distinct vertices in \(V_i\).
  \end{enumerate}
Let \(A = \Bbbk Q/I\), with \(I\) any admissible ideal, and consider a subalgebra \(B = \Bbbk R/J\), with \(J\) admissible, such that
  \begin{enumerate}
    \item each idempotent associated with a vertex of \(R_0\) is a sum of idempotents associated to vertices of \(Q_0\),
    \item the arrows \(\beta \in R_1\) are linear combinations of paths in \(Q\) such that for each vertex \(e \in Q_0\), there are vertices \(h, g \in Q_0\) with \(\beta e = h \beta e\) and \(e \beta = e \beta g\),
    \item for any vertex \(e \in Q_0\) and loop \(\gamma \in Q_1\) at \(e\), there is \(\beta \in B\) such that \(\gamma = e\beta\).
  \end{enumerate}
Then the relative global dimension satisfies \(\gldim(A, B) \leqslant n-1\).

\begin{corollary}\label{cor:ProjBoundedAndAlgebraicCondition}
  Let \(B \subseteq A\) be an extension satisfying the above conditions such that \(AJ(B) \lhd A\). Then
    \[
      \gldim\left( \quotient{A}{B} \right) \leqslant \gldim(A, B) \leqslant n-1.
    \]
  In particular, if \(\gldim(\quotient{A}{B}) = n-1\), then \(B \subseteq A\) is controllable.
\end{corollary}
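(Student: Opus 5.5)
The corollary follows by combining two earlier results, so the plan is simply to check that their hypotheses hold.

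For the lower bound \(\gldim(\quotient{A}{B}) \leqslant \gldim(A,B)\), I first use Remark~\ref{rmk:QuocientCondition}. Since \(AJ(B)\lhd A\), we have \(A\otimes_B (B/J(B)) \cong A/AJ(B) = A/AJ(B)A = \quotient{A}{B}\) as left \(A\)-modules. The first isomorphism is right exactness of \(A\otimes_B -\) applied to \(0\to J(B)\to B\to B/J(B)\to 0\). The second uses that \(AJ(B)\) is already a two-sided ideal, so \(AJ(B)A = AJ(B)\). By characterization (ii) of relative projectivity, \({}_A\quotient{A}{B}\) is then an induced module and hence lies in \(\mathcal{P}(A,B)\). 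This is exactly the hypothesis of Corollary~\ref{cor:CategoricalControllableInequality}, and item 2 of that corollary gives the lower bound.

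For the upper bound \(\gldim(A,B)\leqslant n-1\), I invoke \cite[Theorem 3.2]{IM25}. Its combinatorial hypotheses (the partition \(Q_0=V_1\sqcup\cdots\sqcup V_n\), admissibility of \(I\) and \(J\), and the conditions on idempotents, arrows and loops) are precisely the standing assumptions of the corollary, as recalled just above it. Chaining the two inequalities gives the displayed estimate.

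For the final claim, if \(\gldim(\quotient{A}{B})=n-1\), the chain becomes \(n-1\leqslant\gldim(A,B)\leqslant n-1\). This forces \(\gldim(A,B)=\gldim(\quotient{A}{B})\), which is the definition of controllability in \eqref{def:ControllableExtensionsAndInequality}.

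There is no real obstacle. The only point needing a line of justification is the identification \(A\otimes_B B/J(B)\cong \quotient{A}{B}\) together with the equality \(AJ(B)A=AJ(B)\) under the ideal hypothesis, and I would write that out explicitly.
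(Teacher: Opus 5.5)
Your proposal is correct and follows essentially the same route as the paper. The paper gives no separate proof: the corollary comes from Remark~\ref{rmk:QuocientCondition} together with Corollary~\ref{cor:CategoricalControllableInequality} for the lower bound, and from \cite[Theorem 3.2]{IM25} for the upper bound; your explicit verification that \(A\otimes_B B/J(B)\cong A/AJ(B)=A/AJ(B)A\), so that \({}_A\quotient{A}{B}\in\mathcal{P}(A,B)\), is the one detail the paper leaves implicit, and it is right.
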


Applying \(-\otimes_A M\) to a relative projective resolution of \(A\) by
\((A^e,B\otimes A^{op})\)-projective modules, we get
\begin{equation} \label{eq:GlrelPDrel}
\gldim(A,B)\leq \pd_{(A^e,B\otimes A^{op})}A
=\pd_{(A^e,B^e)}A,
\end{equation}
where the last equality follows from the commentary of \cite[Page 66]{CLMS20b}. Combining this with \cite[Corollary~3.3]{IM25} yields the following.

\begin{proposition}\label{prop:CombinatorialGldimPdRegularBimodule}
Let \(A=\Bbbk Q/I\) be a finite-dimensional algebra such that the vertices
of \(Q\) can be labeled by \(1,\ldots,n\) in such a way that there are no
arrows \(i\to j\) whenever \(i<j\). Let \(B\subseteq A\), write \(B = \Bbbk R/J\), with \(J\) admissible, satisfying:
\begin{enumerate}
\item each vertex of \(R_0\) is a sum of vertices of \(Q_0\);
\item each arrow \(\beta \in R_1\) satisfies \(\beta=e\beta f\) for some vertices \(e,f\in Q_0\), that is, each \(\beta\) is a uniform element in \(A\), see \cite[Definition 2.9]{Gre99};
\item \(R_1\) contains all loops of \(Q_1\).
\end{enumerate}
If \(\gldim(A,B)=n-1\), then
\[
\gldim(A,B)=\pd_{(A^e,B^e)}A.
\]
\end{proposition}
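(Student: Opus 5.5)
The proof is a squeeze between two inequalities. One comes from \eqref{eq:GlrelPDrel} and the other from the combinatorial bound of \cite[Corollary~3.3]{IM25}.

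For the lower bound on the bimodule dimension, I apply \eqref{eq:GlrelPDrel}. Take an \((A^e,B\otimes A^{\op})\)-projective resolution of \(A\) and apply \(-\otimes_A M\) to it. This yields an \((A,B)\)-projective resolution of \(M\) of the same length. Hence
\[
n-1=\gldim(A,B)\leqslant \pd_{(A^e,B^e)}A .
\]
This step uses only the hypothesis \(\gldim(A,B)=n-1\).

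For the upper bound, I invoke \cite[Corollary~3.3]{IM25}. Under the stated conditions on \(Q\) and \(R\), it gives \(\pd_{(A^e,B^e)}A\leqslant n-1\). The conditions are: the vertex labelling \(1,\dots,n\) with no arrows \(i\to j\) for \(i<j\), each vertex of \(R_0\) a sum of vertices of \(Q_0\), the arrows of \(R\) uniform, and all loops of \(Q\) lying in \(R_1\). The main point to check is that these hypotheses match exactly those of that corollary. The labelling is the partition \(Q_0=V_1\sqcup\cdots\sqcup V_n\) with singletons \(V_i=\{i\}\). With singletons, the condition that there are no arrows between distinct vertices of the same \(V_i\) holds vacuously. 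Uniformity of the arrows \(\beta=e\beta f\) gives the required condition on \(\beta e\) and \(e\beta\). The condition on loops is given directly.

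The main obstacle is making sure that \cite[Corollary~3.3]{IM25} bounds the \((A^e,B^e)\)-relative projective dimension of the regular bimodule, rather than only \(\gldim(A,B)\). If it is stated for \((A^e,B\otimes A^{\op})\) instead, the equality in \eqref{eq:GlrelPDrel}, taken from \cite[Page 66]{CLMS20b}, transfers the bound. I would first try to prove the bound directly, by building a relative bimodule resolution of length \(n-1\) filtered by the vertex layers \(V_i\), in the spirit of \cite[Theorem 3.2]{IM25}.

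Combining the two inequalities gives \(n-1\leqslant\pd_{(A^e,B^e)}A\leqslant n-1\). Therefore \(\pd_{(A^e,B^e)}A=n-1=\gldim(A,B)\), which is the claim.
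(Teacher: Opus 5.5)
Your squeeze has the same structure as the paper's proof. The lower bound \(n-1=\gldim(A,B)\leqslant\pd_{(A^e,B^e)}A\) from \eqref{eq:GlrelPDrel} is correct, and all the content lies in the upper bound \(\pd_{(A^e,B^e)}A\leqslant n-1\). That step is not actually established. Your hypothesis check reads the labelling as the partition of \(Q_0\) into singletons \(V_i=\{i\}\). That is the input for the combinatorial bound applied to the extension \(B\subseteq A\) itself, and what it yields is \(\gldim(A,B)\leqslant n-1\). This is already known here, since it holds with equality by hypothesis, and it says nothing about the regular bimodule. You flag this risk, but neither fallback resolves it. Passing between \(B\otimes A^{\op}\) and \(B^e\) only helps once some bimodule bound is available. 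A resolution ``filtered by the layers \(V_i\)'' does not work as stated either: if \(\{i\}\) is lifted to \(\{i\}\times Q_0\) in the quiver of \(A^e\), right multiplication by arrows of \(Q\) produces arrows \((i,j)\to(i,j')\) with \(j\neq j'\) inside a single part. That violates the condition that there are no arrows between distinct vertices of the same part.

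The missing idea, which the paper's proof supplies, is to apply \cite[Theorem~3.2]{IM25} to the extension \(B^e\subseteq A^e\) and the \(A^e\)-module \(A\). The quiver of \(A^e\) has vertex set \(Q_0\times Q_0\). Weak triangularity puts the support of \({}_AA_A\) inside one half, namely \(\{(i,j)\mid i-j\geqslant 0\}\) in the paper's conventions. Partition this support into the \(n\) diagonals \(V_r=\{(i,j)\mid i-j=r\}\), \(0\leqslant r\leqslant n-1\). Every non-loop arrow of \(A^e\), of the form \(\alpha\otimes e_j\) or \(e_i\otimes\alpha^{\op}\), changes \(i-j\) strictly and always in the same direction. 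So both conditions on the partition hold, and the conditions on \(B\) transfer to \(B^e\). This gives \(\pd_{(A^e,B^e)}A\leqslant n-1\), and the squeeze closes. The number \(n\) of parts comes from the diagonals of the bimodule support, not from the singletons of \(Q_0\). If \cite[Corollary~3.3]{IM25} is literally this bimodule statement, as the sentence preceding the proposition suggests, then your citation alone suffices. In that case, though, the hypothesis matching you wrote down is the one for the wrong statement.
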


\begin{proof}
Since \(A\) is weakly triangular, the support of the regular bimodule
\({}_AA_A\) is contained in
\[
\{(i,j)\in Q_0\times Q_0\mid i-j\ge 0\},
\]
using the notation of \cite[Chapter~2]{Ska11}. Set \(n=|Q_0|\). Then
\[
V_r=\{(i,j)\in Q_0\times Q_0\mid i-j=r\},
\qquad 0\le r\le n-1,
\]
defines a partition of the support of \({}_AA_A\). Hence, by
\cite[Theorem~3.2]{IM25},
\[
\pd_{(A^e,B^e)}A\le n-1.
\]
Since \(n-1=\gldim(A,B)\), the result follows \eqref{eq:GlrelPDrel}.
\end{proof}

\begin{example}\label{ex:ProjBoundedArbitraryRelativeGlobalDimension}
  Consider \(A=\Bbbk\mathbb{A}_n\), where
  \[
    \begin{tikzcd}
      {\mathbb{A}_n: 1} \arrow[r, "\alpha_1"] &
      {2} \arrow[r, "\alpha_2"] &
      {\cdots} \arrow[r, "\alpha_{n-1}"] &
      {n}.
    \end{tikzcd}
  \]
  Let \(B\subseteq A\) be the subalgebra generated by all vertices and all paths of even length. The ordinary quivers of these algebras are illustrated below in the case \(n=6\):
  \[
    \begin{tikzcd}
      {Q_A: 1} \arrow[r, "\alpha_1"] &
      {2} \arrow[r, "\alpha_2"] &
      {3} \arrow[r, "\alpha_3"] &
      {4} \arrow[r, "\alpha_4"] &
      {5} \arrow[r, "\alpha_5"] &
      {6} \\
      {Q_B: 1} \arrow[rr, "\alpha_2\alpha_1", bend left] &
      {2} \arrow[rr, "\alpha_3\alpha_2"', bend right] &
      {3} \arrow[rr, "\alpha_4\alpha_3", bend left] &
      {4} \arrow[rr, "\alpha_5\alpha_4"', bend right] &
      {5} &
      {6}.
    \end{tikzcd}
  \]
    In this case, \(J(B)\) consists precisely of all paths in \(\mathbb{A}_n\) of even length. In particular, \(J(B)\) is neither a left nor a right ideal of \(A\), whereas
  \[AJ(B)=J(B)A=J^2(A)=J^2(\Bbbk\mathbb{A}_n) \]
  is a two-sided ideal. Consequently, by Corollary~\ref{cor:ProjBoundedAndAlgebraicCondition} and Proposition~\ref{prop:CombinatorialGldimPdRegularBimodule},
  \[\gldim(\quotient{A}{B})=\gldim(A,B)=\pd_{(A^e,B^e)}A=n-1.\]
\end{example}

\subsubsection{Trivially Twisted Extensions} \label{ex:TriviallyTwistedExtensions}

In \cite{Guo18}, the problem of constructing non-trivial extensions with prescribed relative global dimension was proposed. Partial results were obtained by constructing explicit extensions whose relative global dimension is bounded below by a given non-negative integer, see \cite[Page~2090 and Corollary~1.2]{Guo18}. We briefly recall the notion of a \emph{trivially twisted extension} from the quiver perspective.

Let \(B=\Bbbk Q/I\) and \(C=\Bbbk R/J\) be admissible quotients of path algebras with \(Q_0=R_0\). The \emph{trivially twisted extension} of \(B\) and \(C\) over the semisimple algebra \(\Bbbk Q_0\) is the algebra \(A=\Bbbk S/L\), where
\begin{enumerate}
  \item \(S_0=Q_0\);
  \item \(S_1\) is the disjoint union of \(Q_1\) and \(R_1\); and
  \item \(L\) is generated by \(I\), \(J\), and all paths \(cb\), where \(c\in R_1\) and \(b\in Q_1\).
\end{enumerate}

By \cite[Corollary~1.2]{Guo18},
\[
\gldim(A,B)\geq \gldim(C),
\]
and \cite[Examples~1 and~2]{Guo18} show that \(\gldim(C)\) may be chosen arbitrarily.

On the other hand, the extension \(C\subseteq A\) satisfies the hypotheses of Corollary~\ref{cor:CategoricalControllableInequality}, since \(AJ(C)\lhd A\). Moreover,
\(\quotient{A}{C} \cong B\), and hence
\[ \gldim(A,C)\geq \gldim(B).\]
By arguments similar to those in \cite[Examples~1 and~2]{Guo18}, the algebra \(B\) can be chosen with arbitrary global dimension.

\begin{rmk}
The approach developed in this section yields lower bounds for the global dimension of extensions not covered by \cite[Theorem~1.1]{Guo18}; for instance, see the specialization \(n=2\) in Example~\ref{ex:ProjBoundedArbitraryRelativeGlobalDimension}.
\end{rmk}
        
\subsection{Sufficient Condition for an Extension to be Controllable}

In this subsection we show that the condition \(J(B)\lhd A\) is sufficient for the extension \(B\subseteq A\) to be controllable, in this case \(\quotient{A}{B}\) as an \(A\)-module is \((A, B)\)-projective by Remark \ref{rmk:QuocientCondition}. The proof is based on \cite[Proposition 2.19]{XX13} and uses \cite[Lemma 2.3]{XX13}, which asserts that for every \(M\in\catmod{A}\) there is a split exact sequence of \(B\)-modules
\[
\begin{tikzcd}
0 \arrow[r] & {}_BM \arrow[r, "f"] & {}_BA\otimes_B M \arrow[r, "g"] & {}_B(A/B)\otimes_B M \arrow[r] & 0,
\end{tikzcd}
\]
where \(f(m)=1 \otimes m\) and \(g(a \otimes m)=\pi(a)\otimes m\). For later use, write \(K_M\) for the kernel of the multiplication map \(\mu_M:A\otimes_B M\to M, \)
and set \( \widebar{M}\doteq M/J(B)M. \)

\begin{proposition}\label{prop:KernelsIso} Let \(B \subseteq A\) be an extension of \(\Bbbk\)-algebras such that \(J(B) \lhd A\) and \(M \in \catmod{A}\). Then \(K_M \cong K_{\widebar{M}}\), as \(A\)-modules. \end{proposition}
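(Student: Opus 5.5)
We have to show \(K_M\cong K_{\widebar M}\) as \(A\)-modules when \(J(B)\lhd A\). Write \(J=J(B)\) and \(p\colon M\to \widebar M=M/JM\) for the projection. Since \(J\) is a two-sided ideal of \(A\), \(JM\) is an \(A\)-submodule of \(M\), so \(\widebar M\) is an \(A\)-module and \(p\) is \(A\)-linear. Naturality of \(\mu\) then gives a commutative square
\[
(1\otimes p)\circ\ \cdot\ ,\qquad \mu_{\widebar M}\circ(1_A\otimes p)=p\circ\mu_M,
\]
with \(1_A\otimes p\colon A\otimes_B M\to A\otimes_B\widebar M\). Hence \(1_A\otimes p\) restricts to an \(A\)-linear map \(\varphi\colon K_M\to K_{\widebar M}\). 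The plan is to show that \(\varphi\) is bijective.

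\textbf{Step 1: compute \(\ker(1_A\otimes p)\).} By right exactness of \(A\otimes_B-\), this kernel is the image of \(A\otimes_B JM\), namely
\[
N:=\operatorname{span}\{a\otimes jm : a\in A,\ j\in J,\ m\in M\}.
\]
Since \(J\subseteq B\), we have \(a\otimes jm=aj\otimes m\), and \(aj\in J\) because \(J\) is a left ideal of \(A\). So \(N=J\otimes_B M\), the image of \(J\otimes_B M\to A\otimes_B M\).

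\textbf{Step 2: \(\mu_M\) is injective on \(N\).} We have \(\mu_M(j\otimes m)=jm\). Elements of \(J\subseteq B\) can be moved across the tensor sign, so \(j\otimes m=1\otimes jm\) in \(A\otimes_B M\). Thus \(N\) lies in the image of \(f\colon M\to A\otimes_B M\), \(m\mapsto 1\otimes m\), and in fact \(N=f(JM)\). Since \(\mu_M\circ f=1_M\), the restriction of \(\mu_M\) to \(N\) is injective. Therefore
\[
N\cap K_M=0,
\]
and \(\varphi\) is injective.

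\textbf{Step 3: surjectivity of \(\varphi\).} This is the step needing the most care. Take \(y\in K_{\widebar M}\) and lift it to some \(x\in A\otimes_B M\); this is possible because \(1\otimes p\) is surjective. Then
\[
p(\mu_M(x))=\mu_{\widebar M}(y)=0,
\]
so \(\mu_M(x)\in JM\). Write \(\mu_M(x)=\sum j_i m_i\) and set \(n=\sum 1\otimes j_i m_i\in N\). Then \(\mu_M(x-n)=0\), so \(x-n\in K_M\), and
\[
(1\otimes p)(x-n)=(1\otimes p)(x)=y .
\]
Hence \(\varphi\) is surjective, and \(\varphi\colon K_M\to K_{\widebar M}\) is an \(A\)-isomorphism.

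The main subtlety is Step 1 together with the identity \(N=f(JM)\). Both rely on \(J\) being a left ideal of \(A\) (so that \(aj\in J\)) and on \(J\subseteq B\) (so that \(j\) passes through \(\otimes_B\)). Right exactness of \(A\otimes_B-\) ensures that nothing beyond \(N\) lies in the kernel.
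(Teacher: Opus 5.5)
Your proof is correct. It differs from the paper's in how injectivity of \(\varphi\) is proved.

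\textbf{Surjectivity.} This part matches the paper. The paper also takes \(\varphi\) to be the map induced by \(1_A\otimes p\), and gets surjectivity from the Snake Lemma, using that \(A\otimes_B J(B)M\to J(B)M\) is onto. Your Step 3 is that same diagram chase written out by hand.

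\textbf{Injectivity.} Here the routes diverge.
\begin{itemize}
\item The paper argues indirectly. From the \(B\)-split sequence of \cite[Lemma~2.3]{XX13} it identifies \({}_BK_M\cong{}_B(A/B)\otimes_BM\). Since \((A/B)J(B)=0\), this equals \((A/B)\otimes_B\widebar M\), so \({}_BK_M\cong{}_BK_{\widebar M}\). It then uses that a surjection between isomorphic modules of finite length is bijective.
\item You compute directly that \(\ker(1_A\otimes p)=f(J(B)M)\subseteq f(M)\). Since \(\mu_M f=1_M\), the map \(\mu_M\) is injective there, so \(\ker\varphi=K_M\cap f(J(B)M)=0\).
\end{itemize}

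\textbf{Comparison.} Your argument is elementary and self-contained. It needs neither the external lemma nor a length-counting argument, and it makes no use of \(M\) being finitely generated. It also shows plainly that only \(AJ(B)\subseteq J(B)\subseteq B\) is used, so it holds verbatim for any left ideal of \(A\) contained in \(B\). The paper's route replaces your explicit kernel computation with a structural comparison. Along the way it identifies the restriction of \(K_M\) to \(B\) with \((A/B)\otimes_B\widebar M\).

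One minor point: the displayed fragment ``\((1\otimes p)\circ\ \cdot\ ,\)'' in your first paragraph is garbled and should be removed. The identity you actually use, \(\mu_{\widebar M}\circ(1_A\otimes p)=p\circ\mu_M\), is correct.
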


\begin{proof}
  Consider the exact sequence
  \[
    0 \longrightarrow J(B)M \longrightarrow M \longrightarrow \widebar{M} \longrightarrow 0.
  \]
  Applying \(A \otimes_B -\) and comparing the induced multiplication maps yields the following (solid) commutative diagram with exact rows:
  \[
    \begin{tikzcd}
      {} & 0 \arrow[d] & 0 \arrow[d] & 0 \arrow[d] & {} \\
      {} & K_{J(B)M} \arrow[d] & K_M \arrow[d] \arrow[r, dashed, "\varphi"] & K_{\widebar{M}} \arrow[d] & {} \\
      {} & A\otimes_B J(B)M \arrow[r] \arrow[d] & A\otimes_B M \arrow[r] \arrow[d] & A\otimes_B \widebar{M} \arrow[r] \arrow[d] & 0 \\
      0 \arrow[r] & J(B)M \arrow[r] \arrow[d] & M \arrow[r] \arrow[d] & \widebar{M} \arrow[r] \arrow[d] & 0 \\
      {} & 0 & 0 & 0 &
    \end{tikzcd}
  \]
  where the vertical sequences are induced by the multiplication maps. Since the map\(A\otimes_B J(B)M \longrightarrow J(B)M\)
  is surjective, the Snake Lemma yields a surjective (dashed) homomorphism \(\varphi: K_M \longrightarrow K_{\widebar{M}}. \)

  On the other hand,
  \[ {}_B K_M \cong {}_B(A/B)\otimes_B M \cong {}_B(A/B)\otimes_B \widebar{M} \cong {}_B K_{\widebar{M}},
  \]
  where the middle isomorphism follows from the fact that \(J(B)\lhd A\), which implies that \((A/B)_B\) is semisimple. Hence \({}_B K_M\) and \({}_B K_{\widebar{M}}\) are isomorphic semisimple \(B\)-modules. Viewing \(\varphi\) as a homomorphism of \(B\)-modules, it is a surjection between isomorphic semisimple modules, and therefore an isomorphism. Consequently, \(\varphi\) is an isomorphism of \(A\)-modules.
\end{proof}

\begin{proposition}\label{prop:LocalEqualityPDControllable}
  Let \(B\subseteq A\) be an extension of \(\Bbbk\)-algebras such that
  \(J(B)\lhd A\), and let \(M\in\catmod A\). Suppose that
  \(\pd_{(A,B)}M>1\). Then
  \[
    \pd_{(A,B)}M=\pd_{\quotient{A}{B}}\widebar M.
  \]
\end{proposition}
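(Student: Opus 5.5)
The plan is a dimension-shift argument along the canonical relative short exact sequence
\[
0 \to K_M \to A\otimes_B M \xrightarrow{\mu_M} M \to 0,
\]
followed by the same sequence for \(\widebar M\), using Proposition~\ref{prop:KernelsIso} to identify the two kernels. This sequence is \((A,B)\)-exact, since \(\mu_M\) has the \(B\)-linear section \(m\mapsto 1\otimes m\). Its middle term \(A\otimes_B M\) is \((A,B)\)-projective. So the standard relative dimension-shifting, valid since relative Ext is computed from relative resolutions and has long exact sequences for \((A,B)\)-exact sequences, gives: if \(M\) is not relatively projective, then \(\pd_{(A,B)}M = \pd_{(A,B)}K_M + 1\). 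The same argument applied to \(\widebar M\) gives \(\pd_{(A,B)}\widebar M = \pd_{(A,B)}K_{\widebar M}+1\) whenever \(\widebar M\) is not relatively projective.

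Concretely, I would proceed in three steps. First, from \(\pd_{(A,B)}M>1\) deduce \(\pd_{(A,B)}K_M = \pd_{(A,B)}M - 1 \geq 1\). Second, by Proposition~\ref{prop:KernelsIso}, \(K_M\cong K_{\widebar M}\), so \(\pd_{(A,B)}K_{\widebar M}=\pd_{(A,B)}M-1\geq 1\). Third, note that \(\widebar M\) is not \((A,B)\)-projective. If it were, the sequence for \(\widebar M\) would split, and \(K_{\widebar M}\) would be a summand of \(A\otimes_B\widebar M\), hence relatively projective, which contradicts \(\pd\geq 1\). Dimension shifting then gives \(\pd_{(A,B)}\widebar M = \pd_{(A,B)}K_{\widebar M}+1=\pd_{(A,B)}M\).

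It remains to pass from \(\pd_{(A,B)}\widebar M\) to \(\pd_{\quotient{A}{B}}\widebar M\). Since \(J(B)\lhd A\), we have \(AJ(B)A=J(B)\), so \(\quotient{A}{B}=A/J(B)\), and \(\widebar M = M/J(B)M\) is a \(\quotient{A}{B}\)-module. By Remark~\ref{rmk:QuocientCondition}, \({}_A\quotient{A}{B}\in\mathcal P(A,B)\). Corollary~\ref{cor:CategoricalControllableInequality}(1) then yields \(\pd_{(A,B)}\widebar M=\pd_{\quotient{A}{B}}\widebar M\), which completes the chain.

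The main point to verify carefully is the dimension-shift identity \(\pd_{(A,B)}X = \pd_{(A,B)}K_X + 1\) for \(X\) not relatively projective, including the infinite case. I would prove it directly. Splicing a relative resolution of \(K_X\) with the sequence above gives a relative resolution of \(X\), which yields \(\leq\). Conversely, \(\Ext^{n}_{(A,B)}(K_X,-)\cong \Ext^{n+1}_{(A,B)}(X,-)\) for \(n\geq 1\), by the long exact sequence and the vanishing of relative Ext on the relatively projective \(A\otimes_B X\). This settles the case \(\pd K_X\geq 1\), which is exactly why the hypothesis \(\pd_{(A,B)}M>1\) is needed: it guarantees that \(K_M\) is not relatively projective, so the shift is detected by Ext in positive degrees.
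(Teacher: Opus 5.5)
Your proposal is correct and is essentially the paper's proof. It dimension-shifts along \(0\to K_M\to A\otimes_B M\to M\to 0\), identifies \(K_M\cong K_{\widebar M}\) via Proposition~\ref{prop:KernelsIso}, shifts back for \(\widebar M\), and concludes with Corollary~\ref{cor:CategoricalControllableInequality}. The differences are cosmetic: you check that \(\widebar M\) is not relatively projective by a splitting argument (the paper argues via \(K_{\widebar M}\) not being \(\quotient{A}{B}\)-projective), and you spell out the dimension-shift identity; that identity in fact holds for every non-relatively-projective \(X\), so the hypothesis \(\pd_{(A,B)}M>1\) is really only needed to ensure \(\widebar M\) is not relatively projective.
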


\begin{proof}
  Since \(\pd_{(A,B)}M>1\), the module \(K_M\) is not \((A,B)\)-projective. By Proposition~\ref{prop:KernelsIso} \(K_M\cong K_{\widebar M},\) hence \(K_{\widebar M}\) is not \(\quotient{A}{B}\)-projective, and therefore \(\pd_{\quotient{A}{B}}\widebar M>1\).
  Consequently,
  \begin{align*}
    \pd_{(A,B)}M
      &=1+\pd_{(A,B)}K_M\\
      &=1+\pd_{(A,B)}K_{\widebar M}\\
      &=\pd_{(A,B)}\widebar M\\
      &=\pd_{\quotient{A}{B}}\widebar M,
  \end{align*}
  where the last equality follows from
  Corollary~\ref{cor:CategoricalControllableInequality}.
\end{proof}

\begin{theorem} \label{teo:BilateralIdealIsControllable} Let \(B \subseteq A\) be an extension of \(\Bbbk\)-algebras such that \(J(B) \lhd A\). Then \(B \subseteq A\) is controllable. \end{theorem}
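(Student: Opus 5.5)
We need to prove that \(\gldim(A,B)=\gldim(\quotient{A}{B})\). Since \(J(B)\lhd A\), we have \(AJ(B)A=J(B)\), so \(\quotient{A}{B}=A/J(B)\). By Remark~\ref{rmk:QuocientCondition} the \(A\)-module \(\quotient{A}{B}\) is \((A,B)\)-projective. Corollary~\ref{cor:CategoricalControllableInequality} therefore already gives the controllable inequality \(\gldim(\quotient{A}{B})\leqslant\gldim(A,B)\), with equality whenever \(\gldim(\quotient{A}{B})=\infty\). It remains to prove the reverse inequality \(\gldim(A,B)\leqslant\gldim(\quotient{A}{B})\) when the right-hand side is finite.

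We split according to the value of \(\gldim(A,B)\). If \(\gldim(A,B)\geqslant 2\), choose \(M\in\catmod{A}\) with \(\pd_{(A,B)}M>1\); if the relative global dimension is infinite, choose such modules with arbitrarily large relative projective dimension. For each such \(M\), Proposition~\ref{prop:LocalEqualityPDControllable} gives \(\pd_{(A,B)}M=\pd_{\quotient{A}{B}}\widebar M\leqslant\gldim(\quotient{A}{B})\). Taking the supremum over these \(M\) yields \(\gldim(A,B)\leqslant\gldim(\quotient{A}{B})\), which settles this case.

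The main obstacle is the low range \(\gldim(A,B)\in\{0,1\}\), where Proposition~\ref{prop:LocalEqualityPDControllable} does not apply. If \(\gldim(A,B)=0\), the inequality \(\gldim(\quotient{A}{B})\leqslant 0\) already gives equality. So suppose \(\gldim(A,B)=1\); we must show that \(\quotient{A}{B}\) is not semisimple. Pick \(M\) with \(\pd_{(A,B)}M=1\), so that \(K_M\) is \((A,B)\)-projective but \(M\) is not. By Proposition~\ref{prop:KernelsIso}, \(K_M\cong K_{\widebar M}\).

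Suppose, for contradiction, that \(\quotient{A}{B}\) is semisimple. Then \(\widebar M\) is \(\quotient{A}{B}\)-projective, so it is a direct summand of a free \(\quotient{A}{B}\)-module. Since \(\quotient{A}{B}\) is \((A,B)\)-projective, \(\widebar M\) is \((A,B)\)-projective, and hence \(\mu_{\widebar M}\) splits.

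The plan is to transfer this splitting back to \(M\). Write \(\mu_M\colon A\otimes_B M\to M\) and use the surjection \(\varphi\colon K_M\to K_{\widebar M}\) from the proof of Proposition~\ref{prop:KernelsIso}, which is an isomorphism. A retraction \(A\otimes_B\widebar M\to K_{\widebar M}\) composed with the quotient map \(A\otimes_B M\to A\otimes_B\widebar M\) and with \(\varphi^{-1}\) gives an \(A\)-linear map \(A\otimes_B M\to K_M\). It should restrict to the identity on \(K_M\), because the quotient map restricted to \(K_M\) is exactly \(\varphi\). That makes \(K_M\) a direct summand of \(A\otimes_B M\), so \(M\cong (A\otimes_B M)/K_M\) is a summand of an induced module and hence \((A,B)\)-projective, contradicting \(\pd_{(A,B)}M=1\).

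The point I expect to require the most care is checking this compatibility of \(\varphi\) with the quotient map. If it fails, the fallback is to argue directly that \(\gldim(A,B)=1\) forces \(J(B)\) to act nontrivially on some non-split extension visible modulo \(J(B)\). Once this case is settled, the two inequalities combine to give controllability.
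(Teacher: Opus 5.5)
Your proof is correct, but it reaches the reverse inequality by a different route than the paper.

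The paper never applies Proposition~\ref{prop:LocalEqualityPDControllable} to individual \(A\)-modules. It applies it once, to the extension \(B^e\subseteq A^e\) and the regular bimodule \(A\); this implicitly uses \(J(B^e)\lhd A^e\) and \(\quotient{A^e}{B^e}\cong(\quotient{A}{B})^e\). It then combines Eilenberg's theorem \(\pd_{(\quotient{A}{B})^e}\quotient{A}{B}=\gldim(\quotient{A}{B})\) with the bound \(\gldim(A,B)\le\pd_{(A^e,B^e)}A\). The semisimple case is settled by observing \(J(B)=J(A)\) and citing \cite[Lemma~2.12]{XX13}, and the case \(\pd_{(A^e,B^e)}A=1\) is treated separately.

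You stay inside \(\catmod{A}\). When \(\gldim(A,B)\ge 2\), the supremum of \(\pd_{(A,B)}M=\pd_{\quotient{A}{B}}\widebar M\) over modules with \(\pd_{(A,B)}M>1\) already computes \(\gldim(A,B)\). In the low case, your splitting-transfer argument proves directly that \(\widebar M\in\mathcal{P}(A,B)\) forces \(M\in\mathcal{P}(A,B)\), which replaces the citation of \cite{XX13}.

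Your version is more elementary. It needs neither enveloping algebras, nor Eilenberg's theorem, nor the description of \(J(B^e)\); the last two are where separability of semisimple quotients, i.e.\ perfectness of \(\Bbbk\), enters the paper's argument. The paper's version, on the other hand, yields the identity \(\gldim(A,B)=\pd_{(A^e,B^e)}A\) as a byproduct. That is homological controllability, which Section~\ref{sec:TensorProductOfExtensions} later relies on when it cites this theorem ``and its proof''.

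The compatibility you flagged is automatic. Let \(\pi\colon M\to\widebar M\) be the quotient map. The map \(\varphi\) of Proposition~\ref{prop:KernelsIso} is, by its snake-lemma construction, the restriction of \(1_A\otimes\pi\colon A\otimes_B M\to A\otimes_B\widebar M\) to \(K_M\). Hence \(\varphi^{-1}\circ r\circ(1_A\otimes\pi)\) is an \(A\)-linear retraction onto \(K_M\), and no fallback is needed.
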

\begin{proof}
  Since \(J(B)\lhd A\), the ideal \(AJ(B)=J(B)\) is two-sided, and Corollary~\ref{cor:CategoricalControllableInequality} yields
  \[ \gldim(\quotient{A}{B})\le \gldim(A,B).\]
  Thus it remains to prove the reverse inequality.
  If \(\quotient{A}{B}\) is semisimple, then \(J(B)=J(A)\), and
  \cite[Lemma~2.12]{XX13} implies that
  \(\gldim(A,B)=0=\gldim(\quotient{A}{B})\).
  
  Assume now that \(\quotient{A}{B}\) is not semisimple.  If \(\pd_{(A^e,B^e)}A>1\), then Proposition~\ref{prop:LocalEqualityPDControllable},
  applied to the extension \(B^e\subseteq A^e\), gives
  \[
    \pd_{(A^e,B^e)}A=\pd_{({\quotient{A}{B}})^e}\quotient{A}{B}.
  \]
  By \cite[Theorem II]{Eil54},
  \[
    \pd_{(\quotient{A}{B})^e}\quotient{A}{B}=\gldim(\quotient{A}{B}),
  \]
  and therefore
  \[
    \gldim(A,B)\le \pd_{(A^e,B^e)}A=\gldim(\quotient{A}{B}).
  \]

  Finally, if \(\pd_{(A^e,B^e)}A=1\), then
  \(\pd_{(\quotient{A}{B})^e}\quotient{A}{B} \le 1\). Since \(\quotient{A}{B}\) is not
  semisimple, \(\pd_{(\quotient{A}{B})^e}\quotient{A}{B} \neq 0\), hence
  \[
    \pd_{(\quotient{A}{B})^e}\quotient{A}{B}=1=\pd_{(A^e,B^e)}A,
  \]
  and the same argument yields
  \[
    \gldim(A,B)\le \gldim(\quotient{A}{B}).
  \]
  Hence
  \[
    \gldim(A,B)=\gldim(\quotient{A}{B}),
  \]
  proving that \(B\subseteq A\) is controllable.
\end{proof}

\begin{corollary} \label{cor:BilateralIdealTensorProducts}
  Let \(B \subseteq A\) and \(D \subseteq C\) be two extensions such that \(J(B) \lhd A\) and \(J(D) \lhd C\), then
    \begin{enumerate}
      \item[(1)] \(\gldim(A^{e}, B^{e}) = 2 \gldim(A, B)\).
      \item[(2)] \(\gldim(A\otimes C, B \otimes D) = \gldim(A, B) + \gldim(C, D)\).
    \end{enumerate}
\end{corollary}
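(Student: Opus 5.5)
The plan is to reduce both statements to Theorem~\ref{teo:BilateralIdealIsControllable} applied to the tensor product extensions, combined with the classical additivity of global dimension for tensor products of finite-dimensional algebras over a perfect field.

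First, I would check that the hypothesis is preserved under tensor products. Set \(A' = A \otimes C\) and \(B' = B \otimes D\). Since \(\Bbbk\) is perfect, \(B/J(B)\otimes D/J(D)\) is semisimple. Hence
\[ J(B\otimes D) = J(B)\otimes D + B\otimes J(D). \]
Each summand is a two-sided ideal of \(A\otimes C\), because \(J(B)\lhd A\) and \(J(D)\lhd C\). So \(J(B')\lhd A'\), and Theorem~\ref{teo:BilateralIdealIsControllable} gives
\[ \gldim(A',B') = \gldim(A'/J(B')). \]
Next I would identify the quotient:
\[ A'/J(B') = (A\otimes C)/(J(B)\otimes C + A\otimes J(D)) \cong \quotient{A}{B}\otimes \quotient{C}{D}, \]
using \(AJ(B)A = J(B)\) and \(CJ(D)C = J(D)\).

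I would then invoke the classical formula for finite-dimensional algebras over a perfect field:
\[ \gldim(\Lambda\otimes\Gamma) = \gldim\Lambda + \gldim\Gamma. \]
This comes from the Künneth formula for Ext between simples, together with the fact that simple modules of \(\Lambda\otimes\Gamma\) are tensor products of simples, which holds by perfectness. Applying it, and then controllability of each factor (Theorem~\ref{teo:BilateralIdealIsControllable} again), gives
\[ \gldim(A',B') = \gldim(\quotient{A}{B}) + \gldim(\quotient{C}{D}) = \gldim(A,B) + \gldim(C,D). \]
This proves (2).

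For (1), I would specialize (2) to \(C = A^{\op}\) and \(D = B^{\op}\). The hypothesis \(J(B^{\op}) \lhd A^{\op}\) is inherited directly. Then
\[ \gldim(A^e,B^e) = \gldim(A,B) + \gldim(A^{\op},B^{\op}) = 2\gldim(A,B), \]
where the last step uses \eqref{eq:GldimOpposite}.

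The main obstacle is the step \(J(B\otimes D) = J(B)\otimes D + B\otimes J(D)\), together with the classical additivity of global dimension. Both use perfectness of \(\Bbbk\), and the additivity requires care when one of the dimensions is infinite. There I would show that both sides are infinite by producing nonzero Ext groups of arbitrarily high degree via the Künneth isomorphism
\[ \Ext^n_{\Lambda\otimes\Gamma}(S\otimes T,S'\otimes T') \cong \bigoplus_{p+q=n}\Ext^p_\Lambda(S,S')\otimes\Ext^q_\Gamma(T,T'). \]
In that case I would choose \(T = T'\) to be a simple module with \(\Hom_\Gamma(T,T)\neq 0\), so that the \(q = 0\) term is nonzero.
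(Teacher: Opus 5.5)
Your outline is the same as the paper's proof, and it shares a step that is false. The ideal \(J(B\otimes D)=J(B)\otimes D+B\otimes J(D)\) is in general \emph{not} a two-sided ideal of \(A\otimes C\), and neither summand is. For \(0\neq j\in J(B)\) and \(c\in C\) one has \((1\otimes c)(j\otimes 1)=j\otimes c\). This element lies in \(B\otimes D\supseteq J(B\otimes D)\) only if \(c\in D\).

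For example, take \(A=B=\Bbbk[x]/(x^2)\) and \(D=\Bbbk\subseteq C=\Bbbk[y]/(y^2)\). Both hypotheses hold, but \(J(B\otimes D)=\Bbbk\,(x\otimes 1)\) does not contain \(x\otimes y\). In the setting of (1), \(J(B^{e})\lhd A^{e}\) fails as soon as \(J(B)\neq 0\) and \(B\neq A\). So Theorem~\ref{teo:BilateralIdealIsControllable} cannot be applied to \(B'\subseteq A'\).

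Your displayed quotient silently replaces \(J(B')\) by \(A'J(B')A'=J(B)\otimes C+A\otimes J(D)\). The isomorphism \(\quotient{A'}{B'}\cong\quotient{A}{B}\otimes\quotient{C}{D}\) is correct. What is missing is the controllability of \(B'\subseteq A'\), and \(A'J(B')\lhd A'\) alone does not imply it: in Example~\ref{ex:NonControllableExtension}, \(AJ(B)=J(A)\lhd A\), yet the extension is not controllable.

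What your argument does establish is one inequality. Since \(A'J(B')=J(B)\otimes C+A\otimes J(D)\) is two-sided, Remark~\ref{rmk:QuocientCondition} and Corollary~\ref{cor:CategoricalControllableInequality} apply. Together with the classical additivity and the controllability of each factor, they give \(\gldim(A',B')\geq\gldim(A,B)+\gldim(C,D)\), which also settles the infinite case.

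The reverse inequality needs a genuinely different input, namely the route of Theorem~\ref{teo:ComputesGlobalDimensionOfTensorProductOfExtensions}. It combines the bound \(\gldim(A',B')\le\pd_{(A^{e},B^{e})}A+\pd_{(C^{e},D^{e})}C\) of Corollary~\ref{cor:GldimTensorProductBoundProjDim} with the equality \(\pd_{(A^{e},B^{e})}A=\gldim(\quotient{A}{B})\) whenever \(J(B)\lhd A\). The bound comes from the relative K\"unneth argument, and Section~\ref{sec:TensorProductOfExtensions} assumes \(\operatorname{char}\Bbbk\neq 2\).

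Be careful with that last equality. The paper derives it by applying Proposition~\ref{prop:LocalEqualityPDControllable} to \(B^{e}\subseteq A^{e}\), whose hypothesis \(J(B^{e})\lhd A^{e}\) fails for the same reason. For the \(A^{e}\)-module \(A\), the conclusion of Proposition~\ref{prop:KernelsIso} can instead be checked directly. The image of \(A\otimes_B J(B)\otimes_B A\) in \(A\otimes_B A\otimes_B A\) is \(1\otimes 1\otimes J(B)\), and multiplication maps it isomorphically onto \(J(B)\). The Snake Lemma then gives the kernel isomorphism.
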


\begin{proof}
  Statement~(1) follows from~(2) by taking
  \(C=A^{op}\) and \(D=B^{op}\). Thus it suffices to prove~(2).

  Since
  \(
    J(B\otimes D)
    =
    J(B)\otimes D
    +
    B\otimes J(D),
  \)
  and both \(J(B)\) and \(J(D)\) are two-sided ideals of \(A\) and \(C\),
  respectively, it follows that \(J(B\otimes D)\) is a two-sided
  ideal of \(A\otimes C\). Hence, by
  Theorem~\ref{teo:BilateralIdealIsControllable},
  \begin{align*}
    \gldim(A\otimes C,B\otimes D)
      &=
      \gldim\left(\quotient{(A \otimes C)}{(B \otimes D)}\right)\\
      &=
      \gldim\left((\quotient{A}{B})\otimes 
                  (\quotient{C}{D})\right)\\
      &=
      \gldim\left(\quotient{A}{B}\right)
      +
      \gldim\left(\quotient{C}{D}\right)\\
      &=
      \gldim(A,B)+\gldim(C,D),
  \end{align*}
  where the third equality follows from the classical tensor product
  formula for global dimensions.
\end{proof}

\begin{example}
The condition \(J(B)\lhd A\) allows the construction of an extension associated with a given algebra such that the latter controls the corresponding relative homological dimensions. Let \(C\) be a finite-dimensional \(\Bbbk\)-algebra and write
\[
C=\Bbbk Q/I,
\]
where \(I\) is an admissible ideal. Denote by \(J\lhd\Bbbk Q\) the ideal generated by the arrows of \(Q\). Then there exists \(n\ge2\) such that \(
J^n\subseteq I\subseteq J^2. \)
Let \(D\subseteq\Bbbk Q\) be the subalgebra generated by \(I\) and the vertices of \(Q\), and set
\[
A=\frac{\Bbbk Q}{J^n},
\qquad
B=\frac{D}{J^n}.
\]
Then
\[
J(B)=\frac{I}{J^n}\lhd A,
\]
implying that \(B\subseteq A\) is controllable. Moreover, from \(\quotient{A}{B} \cong C\) it follows
\[
\gldim(A,B)
=
\gldim\left(\quotient{A}{B}\right)
=
\gldim(C).
\]
Thus every finite-dimensional algebra arises as the quotient \(\quotient{A}{B}\) of a controllable extension \(B\subseteq A\), and its global dimension coincides with the corresponding relative global dimension.
\end{example}

\section{Tensor Product of Extensions} \label{sec:TensorProductOfExtensions}

To construct extensions with prescribed relative global dimension, we study how this invariant behaves under tensor products of extensions $B \subseteq A$ and $D \subseteq C$. Motivated by the equality $\gldim(A \otimes C) = \gldim(A) + \gldim(C)$ for $B = D = \Bbbk$ \cite[Theorem~16]{Aus55}, we establish certain sufficient conditions for
\[
\gldim(A \otimes C, B \otimes D) = \gldim(A,B) + \gldim(C,D)
\]
to hold. As an application, we show that \(AJ(B)=J(B)A\) is a sufficient condition for a result similar to Corollary \ref{cor:BilateralIdealTensorProducts} to hold.

Throughout this section, we assume that $\operatorname{char}(\Bbbk) \neq 2$.

\subsection{Relative Objects}

\begin{lemma} \label{lemma:TensorOfTwoRelativeProjectivesIsRelativeProjective}
  Let \(B \subseteq A\) and \(D \subseteq C\) be two extensions of finite-dimensional algebras. If \(P \in \mathcal{P}(A,B)\) and \(Q \in \mathcal{P}(C,D)\), then \(P \otimes Q \in \mathcal{P}(A \otimes C,\, B \otimes D).\)
\end{lemma}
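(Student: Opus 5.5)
The plan is to use characterization (ii) of relative projectivity from Section~\ref{sec:Premilinaries}: a module is relatively projective if and only if it is a direct summand of an induced module. Since $P \in \mathcal{P}(A,B)$, there is a $B$-module $X$ and an $A$-module $P'$ with $P \oplus P' \cong A \otimes_B X$. Likewise, since $Q \in \mathcal{P}(C,D)$, there is a $D$-module $Y$ and a $C$-module $Q'$ with $Q \oplus Q' \cong C \otimes_D Y$. Tensoring over $\Bbbk$ and distributing the direct sums gives
\[
(A \otimes_B X) \otimes (C \otimes_D Y) \cong (P\otimes Q) \oplus (P \otimes Q') \oplus (P' \otimes Q) \oplus (P' \otimes Q')
\]
as $A\otimes C$-modules. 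So $P \otimes Q$ is a direct summand of the left-hand side. These are isomorphisms of $A \otimes C$-modules because the original isomorphisms are $A$-linear and $C$-linear respectively, and $\otimes_\Bbbk$ is functorial in each variable.

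The key remaining step is the natural isomorphism of $A\otimes C$-modules
\[
(A \otimes_B X) \otimes (C \otimes_D Y) \cong (A \otimes C) \otimes_{B \otimes D} (X \otimes Y).
\]
Here $X\otimes Y$ is a $B\otimes D$-module, and $A\otimes C$ is a right $B\otimes D$-module via $(a\otimes c)(b\otimes d)=ab\otimes cd$. I would define the maps explicitly:
\[
(a\otimes x)\otimes(c\otimes y) \mapsto (a\otimes c)\otimes(x\otimes y), \qquad (a\otimes c)\otimes(x\otimes y)\mapsto (a\otimes x)\otimes(c\otimes y).
\]
For each map, I would check that it is well defined, i.e.\ balanced with respect to the relevant tensor relations. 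In the first map, moving $b$ across $\otimes_B$ corresponds to moving $b\otimes 1$ across $\otimes_{B\otimes D}$, and similarly for $d$ with $1 \otimes d$. In the second map, balancing over $b\otimes d$ splits into the two separate relations because $b\otimes d=(b\otimes 1)(1\otimes d)$, and $B\otimes D$ is spanned by pure tensors. The two maps are visibly mutually inverse on generators and are left $A\otimes C$-linear. Alternatively, one can argue abstractly via associativity and commutativity of tensor products, writing $(A\otimes C)\otimes_{B\otimes D}(X\otimes Y)$ as an iterated tensor product.

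Combining the two steps, $P\otimes Q$ is a direct summand of the induced module $(A\otimes C)\otimes_{B\otimes D} Z$ with $Z = X\otimes Y \in \catmod{(B\otimes D)}$. By characterization (ii), $P \otimes Q \in \mathcal{P}(A\otimes C, B\otimes D)$. I expect the only real content, and thus the main obstacle, to be the well-definedness of the inverse map over $B\otimes D$. Everything else is formal. Finite generation is clear since all modules involved are finite-dimensional.
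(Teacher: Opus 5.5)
Your proposal is correct and follows the paper's proof: you realize \(P\) and \(Q\) as direct summands of the induced modules \(A\otimes_B X\) and \(C\otimes_D Y\), and then use the isomorphism \((A\otimes_B X)\otimes(C\otimes_D Y)\cong(A\otimes C)\otimes_{B\otimes D}(X\otimes Y)\). Your explicit check that this isomorphism is well defined supplies a detail the paper leaves implicit.
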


\begin{proof}
  By hypothesis, there exist a \(B\)-module \(M\) and a \(D\)-module \(N\) such that \(P\) and \(Q\) are direct summands of \(A \otimes_B M\) and \(C \otimes_D N\), respectively. Hence, \(P \otimes Q\) is a direct summand of
  \[(A \otimes_B M) \otimes (C \otimes_D N)
    \cong
    (A \otimes C) \otimes_{B \otimes D} (M \otimes N).\]
  Therefore,
  \( P \otimes Q \in \mathcal{P}(A \otimes C, B \otimes D). \)
\end{proof}
  
\begin{lemma} \label{lemma:RelativeEnvelopingIsTensorableByAnyModule}
  Let \(B \subseteq A\) be an extension and let \(C\) be a \(\Bbbk\)-algebra. If \(P \in \mathcal{P}(A \otimes C^{\op},\, B \otimes C^{\op})\) and \(N\) is a \(C\)-module, then \(P \otimes_C N \in \mathcal{P}(A,B).\)
\end{lemma}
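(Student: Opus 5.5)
We use the characterization (ii) of relative projectivity: $P$ is a direct summand of an induced module, and $-\otimes_C N$ preserves direct sums and summands.

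\emph{Step 1: reduce to induced modules.} By hypothesis there is a $B\otimes C^{\op}$-module $X$ such that $P$ is a direct summand, as an $A\otimes C^{\op}$-module, of
\[
(A\otimes C^{\op})\otimes_{B\otimes C^{\op}} X .
\]
Viewing $A$--$C$ bimodules as left $A\otimes C^{\op}$-modules, the functor $-\otimes_C N\colon \catmod{(A\otimes C^{\op})}\to\catmod{A}$ is additive. Hence $P\otimes_C N$ is a direct summand of $\bigl((A\otimes C^{\op})\otimes_{B\otimes C^{\op}} X\bigr)\otimes_C N$. Since $\mathcal{P}(A,B)$ is closed under direct summands, it suffices to show that this last module is $(A,B)$-projective.

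\emph{Step 2: identify the induced module.} The key point is the natural isomorphism of $A$--$C$ bimodules
\[
(A\otimes C^{\op})\otimes_{B\otimes C^{\op}} X \;\cong\; A\otimes_B X ,
\]
where $X$ is regarded as a $B$--$C$ bimodule. One direction is $(a\otimes c)\otimes x\mapsto a\otimes xc$. The inverse is $a\otimes x\mapsto (a\otimes 1)\otimes x$. Checking well-definedness is routine: the factor $C^{\op}$ on both sides can be absorbed into $X$, and $A\otimes C^{\op}\cong A\otimes_B (B\otimes C^{\op})$ as right $B\otimes C^{\op}$-modules.

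\emph{Step 3: conclude.} By associativity of the tensor product,
\[
(A\otimes_B X)\otimes_C N \cong A\otimes_B (X\otimes_C N)
\]
as left $A$-modules, and $X\otimes_C N$ is a left $B$-module. So $(A\otimes_B X)\otimes_C N$ is an induced module, hence lies in $\mathcal{P}(A,B)$ by (ii). Its direct summand $P\otimes_C N$ therefore lies in $\mathcal{P}(A,B)$ as well.

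\emph{Main obstacle.} The only step needing care is Step 2, namely keeping track of the bimodule structures so that the isomorphism is one of $A$--$C$ bimodules, not merely of left $A$-modules. This is what allows tensoring with $N$ over $C$ afterwards. Everything else is formal. As an alternative, one could use characterization (i), applying $-\otimes_C N$ to a splitting of
\[
\mu_P\colon (A\otimes C^{\op})\otimes_{B\otimes C^{\op}}P\to P
\]
and using the same identification. This is essentially the same argument.
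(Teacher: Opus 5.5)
Your proof is correct and follows the paper's argument. You realize $P$ as a direct summand of $(A\otimes C^{\op})\otimes_{B\otimes C^{\op}} X$, apply $-\otimes_C N$, and identify the result with the induced module $A\otimes_B (X\otimes_C N)$; the paper makes this identification in one step, whereas you split it into the bimodule isomorphism $(A\otimes C^{\op})\otimes_{B\otimes C^{\op}} X\cong A\otimes_B X$ followed by associativity, which is a harmless and slightly more explicit refinement.
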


\begin{proof}
  By definition, the condition
  \(P \in \mathcal{P}(A \otimes C^{\op}, B \otimes C^{\op})\)
  means that \(P\) is a direct summand of a module of the form
  \[
    (A \otimes C^{\op}) \otimes_{B \otimes C^{\op}} M.
  \]
  Consequently, \(P \otimes_C N\) is a direct summand of
  \[
    \bigl((A \otimes C^{\op}) \otimes_{B \otimes C^{\op}} M\bigr)
    \otimes_C N
    \cong
    A \otimes_B (M \otimes_C N),
  \]
  which is induced from a \(B\)-module. Hence,
  \(P \otimes_C N \in \mathcal{P}(A,B)\).
\end{proof}

The next results concern \((A \otimes C, B \otimes D)\)-exact sequences. They rely on the language of double complexes and related results. For the reader's convenience, we briefly recall the necessary notation and facts, and fix our conventions.

Given two complexes:
  \[
    \begin{tikzcd}
      {M_\bullet: \cdots} \arrow[r] & {M_{m}} \arrow[r, "d_m"] & {M_{m-1}} \arrow[r]  & {\cdots} \arrow[r] & {M_2} \arrow[r, "d_2"] & {M_1} \arrow[r, "d_1"] & {M_0} \arrow[r] & {0}
    \end{tikzcd}
  \]
and
  \[
    \begin{tikzcd}
      {N_\bullet: \cdots} \arrow[r] & {N_{n}} \arrow[r, "\delta_n"] & {N_{n-1}} \arrow[r] & {\cdots} \arrow[r] & {N_2} \arrow[r, "\delta_2"] & {N_1} \arrow[r, "\delta_1"] & {N_0} \arrow[r] & {0,}
    \end{tikzcd}
  \]    
Their tensor product, denoted \(M_{\bullet} \otimes N_{\bullet}\), is the double complex with component of bidegree \((m,n)\) given by \(M_m \otimes N_n\). It is customary to depict the tensor product of two complexes by the following diagram:
  \[
    \begin{tikzcd}
      {} & {\vdots} \arrow[d] & {\vdots} \arrow[d] & {\vdots} \arrow[d] & {} \\
      {0} & {M_0 \otimes N_2} \arrow[l] \arrow[d] & {M_1 \otimes N_2} \arrow[d] \arrow[l] & {M_2 \otimes N_2} \arrow[d] \arrow[l] & \cdots \arrow[l] \\
      {0} & {M_0 \otimes N_1} \arrow[l] \arrow[d] & {M_1 \otimes N_1} \arrow[d] \arrow[l] & {M_2 \otimes N_1} \arrow[d] \arrow[l] & {\cdots} \arrow[l] \\
      {0} & {M_0 \otimes N_0} \arrow[l] \arrow[d] & {M_1 \otimes N_0} \arrow[d] \arrow[l] & {M_2 \otimes N_0} \arrow[d] \arrow[l] & {\cdots} \arrow[l] \\
      {} & {0} & {0} & {0} & {}                 
    \end{tikzcd}
  \]
The row differentials are \(D^{\mathrm{r}}_{m,n} = d_m \otimes 1_{N_n}\) and the column differentials are \(D^{\mathrm{c}}_{m,n} = 1_{M_m} \otimes \delta_n\). If the complexes \(M_{\bullet}\) and \(N_{\bullet}\) admit homotopies \(h_i \colon M_i \to M_{i+1}\) and \(\eta_j \colon N_j \to N_{j+1}\), respectively, then each row and each column of the double complex \(M_{\bullet} \otimes N_{\bullet}\) admits a homotopy, given by
  \[
    H^{\mathrm{r}}_{m,n} = h_m \otimes 1_{N_n}
    \quad \text{and} \quad
    H^{\mathrm{c}}_{m,n} = 1_{M_m} \otimes \eta_n,
  \]
respectively.

Recall that the total complex associated to a double complex produces a single complex. For the tensor product of complexes \(M_{\bullet}\) and \(N_{\bullet}\), the homogeneous components of the total complex \(\totalComplex(M_{\bullet} \otimes N_{\bullet})\) are
  \[
    \totalComplex(M_{\bullet} \otimes N_{\bullet})_{s} = \bigoplus_{n+m=s} M_n \otimes N_m,
  \]
with differential \(D = D^{\mathrm{r}} + D^{\mathrm{c}}\), where
  \[
    D^{\mathrm{r}}_{s} = \sum_{n+m=s} d_n \otimes 1_{N_m}, \qquad D^{\mathrm{c}}_{s} = \sum_{n+m=s} (-1)^n 1_{M_n} \otimes \delta_m.
  \]
If the complexes admit homotopies, define
  \begin{align*}
    H^{\mathrm{r}}_{s} &= \sum_{n+m=s} h_n \otimes 1_{N_m}, \\
    H^{\mathrm{c}}_{s} &= \sum_{n+m=s} (-1)^n 1_{M_n} \otimes \eta_m,
  \end{align*}
and set
  \[
    H_s \doteq H^{\mathrm{r}}_{s} + H^{\mathrm{c}}_{s} \colon \totalComplex(M_{\bullet} \otimes N_{\bullet})_{s} \to \totalComplex(M_{\bullet} \otimes N_{\bullet})_{s+1}.
  \]

The next two results construct homotopies for total complexes of tensor products of complexes. This is essential because relative homological objects are computed using exact sequences that admit homotopies.

\begin{lemma} \label{lemma:TotalTensorDoubleComplexIsRelativeExact}
  Let \(B\) and \(C\) be \(\Bbbk\)-algebras. Suppose that
  \[
    \begin{tikzcd}
      {M_\bullet:\; \cdots} \arrow[r] & {M_m} \arrow[r,"d_m"] & {M_{m-1}} \arrow[l,bend left,"h_{m-1}"] \arrow[r] & {\cdots} \arrow[r] & {M_1} \arrow[r,"d_1"] \arrow[l,bend left,"h_1"] & {M_0} \arrow[l,bend left,"h_0"] \arrow[r] & {0}
    \end{tikzcd}
  \]
  and
  \[
    \begin{tikzcd}
      {N_\bullet:\; \cdots} \arrow[r] & {N_n} \arrow[r,"\delta_n"] & {N_{n-1}} \arrow[l,bend left,"\eta_{n-1}"] \arrow[r] & {\cdots} \arrow[r] & {N_1} \arrow[r,"\delta_1"] \arrow[l,bend left,"\eta_1"] & {N_0} \arrow[l,bend left,"\eta_0"] \arrow[r] & {0}
    \end{tikzcd}
  \]
  are \(B\)- and \(C\)-exact, respectively, and admit homotopies \(h_\bullet\) and \(\eta_\bullet\), respectively. Then the total complex
  \(\totalComplex(M_\bullet \otimes N_\bullet)\)
  admits a \(B \otimes C\)-homotopy.
\end{lemma}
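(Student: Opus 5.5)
The plan is to show that the map \(H = H^{\mathrm{r}} + H^{\mathrm{c}}\) defined just before the statement satisfies \(DH + HD = 2\cdot 1\) on every component of \(\totalComplex(M_\bullet \otimes N_\bullet)\). Since \(\operatorname{char}(\Bbbk)\neq 2\) is assumed throughout this section, \(\tfrac12 H\) will then be the required homotopy. The maps \(h_n \otimes 1_{N_m}\) and \(1_{M_n}\otimes \eta_m\) are \(B\otimes C\)-linear, because \(h_n\) is \(B\)-linear and \(\eta_m\) is \(C\)-linear. Signs \((-1)^n\) and the scalar \(\tfrac12\) do not affect linearity, so \(\tfrac12 H\) is a family of \(B\otimes C\)-module maps.

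To verify the identity, I will expand
\[
DH+HD=(D^{\mathrm{r}}H^{\mathrm{r}}+H^{\mathrm{r}}D^{\mathrm{r}})+(D^{\mathrm{c}}H^{\mathrm{c}}+H^{\mathrm{c}}D^{\mathrm{c}})+(D^{\mathrm{r}}H^{\mathrm{c}}+H^{\mathrm{c}}D^{\mathrm{r}})+(D^{\mathrm{c}}H^{\mathrm{r}}+H^{\mathrm{r}}D^{\mathrm{c}})
\]
and evaluate each bracket on a single summand \(M_n\otimes N_m\).

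\emph{First bracket.} It equals \((d_{n+1}h_n+h_{n-1}d_n)\otimes 1_{N_m}=1\), by the homotopy identity for \(M_\bullet\).

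\emph{Second bracket.} Both \(D^{\mathrm{c}}\) and \(H^{\mathrm{c}}\) preserve the \(M\)-degree \(n\), so their signs multiply to \((-1)^{2n}=1\). The bracket is therefore \(1_{M_n}\otimes(\delta_{m+1}\eta_m+\eta_{m-1}\delta_m)=1\).

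\emph{Mixed brackets.} Here the signs are what make the argument work. On \(M_n\otimes N_m\):
\begin{itemize}
\item \(D^{\mathrm{r}}H^{\mathrm{c}}\) gives \((-1)^n d_n\otimes\eta_m\), whereas \(H^{\mathrm{c}}D^{\mathrm{r}}\) gives \((-1)^{n-1} d_n\otimes\eta_m\), because \(d_n\) lowers the \(M\)-degree first.
\item Likewise \(D^{\mathrm{c}}H^{\mathrm{r}}\) gives \((-1)^{n+1}h_n\otimes\delta_m\), whereas \(H^{\mathrm{r}}D^{\mathrm{c}}\) gives \((-1)^n h_n\otimes\delta_m\).
\end{itemize}
In each case the two terms cancel, so both mixed brackets vanish. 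Summing the four brackets gives \(DH+HD=2\cdot 1\).

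The main point requiring care is the bookkeeping at the boundary of the double complex. The complexes end with \(M_0\to 0\) and \(N_0\to 0\), so \(d_0=0\), \(\delta_0=0\), and \(h_{-1}=0\), \(\eta_{-1}=0\). The homotopy identities then read \(d_1h_0=1_{M_0}\) and \(\delta_1\eta_0=1_{N_0}\). I will check that the computation above remains valid for \(n=0\) or \(m=0\) under these conventions, since terms such as \(h_{n-1}d_n\) simply vanish there. I will also check that the sign cancellation in the mixed brackets does not rely on \(n\ge 1\). This is immediate, because the bidegrees involved only shift by \(\pm1\), and any term passing through a negative degree is zero on both sides.
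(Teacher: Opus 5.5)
Your proposal is correct and is essentially the paper's own proof. Like the paper, you expand \(DH+HD\) into two pure brackets, each equal to the identity, and two mixed brackets, which cancel because of the sign \((-1)^n\); this gives \(2\cdot 1\), and \(\tfrac12 H\) is then the homotopy since \(\operatorname{char}(\Bbbk)\neq 2\). As a side remark, your own computation already shows that \(H^{\mathrm{r}}\) alone satisfies \(DH^{\mathrm{r}}+H^{\mathrm{r}}D=(D^{\mathrm{r}}H^{\mathrm{r}}+H^{\mathrm{r}}D^{\mathrm{r}})+(D^{\mathrm{c}}H^{\mathrm{r}}+H^{\mathrm{r}}D^{\mathrm{c}})=1\), so neither the division by \(2\) nor the characteristic hypothesis is actually needed.
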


\begin{proof}
  A direct approach is to compute \(D_{s+1} H_s + H_{s-1} D_s\) using the decompositions \(D = D^{\operatorname{r}} + D^{\operatorname{c}}\) and \(H = H^{\operatorname{r}} + H^{\operatorname{c}}\). Expanding termwise, the first summand decomposes as
    \begin{align*}
      D_{s+1} \circ H_{s} &= \left( D_{s+1}^{\operatorname{r}} + D_{s+1}^{\operatorname{c}} \right) \circ \left( H_{s}^{\operatorname{r}} + H_{s}^{\operatorname{c}} \right) \\
        &= \left( D_{s+1}^{\operatorname{r}} \circ H_{s}^{\operatorname{r}} \right) + \left( D_{s+1}^{\operatorname{r}} \circ H_{s}^{\operatorname{c}} \right) + \left( D_{s+1}^{\operatorname{c}} \circ H_{s}^{\operatorname{r}} \right) + \Bigl( D_{s+1}^{\operatorname{c}} \circ H_{s}^{\operatorname{c}} \Bigr),
    \end{align*}
  whose computations yield
    \begin{align*}
      D_{s+1}^{\operatorname{r}} \circ H_{s}^{\operatorname{r}} &= \left( \sum_{j=0}^{s} d_{s+1-j} \otimes 1_{N_{j}} \right) \left( \sum_{i=0}^{s} h_{s-i}\otimes 1_{N_{i}} \right) = \sum_{i=0}^{s} d_{s+1-i}h_{s-1} \otimes 1_{N_{i}}, \\
        D_{s+1}^{\operatorname{r}} \circ H_{s}^{\operatorname{c}} &= \left(  \sum_{j=0}^{s} d_{s+1-j} \otimes 1_{N_{j}} \right) \left( \sum_{i=0}^{s} (-1)^{s-i} 1_{M_{s-i}} \otimes \eta_i \right) = \sum_{i=0}^{s-1} (-1)^{s-i}d_{s-i} \otimes \eta_{i}, \\
        D_{s+1}^{\operatorname{c}} \circ H_{s}^{\operatorname{r}} & = \left( \sum_{j=0}^{s} (-1)^{s-j} 1_{M_{s-j}} \otimes \delta_{j+1} \right) \left( \sum_{i=0}^{s} h_{s-i} \otimes 1_{N_{i}} \right) = \sum_{i=0}^{s-1} (-1)^{i+1} h_{i} \otimes \delta_{s-i}, \ \textnormal{and} \\
        D_{s+1}^{\operatorname{c}} \circ H_{s}^{\operatorname{c}} &=  \left( \sum_{j=0}^{s} (-1)^{s-j} 1_{M_{s-j}} \otimes \delta_{j+1} \right) \left( \sum_{i=0}^{s} (-1)^{s-i} 1_{M_{s-i}} \otimes \eta_i \right) = \sum_{i = 0}^{s} 1_{M_{s-i}} \otimes \delta_{i+1}\eta_{i}.
    \end{align*}
  Similarly, the second summand \(H_{s-1}\circ D_{s}\) becomes
    \begin{align*}
      H_{s-1} \circ D_{s} &= \left( H_{s-1}^{\operatorname{r}} + H_{s-1}^{\operatorname{c}} \right) \circ \left( D_{s}^{\operatorname{r}} + D_{s}^{\operatorname{c}} \right) \\
        &= \left( H_{s-1}^{\operatorname{r}} \circ D_{s}^{\operatorname{r}} \right) + \left( H_{s-1}^{\operatorname{r}} \circ D_{s}^{\operatorname{c}} \right) + \left( H_{s-1}^{\operatorname{c}} \circ D_{s}^{\operatorname{r}} \right) + \Bigl( H_{s-1}^{\operatorname{c}} \circ D_{s}^{\operatorname{c}} \Bigr).
    \end{align*}
  Continuing the computation yields the following identities:    
    \begin{align*}
      H_{s-1}^{\operatorname{r}} \circ D_{s}^{\operatorname{r}} &= \left( \sum_{i=0}^{s-1} h_{s-1-i} \otimes 1_{N_{i}} \right) \left( \sum_{j=0}^{s-1} d_{s-j}\otimes 1_{N_{j}} \right) = \sum_{i=0}^{s-1} h_{s-1-i}d_{s-i} \otimes 1_{N_{i}}, \\
        H_{s-1}^{\operatorname{r}} \circ D_{s}^{\operatorname{c}} &= \left( \sum_{i=0}^{s-1} h_{s-1-i} \otimes 1_{N_{i}} \right) \left( \sum_{j=0}^{s-1} (-1)^{s-1-j} 1_{M_{s-1-j}} \otimes \delta_{j+1} \right) = \sum_{i=0}^{s-1} (-1)^{i}h_{i} \otimes \delta_{s-i}, \\
        H_{s-1}^{\operatorname{c}} \circ D_{s}^{\operatorname{r}} & = \left( \sum_{i=0}^{s-1} (-1)^{s-1-i} 1_{M_{s-1-i}} \otimes \eta_{i} \right) \left( \sum_{j=0}^{s-1} d_{s-j}\otimes 1_{N_{j}} \right) \\ &= \sum_{i=0}^{s-1} (-1)^{s-1-i} d_{s-i} \otimes \eta_{i}, \ \textnormal{and} \\
        H_{s-1}^{\operatorname{c}} \circ D_{s}^{\operatorname{c}} &=  \left( \sum_{i=0}^{s-1} (-1)^{s-1-i} 1_{M_{s-1-i}} \otimes \eta_{i} \right) \left( \sum_{j=0}^{s-1} (-1)^{s-1-j} 1_{M_{s-1-j}} \otimes \delta_{j+1} \right) \\ &= \sum_{i = 0}^{s-1} 1_{M_{s-1-i}} \otimes \eta_{i}\delta_{i+1}.
    \end{align*}
  Collecting terms, we obtain
    \begin{align*}
      D^{\operatorname{r}}_{s+1} \circ H^{\operatorname{c}}_{s} + H^{\operatorname{c}}_{s-1}\circ D^{\operatorname{r}}_{s} &= \sum_{i=0}^{s-1} (-1)^{s-i} d_{s-i} \otimes \eta_{i} + \sum_{i=0}^{s-1} (-1)^{s-1-i}d_{s-i} \otimes \eta_i = 0, \\
        D_{s+1}^{\operatorname{c}} \circ H_{s}^{\operatorname{r}} + H_{s-1}^{\operatorname{r}} \circ D_{s}^{\operatorname{c}} &= \sum_{i=0}^{s-1} (-1)^{i+1} h_i \otimes \delta_{s-i} + \sum_{i=0}^{s-1} (-1)^{i}h_{i} \otimes \delta_{s-i} =0,\\
      D_{s+1}^{\operatorname{r}} \circ H_{s}^{\operatorname{r}} + H_{s-1}^{\operatorname{r}} \circ D_{s}^{\operatorname{r}} &= \sum_{i=0}^{s} d_{s+1-i} h_{s-i} \otimes 1_{N_i} + \sum_{i=0}^{s-1} h_{s-1-i} d_{s-i} \otimes 1_{N_i} \\
        &= d_1h_0 \otimes 1_{N_{s}} + \sum_{i=0}^{s-1} (d_{s+1-i} h_{s-i} + h_{s-1-i} d_{s-i}) \otimes 1_{N_{i}} \\
        &= 1_{M_0}\otimes 1_{N_{s}} + \sum_{i=0}^{s-1} 1_{M_{s-i}} \otimes 1_{N_{i}} \\
        &= 1_{\totalComplex(M_{\bullet}\otimes N_{\bullet})_{s}},
      \end{align*}
      \begin{align*}
      D_{s+1}^{\operatorname{c}} \circ H_{s}^{\operatorname{c}} + H_{s-1}^{\operatorname{c}} \circ D_{s}^{\operatorname{c}} &= \sum_{i=0}^{s} 1_{M_{s-i}} \otimes \delta_{i+1}\eta_i + \sum_{i=0}^{s-1} 1_{M_{s-1-i}} \otimes \eta_{i} \delta_{i+1}\\
        &= 1_{M_{s}} \otimes \delta_1 \eta_0 + \sum_{i=1}^{s} 1_{M_{s-i}} \otimes (\delta_{i+1}\eta_i + \eta_{i-1} \delta_{i}) \\
        &= 1_{M_{s}} \otimes 1_{N_0} \sum_{i=1}^{s} 1_{M_{s-i}} \otimes 1_{N_{i}} \\ &= 1_{\totalComplex(M_{\bullet}\otimes N_{\bullet})_{s}}.
    \end{align*}
  Consequently
    \begin{align*}
      D_{s+1} \circ H_{s} + H_{s-1} \circ D_{s} &=\left( D_{s+1}^{\operatorname{r}} \circ H_{s}^{\operatorname{r}} \right) + \left( D_{s+1}^{\operatorname{r}} \circ H_{s}^{\operatorname{c}} \right) + \left( D_{s+1}^{\operatorname{c}} \circ H_{s}^{\operatorname{r}} \right) + \left( D_{s+1}^{\operatorname{c}} \circ H_{s}^{\operatorname{c}} \right) \\
        & \phantom{=} + \left(H_{s-1}^{\operatorname{r}} \circ D_s^{\operatorname{r}}\right) + \left(H_{s-1}^{\operatorname{r}} \circ D_s^{\operatorname{c}}\right) + \left(H_{s-1}^{\operatorname{c}}\circ D_s^{\operatorname{r}}\right) + \left( H_{s-1}^{\operatorname{c}} \circ D_s^{\operatorname{c}} \right) \\
        &= \left( D_{s+1}^{\operatorname{r}} \circ H_{s}^{\operatorname{r}} + H_{s-1}^{\operatorname{r}} \circ D_{s}^{\operatorname{r}} \right) + \left( D_{s+1}^{\operatorname{c}} \circ H_{s}^{\operatorname{c}} + H_{s-1}^{\operatorname{c}} \circ D_{s}^{\operatorname{c}} \right) \\
        &= 1_{\totalComplex(M_{\bullet}\otimes N_{\bullet})_{s}} + 1_{\totalComplex(M_{\bullet}\otimes N_{\bullet})_{s}} = 2 \cdot 1_{\totalComplex(M_{\bullet}\otimes N_{\bullet})_{s}}.
    \end{align*}
 Since \(\operatorname{char}(\Bbbk) \neq 2\), the above equality implies that
    \begin{equation} \label{eq:HalfNaiveHomotopyDoubleComplex}
      \widebar{H_{s}} = \frac{1}{2}\left( H_{s}^{\operatorname{r}} + H_{s}^{\operatorname{c}} \right) = \frac{1}{2}\left( \sum_{s = n + m} h_n \otimes 1_{N_m} + \sum_{s = n+m} (-1)^{n}1_{M_n} \otimes \eta_m \right)
    \end{equation}
  is a \((B\otimes D)\)-homotopy for \(\totalComplex(M_{\bullet} \otimes N_{\bullet})\).
\end{proof}

\begin{proposition}
  Let \(B \subseteq A\) and \(D \subseteq C\) be extensions of algebras. Then the total complex of the tensor product of an \((A,B)\)-exact sequence with a \((C,D)\)-exact sequence is \((A \otimes C, B \otimes D)\)-exact.
\end{proposition}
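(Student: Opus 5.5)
The proposition is essentially a reformulation of Lemma~\ref{lemma:TotalTensorDoubleComplexIsRelativeExact}. What remains is to check that all the structures are compatible with the algebras involved. Let \(M_\bullet\) be an \((A,B)\)-exact sequence with differentials \(d_\bullet\) and \(B\)-linear homotopy \(h_\bullet\). Let \(N_\bullet\) be a \((C,D)\)-exact sequence with differentials \(\delta_\bullet\) and \(D\)-linear homotopy \(\eta_\bullet\).

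First, I would check that \(\totalComplex(M_\bullet\otimes N_\bullet)\) is a complex of \(A\otimes C\)-modules. Each component \(M_n\otimes N_m\) is an \(A\otimes C\)-module in the obvious way. The maps \(d_n\otimes 1\) and \((-1)^n 1\otimes\delta_m\) are \(A\otimes C\)-linear, since \(d_n\) is \(A\)-linear and \(\delta_m\) is \(C\)-linear. Hence \(D=D^{\mathrm r}+D^{\mathrm c}\) is \(A\otimes C\)-linear. The relation \(D^2=0\) is the usual sign check: the cross terms \(d\otimes\delta\) appear with the opposite signs \((-1)^{n-1}\) and \((-1)^n\) and cancel.

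Second, I would check that the maps \(\widebar{H_s}\) from \eqref{eq:HalfNaiveHomotopyDoubleComplex} are \(B\otimes D\)-linear. The maps \(h_n\otimes 1_{N_m}\) and \(1_{M_n}\otimes\eta_m\) are tensor products of a \(B\)-linear map with a \(D\)-linear one. In each case the other factor is the identity, which is \(A\)- or \(C\)-linear and hence \(B\)- or \(D\)-linear. So both maps are \(B\otimes D\)-linear, and the scalar \(\tfrac12\) and the signs do not affect linearity. Lemma~\ref{lemma:TotalTensorDoubleComplexIsRelativeExact} then gives \(D_{s+1}\widebar{H_s}+\widebar{H_{s-1}}D_s=1\). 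Therefore the total complex is a complex of \(A\otimes C\)-modules admitting a \(B\otimes D\)-homotopy, which is exactly the definition of \((A\otimes C,B\otimes D)\)-exactness.

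The main obstacle is the bookkeeping at the boundary of the complexes. The lemma is stated for complexes concentrated in nonnegative degrees ending in \(0\), and its computation uses \(d_1h_0=1_{M_0}\) and \(\delta_1\eta_0=1_{N_0}\). An \((A,B)\)-exact sequence, such as a relative resolution including the augmentation \(P_0\to M\to 0\), fits this shape after reindexing so that its last nonzero term sits in degree \(0\). I would therefore reindex both sequences to satisfy the lemma's conventions. If the sequences are unbounded in both directions, I would note that the same termwise computation applies verbatim with sums over all \(n+m=s\), since the boundary terms then never occur. Finally, the standing assumption \(\operatorname{char}(\Bbbk)\neq 2\) is used exactly where the lemma divides by \(2\).
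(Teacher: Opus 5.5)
Your proposal is correct and follows the paper's proof, which simply appeals to Lemma~\ref{lemma:TotalTensorDoubleComplexIsRelativeExact} together with the definition of relative exactness; your checks that the total differential is \(A\otimes C\)-linear and that \(\widebar{H_s}\) is \(B\otimes D\)-linear supply exactly the details the paper leaves implicit. As a side remark, the computations in that lemma already give \(D_{s+1}H^{\mathrm{r}}_s+H^{\mathrm{r}}_{s-1}D_s=1\), so \(H^{\mathrm{r}}\) alone is a \(B\otimes D\)-homotopy, and the division by \(2\) (hence the assumption \(\operatorname{char}(\Bbbk)\neq 2\)) is not actually needed for this proposition.
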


\begin{proof}
  Follows directly from Lemma \ref{lemma:TotalTensorDoubleComplexIsRelativeExact} and the definition of relative exact sequences.
\end{proof}

It is known that if
  \[
    \begin{tikzcd}
      {M_\bullet: \cdots} \arrow[r] & {P_{m}} \arrow[r, "d_m"] & {P_{m-1}} \arrow[r, "d_{m-1}"]  & {\cdots} \arrow[r] & {P_2} \arrow[r, "d_2"] & {P_1} \arrow[r, "d_1"] & {P_0} \arrow[r, "d_0"] & {M} \arrow[r] & {0}
    \end{tikzcd}
  \]
and
  \[
    \begin{tikzcd}
      {N_\bullet: \cdots} \arrow[r] & {Q_{n}} \arrow[r, "\delta_n"] & {Q_{n-1}} \arrow[r, "\delta_{n-1}"]  & {\cdots} \arrow[r] & {Q_2} \arrow[r, "\delta_2"] & {Q_1} \arrow[r, "\delta_1"] & {Q_0}  \arrow[r, "\delta_0"] & {N} \arrow[r] & {0}
    \end{tikzcd}
  \]
are two acyclic complexes, and \(P_{\bullet}, Q_{\bullet}\) their respective deleted versions, then
  \[
    \begin{tikzcd}
      {\totalComplex(P_{\bullet} \otimes Q_{\bullet})} \arrow[r, "\Delta_0"] & {M \otimes N} \arrow[r] & {0}
    \end{tikzcd}
  \]
is an acyclic complex with \(\Delta_0 = d_0 \otimes \delta_0\). For details, see \cite[Pages 9--10]{CLMS20b}. The following lemma provides another proof of this fact when \(\operatorname{char}(\Bbbk) \neq 2\), adapted to the setting of relative homological algebra by providing a homotopy to the complex above.

\begin{lemma} \label{lemma:HomotopyForDeletedTensorProduct}
  Let \(B\) and \(D\) be \(\Bbbk\)-algebras. Suppose that \(M_\bullet\) is an acyclic complex of \(B\)-modules admitting a \(B\)-homotopy \(h_\bullet\), and let \(P_\bullet\) be its deleted version. Similarly, suppose that \(N_\bullet\) is an acyclic complex of \(D\)-modules admitting a \(D\)-homotopy \(\eta_\bullet\), and let \(Q_\bullet\) be its deleted version. Then
  \[
    \begin{tikzcd}
      {\totalComplex(P_{\bullet} \otimes Q_{\bullet})}
      \arrow[r, "\Delta_0"]
      & {M \otimes N}
      \arrow[r]
      & {0}
    \end{tikzcd}
  \]
  is an acyclic complex of \(B \otimes D\)-modules admitting a \(B \otimes D\)-homotopy.
\end{lemma}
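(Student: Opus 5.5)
Acyclicity of the augmented complex is the classical fact cited from \cite[Pages 9--10]{CLMS20b}. Since \(\operatorname{char}(\Bbbk)\neq 2\), we can instead build a \(B\otimes D\)-homotopy directly, which gives acyclicity and relative exactness at once. The natural route is to reduce to Lemma~\ref{lemma:TotalTensorDoubleComplexIsRelativeExact}. Regard the augmented complexes \(M_\bullet\) and \(N_\bullet\) as complexes \(\widetilde{M}_\bullet\), \(\widetilde{N}_\bullet\) with \(M\) placed in degree \(-1\), so \(\widetilde M_i=P_i\) for \(i\ge 0\) and \(\widetilde M_{-1}=M\). These are contractible, with homotopies \(h_\bullet\) and \(\eta_\bullet\), where \(h_{-1}\colon M\to P_0\) satisfies \(d_0h_{-1}=1_M\).

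First I would observe that Lemma~\ref{lemma:TotalTensorDoubleComplexIsRelativeExact} does not depend on where the complexes start; it only uses \(dh+hd=1\) in every degree. Its computation therefore gives that \(\widebar H=\tfrac12(H^{\mathrm r}+H^{\mathrm c})\) is a \(B\otimes D\)-contraction of \(\totalComplex(\widetilde M_\bullet\otimes\widetilde N_\bullet)\). I would redo the bookkeeping with lower index \(-1\), being careful with signs \((-1)^n\) for \(n=-1\).

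The difficulty is that \(\totalComplex(\widetilde M\otimes\widetilde N)\) is not the complex in the statement. In degree \(-2\) it has \(M\otimes N\), in degree \(-1\) it has \(P_0\otimes N\oplus M\otimes Q_0\), and so on. The statement's complex \(X\colon \totalComplex(P\otimes Q)\to M\otimes N\to 0\) is instead built from the short exact sequences \(0\to P_\bullet\to \widetilde M_\bullet\to M[-1]\to 0\). So I would rather work directly on \(X\). I would put the augmentation \(M\otimes N\) in degree \(-1\) with \(\Delta_0=d_0\otimes\delta_0\), and define
\[
\Theta_{-1}=h_{-1}\otimes\eta_{-1}\colon M\otimes N\to P_0\otimes Q_0 .
\]
For \(s\ge 0\) I would take a corrected version of \(\widebar H_s\). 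Here \(h_0\otimes 1\) and \(1\otimes\eta_0\) must be replaced by terms such as \(h_0\otimes \eta_{-1}\delta_0\) and \(h_{-1}d_0\otimes\eta_0\), since \(P_\bullet\) alone is not contractible at degree \(0\). The defect \(1-d_1h_0=h_{-1}d_0\) has to be absorbed, and this dictates the correction.

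Concretely, I would verify the identity \(D\Theta+\Theta D=1\) on each bidegree \((n,m)\). There are three cases. Interior terms with \(n,m\ge 1\) cancel exactly as in the proof of Lemma~\ref{lemma:TotalTensorDoubleComplexIsRelativeExact}, giving \(2\cdot 1\) before the factor \(\tfrac12\). The edge terms \(n=0\) or \(m=0\) require the corrections. The corner \(P_0\otimes Q_0\) and the augmentation degree are the third case; the latter is immediate from \(\Delta_0\Theta_{-1}=d_0h_{-1}\otimes\delta_0\eta_{-1}=1\).

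I expect the edge and corner verification to be the main obstacle: choosing the boundary corrections so that the factor \(2\) still appears uniformly. If guessing fails, I would fall back on a homotopy transfer. View \(X\) as a quotient/cone of \(\totalComplex(\widetilde M\otimes\widetilde N)\) by the subcomplexes \(\widetilde M\otimes N[-1]\) and \(M[-1]\otimes\widetilde N\). Each of these is contractible via \(h\otimes 1\) and \(1\otimes\eta\), and all the relevant splittings are \(B\otimes D\)-linear. Standard perturbation of contractions along \(B\otimes D\)-split short exact sequences of complexes then yields a \(B\otimes D\)-contraction of \(X\).
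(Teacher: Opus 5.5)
Your proposal does not yet contain a proof. The interior of $\totalComplex(P_\bullet\otimes Q_\bullet)$ is covered by Lemma~\ref{lemma:TotalTensorDoubleComplexIsRelativeExact}, and degree $-1$ is immediate. So the real content of the lemma lies on the edges $P_0\otimes Q_m$, $P_n\otimes Q_0$ and at the corner. There you only announce ``a corrected version of $\widebar{H}_s$'' and flag it as the main obstacle.

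The hint you give also fails if taken literally. Write $\pi\doteq h_{-1}d_0$ and $\rho\doteq\eta_{-1}\delta_0$. Replacing $h_0\otimes 1$ and $1\otimes\eta_0$ in $\widebar{H}_0$ by $h_0\otimes\rho$ and $\pi\otimes\eta_0$ gives $\Delta_1\Theta_0+\Theta_{-1}\Delta_0=\tfrac12(\pi\otimes 1+1\otimes\rho)\neq 1$. The corrections must be added rather than substituted, and they must run along the whole edges. One choice that works, in every characteristic, is
\[
  \Theta_{-1}=h_{-1}\otimes\eta_{-1},
  \qquad
  \Theta|_{P_n\otimes Q_m}=
  \begin{cases}
    h_n\otimes 1, & n\geq 1,\\
    h_0\otimes 1+\pi\otimes\eta_m, & n=0.
  \end{cases}
\]
The check uses only the following identities:
\begin{itemize}
  \item $d_0h_{-1}=1_M$ and $\delta_0\eta_{-1}=1_N$;
  \item $d_1h_0=1-\pi$ and $\pi d_1=0$;
  \item $d_{n+1}h_n+h_{n-1}d_n=1$ for $n\geq 1$;
  \item $\delta_1\eta_0=1-\rho$ and $\delta_{m+1}\eta_m+\eta_{m-1}\delta_m=1$ for $m\geq1$.
\end{itemize}
With these, $\Delta\Theta+\Theta\Delta=1$ holds in every degree. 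For instance, on $P_0\otimes Q_0$ it reads $(1-\pi)\otimes 1+\pi\otimes(1-\rho)+\pi\otimes\rho=1$. Averaging with the mirror homotopy $(-1)^n1\otimes\eta+h\otimes\rho$ (with $\rho$ extended by zero off $Q_0$) gives the ``factor $2$'' version you were aiming for. It coincides with $\widebar{H}$ on the interior, but the division by $2$ is unnecessary.

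For comparison, the paper avoids a global formula. It checks $\Delta_{n+1}\widetilde{H}_n\Delta_{n+1}=\Delta_{n+1}$ for an $\widetilde{H}_n$ defined piecewise on $B\otimes D$-summands: $\widebar{H}$ on $\ker d_i\otimes\ker\delta_j$, and $h\otimes 1$ or $1\otimes\eta$ on the kernel-times-complement summands.

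Your fallback is not set up correctly either. In $\widetilde M_\bullet$ it is $M$, in degree $-1$, that forms a subcomplex, and $P_\bullet$ is the quotient, not the other way round. Likewise, the quotient of $\totalComplex(\widetilde M_\bullet\otimes\widetilde N_\bullet)$ by the bottom row $\widetilde M_\bullet\otimes N$ plus the left column $M\otimes\widetilde N_\bullet$ is a shift of $\totalComplex(P_\bullet\otimes Q_\bullet)$, not $X$. That complex has homology $M\otimes N$, so in general it is not contractible. The ``two contractible subcomplexes'' argument breaks because their sum is not contractible: they intersect in $M\otimes N$.

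The correct homotopical version is this. Up to shift and signs, $X$ is the mapping cone of $\Delta_0=d_0\otimes\delta_0\colon\totalComplex(P_\bullet\otimes Q_\bullet)\to M\otimes N$. The maps $d_0\colon P_\bullet\to M$ and $\delta_0\colon Q_\bullet\to N$ are $B$-linear, respectively $D$-linear, homotopy equivalences, with inverses $h_{-1}$, $\eta_{-1}$ and homotopies $h_{\geq 0}$, $\eta_{\geq 0}$. Their tensor product is therefore a $B\otimes D$-linear homotopy equivalence, and its cone is $B\otimes D$-contractible. The displayed $\Theta$ is exactly this argument made explicit.
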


\begin{proof}
  An equivalent way to establish the existence of such a homotopy is to construct homomorphisms of \(B \otimes D\)-modules \(\widetilde{H}_n\) satisfying \[\Delta_{n+1} \widetilde{H}_n \Delta_{n+1} = \Delta_{n+1},\]
where \(\Delta_\bullet\) denotes the differential of
\[
  \begin{tikzcd}
    \totalComplex(P_{\bullet} \otimes Q_{\bullet})
    \arrow[r, "\Delta_0"]
    & M \otimes N
    \arrow[r]
    & 0.
  \end{tikzcd}
\]
  
At degree \(-1\), set
\[
  \widetilde{H}_{-1}
  =
  h_{-1} \otimes \eta_{-1}
  \colon
  M \otimes N \longrightarrow P_0 \otimes Q_0.
\]
Then
\[
  \Delta_0 \widetilde{H}_{-1} \Delta_0
  =
  d_0 h_{-1} d_0 \otimes \delta_0 \eta_{-1} \delta_0
  =
  d_0 \otimes \delta_0
  =
  \Delta_0,
\]
since \(h_\bullet\) and \(\eta_\bullet\) are homotopies for \(d_\bullet\) and \(\delta_\bullet\), respectively.

Notice that the homotopies for \(M_\bullet\) and \(N_\bullet\) yield the decompositions
\[
  P_i = \ker(d_i) \oplus M_i
  \qquad\text{and}\qquad
  Q_j = \ker(\delta_j) \oplus N_j,
\]
where \(M_i\) and \(N_j\) are \(B\)- and \(D\)-submodules, respectively. These decompositions will be used extensively in the constructions that follow.

In degree \(0\), the domain of \(\widetilde{H}_0\) is
\[
  \totalComplex(P_{\bullet} \otimes Q_{\bullet})_0
  =
  P_0 \otimes Q_0.
\]
Using the decompositions above, we obtain
\[
  P_0 \otimes Q_0
  =
  \bigl(\ker(d_0) \otimes \ker(\delta_0)\bigr)
  \oplus
  \bigl(\ker(d_0) \otimes N_0\bigr)
  \oplus
  \bigl(M_0 \otimes \ker(\delta_0)\bigr)
  \oplus
  \bigl(M_0 \otimes N_0\bigr).
\]
The codomain is
\[
  \totalComplex(P_{\bullet} \otimes Q_{\bullet})_1
  =
  (P_1 \otimes Q_0) \oplus (P_0 \otimes Q_1).
\]
 When applied in degree \(1\), the differential \(\Delta\) interacts with the decomposition of \(P_0 \otimes Q_0\) as follows:
\begin{align*}
  (d_1 \otimes 1_{Q_0})(P_1 \otimes Q_0)
  &= \ker(d_0) \otimes Q_0 \\
  &= \bigl(\ker(d_0) \otimes \ker(\delta_0)\bigr)
     \oplus
     \bigl(\ker(d_0) \otimes N_0\bigr), \\
  (1_{P_0} \otimes \delta_1)(P_0 \otimes Q_1)
  &= P_0 \otimes \ker(\delta_0) \\
  &= \bigl(\ker(d_0) \otimes \ker(\delta_0)\bigr)
     \oplus
     \bigl(M_0 \otimes \ker(\delta_0)\bigr).
\end{align*}
Since the submodule \(\ker(d_0) \otimes \ker(\delta_0)\) occurs in the images of both \(P_1 \otimes Q_0\) and \(P_0 \otimes Q_1\), it must be accounted for twice, as was already done in Proposition~\ref{lemma:TotalTensorDoubleComplexIsRelativeExact}. On the other hand, for the summands that occur only once, we can simply apply the naive homotopy. Set
\[
  L =
  \bigl(\ker(d_0) \otimes \ker(\delta_0)\bigr)
  \oplus
  \bigl(M_0 \otimes N_0\bigr).
\]
Define \(\widetilde{H}_0\) on each \(B \otimes D\)-submodule as follows:
\[
  \widetilde{H}_0 =
  \begin{cases}
    \widebar{H}_0,
    & \text{on \(L\), as in Equation~\ref{eq:HalfNaiveHomotopyDoubleComplex},}\\
    h_0 \otimes 1_{Q_0},
    & \text{on \(\ker(d_0) \otimes N_0\),}\\
    1_{P_0} \otimes \eta_0,
    & \text{on \(M_0 \otimes \ker(\delta_0)\).}
  \end{cases}
\]

We now verify the identity
\(\Delta_1 \widetilde{H}_0 \Delta_1 = \Delta_1\)
on \(P_1 \otimes Q_0\). On the summand
\(\ker(d_0) \otimes N_0\), we have
\[
  \Delta_1 \widetilde{H}_0
  =
  d_1 h_0 \otimes 1_{N_0}
  =
  1.
\]
On the summand
\(M_0 \otimes \ker(\delta_0)\), we have
\[
  \Delta_1 \widetilde{H}_0
  =
  \frac{d_1 h_0}{2} \otimes 1_{Q_0}
  +
  1_{P_0} \otimes \frac{\delta_1 \eta_0}{2}
  =
  1.
\]
Thus,
\[
  \Delta_1 \widetilde{H}_0 \Delta_1
  =
  \Delta_1
  \qquad\text{on } P_1 \otimes Q_0.
\]
 Symmetrically, the same argument applies to \(P_0 \otimes Q_1\). Combining the two computations, we obtain
\[
  \Delta_1 \widetilde{H}_0 \Delta_1 = \Delta_1.
\]

For degrees \(n > 0\), we consider
\[
  \begin{tikzcd}
    {\totalComplex(P_\bullet \otimes Q_\bullet)_{n+1}
      = \bigoplus\limits_{i+j=n+1} P_i \otimes Q_j}
    \arrow[rr, "\Delta_{n+1}"', shift right]
    & {}
    &
    {\bigoplus\limits_{r+s=n} P_r \otimes Q_s
      = \totalComplex(P_\bullet \otimes Q_\bullet)_n,}
    \arrow[ll, "\widetilde{H}_n"', shift right]
  \end{tikzcd}
\]
and aim to construct a section \(\widetilde{H}_n\) of the restriction of \(\Delta_{n+1}\) to its image.
  
  To define \(\widetilde{H}_n\), we distinguish two cases according to the indices \(i\) and \(j\). The \emph{frontier} consists of the two direct summands for which \(i=0\) or \(j=0\), while the \emph{interior} consists of all remaining summands. The latter case is simpler, since deleting the original complexes does not affect these summands, and the construction from Lemma~\ref{lemma:TotalTensorDoubleComplexIsRelativeExact} applies directly.

For the interior, define
\[
  \widetilde{H}_n
  =
  \widebar{H}_n,
  \qquad
  \text{on }
  \quad
  \Delta_{n+1}
  \left(
    \bigoplus_{t=1}^{n} P_t \otimes Q_{n+1-t}
  \right),
\]
where \(\widebar{H}_n\) is as in Equation~\ref{eq:HalfNaiveHomotopyDoubleComplex}. With this choice,
\[
  \Delta_{n+1} \widetilde{H}_n \Delta_{n+1}
  =
  \Delta_{n+1}
\]
on
\[
  \bigoplus_{t=1}^{n} P_t \otimes Q_{n+1-t},
\]
since the corresponding identity holds for
\(\widebar{H}_n\) by Equation~\ref{eq:HalfNaiveHomotopyDoubleComplex}.
  
 To construct \(\widetilde{H}_n\) on the frontier, that is, on the images of
\(P_0 \otimes Q_{n+1}\) and \(P_{n+1} \otimes Q_0\) under
\(\Delta_{n+1}\), it suffices, by symmetry, to consider the latter case. The relevant differential images are
\begin{align*}
  (d_1 \otimes 1_{Q_n})(P_1 \otimes Q_n)
  &=
  \ker(d_n) \otimes \ker(\delta_0)
  \oplus
  M_n \otimes \ker(\delta_0), \\
  (1_{P_0} \otimes \delta_{n+1})(P_0 \otimes Q_{n+1})
  &=
  \ker(d_n) \otimes \ker(\delta_0)
  \oplus
  \ker(d_n) \otimes N_0.
\end{align*}

In particular, \(\widetilde{H}_n\) is already defined on
\(\ker(d_n) \otimes \ker(\delta_0)\) by the interior rule. This summand occurs in the image of both \(P_1 \otimes Q_n\) and \(P_0 \otimes Q_{n+1}\) under \(\Delta_{n+1}\). The only remaining summand is
\(\ker(d_n) \otimes N_0\), which occurs in the image of only one of these differential maps. Thus, it suffices to use the naive homotopy and define
\[
  \widetilde{H}_n=h_n \otimes 1_{Q_0} \qquad \text{on } \ker(d_n) \otimes N_0.
\]

    Direct computation gives
    \begin{align*}
      \Delta_{n+1}\widetilde{H}_n&=d_{n+1}h_n \otimes 1_{N_0}=1 \qquad \text{on } \ker(d_n)\otimes N_0, \\
  \Delta_{n+1}\widetilde{H}_n&=\frac{d_{n+1}h_n}{2}\otimes 1_{Q_0}+1_{P_0}\otimes\frac{\delta_1\eta_0}{2}=1 \qquad \text{on } \ker(d_n)\otimes\ker(\delta_0).
\end{align*}
    Hence,
    \[      \Delta_{n+1}\widetilde{H}_n\Delta_{n+1}=\Delta_{n+1}
    \]
    on \(P_{n+1}\otimes Q_0\). By symmetry, the same holds on \(P_0\otimes Q_{n+1}\). This covers all cases and completes the proof.
\end{proof}

\begin{proposition} \label{prop:RelativeResolutionForTensorProduct}
  Let \(B \subseteq A\) and \(D \subseteq C\) two extensions of \(\Bbbk\)-algebras. If
    \[
      P_\bullet \xrightarrow[]{} M \xrightarrow[]{} 0 \ \textnormal{and} \ Q_\bullet \xrightarrow[]{} N \xrightarrow[]{} 0\ 
    \]
  are \((A, B)\) and \((C, D)\)-projective resolutions, respectively, then
    \[
      \begin{tikzcd}
        {\totalComplex(P_{\bullet} \otimes Q_{\bullet})} \arrow[r] & {M\otimes N} \arrow[r] & {0}
      \end{tikzcd}
    \]
  is an \((A \otimes C, B \otimes D)\)-projective resolution.
\end{proposition}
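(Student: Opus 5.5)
Two things must be checked: each term of \(\totalComplex(P_{\bullet}\otimes Q_{\bullet})\) lies in \(\mathcal{P}(A\otimes C, B\otimes D)\), and the augmented complex \(\totalComplex(P_{\bullet}\otimes Q_{\bullet})\to M\otimes N\to 0\) is \((A\otimes C,B\otimes D)\)-exact, i.e.\ it is a complex of \(A\otimes C\)-modules admitting a \(B\otimes D\)-homotopy.

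For the terms, the degree-\(s\) component is \(\bigoplus_{i+j=s}P_i\otimes Q_j\). Since \(P_i\in\mathcal{P}(A,B)\) and \(Q_j\in\mathcal{P}(C,D)\), Lemma~\ref{lemma:TensorOfTwoRelativeProjectivesIsRelativeProjective} gives \(P_i\otimes Q_j\in\mathcal{P}(A\otimes C,B\otimes D)\). The class \(\mathcal{P}(A\otimes C,B\otimes D)\) is closed under finite direct sums, because a direct sum of summands of induced modules is a summand of the induced module of the direct sum. Hence every component of the total complex is relatively projective.

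For exactness, first note that the total complex differential \(D=D^{\mathrm r}+D^{\mathrm c}\) and the augmentation \(\Delta_0=d_0\otimes\delta_0\) are \(A\otimes C\)-linear, since the \(d_i\) are \(A\)-linear and the \(\delta_j\) are \(C\)-linear. Now view the \((A,B)\)-exact augmented sequence \(P_\bullet\to M\to 0\) as an acyclic complex of \(B\)-modules with a \(B\)-homotopy \(h_\bullet\). Likewise view \(Q_\bullet\to N\to 0\) as an acyclic complex of \(D\)-modules with a \(D\)-homotopy \(\eta_\bullet\). Lemma~\ref{lemma:HomotopyForDeletedTensorProduct} applies directly and yields \(B\otimes D\)-linear maps \(\widetilde H_n\) with \(\Delta_{n+1}\widetilde H_n\Delta_{n+1}=\Delta_{n+1}\) for all \(n\ge -1\). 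By the equivalent characterizations of relative exactness recalled in Section~\ref{sec:Premilinaries}, this is the same as the existence of a \(B\otimes D\)-homotopy. So the augmented total complex is \((A\otimes C,B\otimes D)\)-exact, and together with the first step it is an \((A\otimes C,B\otimes D)\)-projective resolution of \(M\otimes N\).

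The main obstacle is the passage from the condition \(\Delta\widetilde H\Delta=\Delta\) to a genuine homotopy. This step holds because \(\Delta_{n+1}\widetilde H_n\) is an idempotent onto \(\operatorname{im}\Delta_{n+1}\), so \(\ker\Delta_n=\operatorname{im}\Delta_{n+1}\) is a \(B\otimes D\)-direct summand. Here acyclicity is used: it is given by the classical Künneth argument, or equivalently by the statement of Lemma~\ref{lemma:HomotopyForDeletedTensorProduct} itself. One also needs the standing assumption \(\operatorname{char}(\Bbbk)\neq 2\) of this section, which the lemma requires, and the boundary degree \(-1\), handled by \(\widetilde H_{-1}=h_{-1}\otimes\eta_{-1}\).
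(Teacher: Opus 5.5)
Your proposal is correct and follows the paper's proof. Both arguments get relative projectivity of the terms from Lemma~\ref{lemma:TensorOfTwoRelativeProjectivesIsRelativeProjective} (you also add closure under finite direct sums), and both get the \(B\otimes D\)-homotopy on the augmented total complex from Lemma~\ref{lemma:HomotopyForDeletedTensorProduct}; your step using acyclicity to turn \(\Delta\widetilde H\Delta=\Delta\) into a genuine homotopy just spells out what the paper leaves implicit.
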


\begin{proof}
    The acyclicity of the above complex and the existence of a \(B \otimes D\)-homotopy follow from Lemma~\ref{lemma:HomotopyForDeletedTensorProduct}. Moreover, by Lemma~\ref{lemma:TensorOfTwoRelativeProjectivesIsRelativeProjective}, \(\totalComplex(P_{\bullet} \otimes Q_{\bullet})\) is a complex of \((A \otimes C, B \otimes D)\)-projective modules.
\end{proof}

\subsection{Relative Dimensions}

There are several statements of the Künneth formulas, in the next result we generalize the following version found in \cite[Chap. XI, Theorem 3.1]{CE99}:
  \[
    \Tor^{A \otimes C}_{n}(X \otimes Y, M \otimes N) \cong \bigoplus_{s+r=n} \Tor_{s}^{A}(X, M) \otimes \Tor_{r}^{C}(Y, N).
  \]

\begin{theorem}[relative Künneth formula] \label{teo:RelativeKunnethFormula}
  Let \(B \subseteq A\) and \(D \subseteq C\) be extensions of \(\Bbbk\)-algebras, and let
\(M \in \catmod{A}\), \(X \in \catmod{A^{\op}}\),
\(N \in \catmod{C}\), and \(Y \in \catmod{C^{\op}}\). Then
    \begin{equation} \label{eq:RelKun}
          \Tor^{(A \otimes C, B \otimes D)}_{ n}(X \otimes Y, M \otimes N) \cong \bigoplus_{s+r=n} \Tor_{s}^{(A, B)}(X, M) \otimes \Tor_{r}^{(C, D)}(Y, N).
    \end{equation}
\end{theorem}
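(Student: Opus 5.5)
The plan is to compute both sides with explicit resolutions and reduce to the classical (absolute) Künneth formula over the field \(\Bbbk\).

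Fix deleted \((A,B)\)-projective resolutions. Let \(P_\bullet \to M\) be one for \(M\), and let \(Q_\bullet \to N\) be a deleted \((C,D)\)-projective resolution of \(N\). By Proposition~\ref{prop:RelativeResolutionForTensorProduct}, \(\totalComplex(P_\bullet\otimes Q_\bullet)\to M\otimes N\to 0\) is an \((A\otimes C,B\otimes D)\)-projective resolution. Hence the left-hand side of \eqref{eq:RelKun} is the \(n\)-th homology of
\[
(X\otimes Y)\otimes_{A\otimes C}\totalComplex(P_\bullet\otimes Q_\bullet).
\]
Relative Tor is independent of the chosen relative resolution, by the standard comparison theorem in Hochschild's relative setting \cite{Hoc56}, so this choice is legitimate.

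Next, identify this complex. Degreewise there is a natural isomorphism
\[
(X\otimes Y)\otimes_{A\otimes C}(P_i\otimes Q_j)\cong (X\otimes_A P_i)\otimes (Y\otimes_C Q_j).
\]
It is compatible with the differentials, including the sign \((-1)^i\) on the column differential. We therefore obtain an isomorphism of complexes
\[
(X\otimes Y)\otimes_{A\otimes C}\totalComplex(P_\bullet\otimes Q_\bullet)\cong \totalComplex\bigl((X\otimes_A P_\bullet)\otimes(Y\otimes_C Q_\bullet)\bigr).
\]
The right-hand side is the tensor product over \(\Bbbk\) of two complexes of \(\Bbbk\)-vector spaces. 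Their homologies are \(\Tor^{(A,B)}_s(X,M)\) and \(\Tor^{(C,D)}_r(Y,N)\) respectively, by the definition of relative Tor.

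Finally, apply the algebraic Künneth theorem over a field. Since \(\Bbbk\) is a field, all modules involved are flat over \(\Bbbk\) and the Tor\(^{\Bbbk}_1\) correction term vanishes. This gives
\[
H_n\bigl(\totalComplex(U_\bullet\otimes V_\bullet)\bigr)\cong\bigoplus_{s+r=n}H_s(U_\bullet)\otimes H_r(V_\bullet)
\]
for \(U_\bullet=X\otimes_A P_\bullet\) and \(V_\bullet=Y\otimes_C Q_\bullet\), which is exactly \eqref{eq:RelKun}. The case \(s=r=0\) is consistent with right exactness: \(\Tor_0\) is just \(X\otimes_A M\), and \((X\otimes Y)\otimes_{A\otimes C}(M\otimes N)\cong(X\otimes_AM)\otimes(Y\otimes_CN)\).

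The main obstacle is the first step. The conclusion depends on \(\totalComplex(P_\bullet\otimes Q_\bullet)\to M\otimes N\to0\) being a genuine relative resolution, that is, admitting a \(B\otimes D\)-homotopy. This is supplied by Lemma~\ref{lemma:HomotopyForDeletedTensorProduct} and Proposition~\ref{prop:RelativeResolutionForTensorProduct}, which is why the standing assumption \(\operatorname{char}\Bbbk\neq2\) enters. The remaining care point is checking that the degreewise isomorphism respects the signs of the total differential. This is routine, since the sign sits only on the \(\Bbbk\)-linear factor.
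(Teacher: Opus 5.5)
Your proposal is correct and follows the paper's proof step for step. Both arguments take the relative resolution \(\totalComplex(P_\bullet\otimes Q_\bullet)\to M\otimes N\to 0\) from Proposition~\ref{prop:RelativeResolutionForTensorProduct}, identify \((X\otimes Y)\otimes_{A\otimes C}\totalComplex(P_\bullet\otimes Q_\bullet)\) with \(\totalComplex\bigl((X\otimes_A P_\bullet)\otimes(Y\otimes_C Q_\bullet)\bigr)\), and finish with the K\"unneth theorem for complexes over a field (the paper cites \cite[Chap. IV, Theorem 7.2]{CE99}), while your remarks on independence of the resolution and on sign compatibility only make explicit what the paper leaves implicit.
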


\begin{proof}
  Let \(P_{\bullet} \longrightarrow M \longrightarrow 0\) and \(Q_{\bullet} \longrightarrow N \longrightarrow 0 \) be \((A, B)\) and \((C, D)\)-projective resolutions, respectively. By Proposition \ref{prop:RelativeResolutionForTensorProduct},
    \[
      \begin{tikzcd}
        {\totalComplex(P_{\bullet} \otimes Q_{\bullet})} \arrow[r, "\Delta_0"] & {M\otimes N} \arrow[r] & {0}
      \end{tikzcd}
    \]
  is a \((A \otimes C, B \otimes D)\)-projective resolution for \(M \otimes N\). Recall that
    \[
      \left(X \otimes Y \right) \otimes_{A \otimes C} \totalComplex\left(P_{\bullet} \otimes Q_{\bullet}\right) \cong \totalComplex\left( \left(X \otimes_A M\right) \otimes \left( Y \otimes_C N \right) \right),
    \]
  therefore, the following computes the relative Tor groups:
    \begin{align*}
      \Tor^{(A \otimes C, B \otimes D)}_{n}(X \otimes Y, M \otimes N) &\cong H_n \left( \totalComplex\left( \left(X \otimes_A P_{\bullet} \right) \otimes \left( Y \otimes_C Q_{\bullet} \right) \right) \right)\\
        & \cong (\totalComplex\left( H(X \otimes_A P_{\bullet})\otimes H(Y \otimes_C Q_{\bullet})\right)_n \\
        &\cong \bigoplus_{s+r=n} \Tor_{s}^{(A, B)}(X, M) \otimes \Tor_{r}^{(C, D)}(Y, N),
    \end{align*}
  where the second isomorphism follows from \cite[Chap. IV, Theorem 7.2]{CE99}.
\end{proof}

\begin{rmk}
  In \cite{SO21}, the author develops a relative version of the classical Künneth theorem; see \cite[Theorem 3.44]{SO21}. While that result is expressed in terms of a short exact sequence, it is possible to obtain an isomorphism between the relevant relative Tor groups similar to \eqref{eq:RelKun}. To establish said isomorphism, it is necessary to construct a homotopy for the total complex of the tensor product of two complexes, which is precisely what we do in Proposition~\ref{prop:RelativeResolutionForTensorProduct}.
\end{rmk}

\begin{theorem} \label{teo:ProjDimensionOfTensorOfModules}
  Let \(B \subseteq A\) and \(D \subseteq C\) be two extensions of \(\Bbbk\)-algebras. If \(M \in \catmod{A}\) and \(N \in \catmod{C}\), then
    \[
      \pd_{(A \otimes C, B \otimes D)} M \otimes N = \pd_{(A, B)} M + \pd_{(C, D)}N.
    \]
\end{theorem}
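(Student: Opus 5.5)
We prove the two inequalities separately. For ``\(\leqslant\)'', assume both dimensions on the right are finite, say \(p=\pd_{(A,B)}M\) and \(q=\pd_{(C,D)}N\); otherwise there is nothing to prove. Choose \((A,B)\)- and \((C,D)\)-projective resolutions \(P_\bullet\to M\) and \(Q_\bullet\to N\) of lengths \(p\) and \(q\). By Proposition~\ref{prop:RelativeResolutionForTensorProduct}, \(\totalComplex(P_\bullet\otimes Q_\bullet)\to M\otimes N\to 0\) is an \((A\otimes C,B\otimes D)\)-projective resolution. Its degree \(s\) component \(\bigoplus_{i+j=s}P_i\otimes Q_j\) vanishes for \(s>p+q\). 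Hence \(\pd_{(A\otimes C,B\otimes D)}M\otimes N\leqslant p+q\).

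For ``\(\geqslant\)'' we use \eqref{eq:PdAsTorZero} together with the relative Künneth formula, Theorem~\ref{teo:RelativeKunnethFormula}. Suppose first that \(p\) and \(q\) are finite. By \eqref{eq:PdAsTorZero} there exist right modules \(X\in\catmod{A^{\op}}\) and \(Y\in\catmod{C^{\op}}\) with \(\Tor^{(A,B)}_p(X,M)\neq 0\) and \(\Tor^{(C,D)}_q(Y,N)\neq 0\). Then \eqref{eq:RelKun} shows that \(\Tor^{(A\otimes C,B\otimes D)}_{p+q}(X\otimes Y,M\otimes N)\) contains the summand \(\Tor_p^{(A,B)}(X,M)\otimes\Tor_q^{(C,D)}(Y,N)\). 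This summand is nonzero, because a tensor product over the field \(\Bbbk\) of nonzero vector spaces is nonzero. Therefore \(\pd_{(A\otimes C,B\otimes D)}M\otimes N\geqslant p+q\).

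If one of the dimensions is infinite, say \(p=\infty\), the same argument still applies. Since \(N\neq 0\), we have \(\Tor_0^{(C,D)}(C,N)=N\neq 0\). For every \(n\) there is some \(s\geqslant n\) and some \(X\) with \(\Tor_s^{(A,B)}(X,M)\neq 0\). Taking \(Y=C\) and \(r=0\), the Künneth formula produces nonzero relative Tor in arbitrarily high degrees. Hence the left side is infinite. The degenerate cases \(M=0\) or \(N=0\) are handled by the convention that \(\pd 0\) is \(-\infty\), or \(0\); both sides then agree trivially.

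The main obstacle is the exact form of \eqref{eq:PdAsTorZero}. We need that \(\pd_{(A,B)}M=p\) finite yields \(\Tor_p^{(A,B)}(X,M)\neq 0\) for some \emph{finitely generated} right module \(X\), not merely some arbitrary module. We also need the tensor \(X\otimes Y\) to be an admissible test module for the \(\sup\) on the left side. Both points follow from \eqref{eq:PdAsTorZero} as cited from \cite{IMP26}, since \(X\otimes Y\) is a finitely generated \((A\otimes C)^{\op}\)-module. We also rely on the standing assumption \(\operatorname{char}\Bbbk\neq 2\), because it is needed for Proposition~\ref{prop:RelativeResolutionForTensorProduct}.
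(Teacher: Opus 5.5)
Your proof is correct and is essentially the paper's argument, which obtains the theorem in one line from Theorem~\ref{teo:RelativeKunnethFormula} and \eqref{eq:PdAsTorZero}; your explicit upper bound via the length-\((p+q)\) resolution of Proposition~\ref{prop:RelativeResolutionForTensorProduct} usefully spells out a step the paper leaves implicit, since the K\"unneth formula by itself only computes relative Tor against test modules of the form \(X\otimes Y\), not against all right \(A\otimes C\)-modules. The one slip is your aside on zero modules: with the convention \(\pd 0=0\) the formula fails when exactly one of \(M,N\) is zero and the other has positive relative projective dimension, so the statement should simply be read with \(M,N\neq 0\).
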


\begin{proof}
  It follows directly from Theorem \ref{teo:RelativeKunnethFormula} and equation \ref{eq:PdAsTorZero}.
\end{proof}

\begin{corollary} \label{cor:GldimTensorProductBoundProjDim}
  Let \(B \subseteq A\) and \(D \subseteq C\) be two extensions, then
    \[
      \gldim(A \otimes C, B \otimes D) \leqq \pd_{(A^{e}, B^{e})} A + \pd_{(C^{e}, D^{e})} C.
    \]
\end{corollary}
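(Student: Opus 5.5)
The plan is to combine the bound \eqref{eq:GlrelPDrel} with the relative projective dimension formula for tensor products, Theorem~\ref{teo:ProjDimensionOfTensorOfModules}, applied to regular bimodules. Applying \eqref{eq:GlrelPDrel} to the extension \(B\otimes D\subseteq A\otimes C\) gives
\[
  \gldim(A\otimes C,B\otimes D)\leqslant \pd_{((A\otimes C)^{e},(B\otimes D)^{e})}(A\otimes C).
\]
So it suffices to compute the right-hand side.

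First, identify the enveloping algebras. The standard twist isomorphism gives
\[
  (A\otimes C)^{e}=A\otimes C\otimes A^{\op}\otimes C^{\op}\cong A^{e}\otimes C^{e}.
\]
It restricts to an isomorphism \((B\otimes D)^{e}\cong B^{e}\otimes D^{e}\) compatible with the inclusions. Under this identification, the regular bimodule \(A\otimes C\) corresponds to the tensor product \(A\otimes C\) of the \(A^{e}\)-module \(A\) with the \(C^{e}\)-module \(C\). Since relative projectivity and relative exactness are transported along isomorphisms of extensions, the relative projective dimension is unchanged.

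Next, apply Theorem~\ref{teo:ProjDimensionOfTensorOfModules} to the extensions \(B^{e}\subseteq A^{e}\) and \(D^{e}\subseteq C^{e}\), with \(M=A\) and \(N=C\). This yields
\[
  \pd_{(A^{e}\otimes C^{e},\,B^{e}\otimes D^{e})}(A\otimes C)=\pd_{(A^{e},B^{e})}A+\pd_{(C^{e},D^{e})}C,
\]
and combining this with the first display gives the claimed inequality. If one only wants the inequality, the \(\leqslant\) half of the theorem is enough. That half comes directly from Proposition~\ref{prop:RelativeResolutionForTensorProduct}: tensor relative resolutions of \(A\) and \(C\) of minimal lengths and take the total complex.

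The main point needing care is the compatibility of the twist isomorphism with the subalgebras and with the module structures. One must check that \((B\otimes D)^{e}\) maps onto \(B^{e}\otimes D^{e}\) inside \(A^{e}\otimes C^{e}\). This holds because the isomorphism only permutes tensor factors. The standing hypothesis \(\operatorname{char}\Bbbk\neq 2\) is needed to invoke Theorem~\ref{teo:ProjDimensionOfTensorOfModules}. Everything else is formal.
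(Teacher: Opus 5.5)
Your proof is correct and is essentially the paper's argument: you bound \(\gldim(A\otimes C,B\otimes D)\) by the relative projective dimension of the regular bimodule via \eqref{eq:GlrelPDrel}, then split that dimension using Theorem~\ref{teo:ProjDimensionOfTensorOfModules}. The only difference is the order of steps. The paper keeps the subalgebra \(B\otimes D\otimes A^{\op}\otimes C^{\op}\), applies the theorem to \(B\otimes A^{\op}\subseteq A^{e}\) and \(D\otimes C^{\op}\subseteq C^{e}\), and only then passes to \(\pd_{(A^{e},B^{e})}A\) and \(\pd_{(C^{e},D^{e})}C\); you make that replacement once, for \(B\otimes D\subseteq A\otimes C\), and then split over \(B^{e}\subseteq A^{e}\) and \(D^{e}\subseteq C^{e}\), also making explicit the twist isomorphism that the paper leaves implicit.
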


\begin{proof}
  Due to the following
    \begin{align*}
      \gldim(A \otimes C, B \otimes D) & \leqq \pd_{\left((A\otimes C)^{e}, B \otimes D \otimes A^{\op} \otimes C^{\op}\right)} A \otimes C \\
        &= \pd_{(A^{e}, B \otimes A^{\op})} A + \pd_{(C^{e}, D \otimes C^{\op})} C, \ \textnormal{by Theorem \ref{teo:ProjDimensionOfTensorOfModules}} \\
        &= \pd_{(A^{e}, B^{e})} A + \pd_{(C^{e}, D^{e})} C.
    \end{align*}
\end{proof}

\begin{definition}\label{def:GeneralizableExtensions}
An extension \(B \subseteq A\) will be called \emph{homologically controllable} if \(AJ(B) \lhd A\), the extension is controllable, and \(\gldim(A,B) = \pd_{(A^{e},B^{e})} A.\)
\end{definition}

\begin{theorem} \label{teo:ComputesGlobalDimensionOfTensorProductOfExtensions}
  Let \(B \subseteq A\) and \(D \subseteq C\) be two homologically controllable extensions. Then \(B \otimes D \subseteq A \otimes C\) is homologically controllable, and
    \[
      \gldim(A \otimes C, B \otimes D) = \gldim(A, B) + \gldim(C, D).
    \]
\end{theorem}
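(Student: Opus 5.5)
We sandwich $\gldim(A\otimes C,B\otimes D)$ between a lower bound coming from the quotient $\quotient{(A\otimes C)}{(B\otimes D)}$ and the upper bound of Corollary~\ref{cor:GldimTensorProductBoundProjDim}.

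\emph{Step 1: the ideal condition and the quotient.} First we check that $(A\otimes C)J(B\otimes D)$ is a two-sided ideal. Since $J(B\otimes D)=J(B)\otimes D+B\otimes J(D)$ (here $\Bbbk$ is perfect), we get
\[
(A\otimes C)J(B\otimes D)=AJ(B)\otimes C+A\otimes CJ(D).
\]
Each summand is two-sided because $AJ(B)\lhd A$ and $CJ(D)\lhd C$. The same description gives
\[
(A\otimes C)J(B\otimes D)(A\otimes C)=AJ(B)A\otimes C+A\otimes CJ(D)C,
\]
and therefore
\[
\quotient{(A\otimes C)}{(B\otimes D)}\cong(\quotient AB)\otimes(\quotient CD).
\]

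\emph{Step 2: the lower bound.} By Remark~\ref{rmk:QuocientCondition} and Corollary~\ref{cor:CategoricalControllableInequality}, combined with the classical formula for the global dimension of a tensor product over a perfect field \cite[Theorem~16]{Aus55} and the controllability of both extensions, we obtain
\[
\gldim(A\otimes C,B\otimes D)\ \ge\ \gldim(\quotient AB)+\gldim(\quotient CD)=\gldim(A,B)+\gldim(C,D).
\]

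\emph{Step 3: the upper bound.} Corollary~\ref{cor:GldimTensorProductBoundProjDim} and the homological controllability hypothesis give
\[
\gldim(A\otimes C,B\otimes D)\le \pd_{(A^e,B^e)}A+\pd_{(C^e,D^e)}C=\gldim(A,B)+\gldim(C,D).
\]
Together with Step~2 this yields the displayed equality. It also shows that $B\otimes D\subseteq A\otimes C$ is controllable, since the middle quantities in Step~2 are exactly $\gldim$ of the quotient algebra.

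\emph{Step 4: the projective dimension of the regular bimodule.} It remains to verify $\gldim(A\otimes C,B\otimes D)=\pd_{((A\otimes C)^e,(B\otimes D)^e)}A\otimes C$. Identify $(A\otimes C)^e\cong A^e\otimes C^e$ and $(B\otimes D)^e\cong B^e\otimes D^e$, under which the regular bimodule corresponds to $A\otimes C$. Theorem~\ref{teo:ProjDimensionOfTensorOfModules} applied to the extensions $B^e\subseteq A^e$ and $D^e\subseteq C^e$ then gives
\[
\pd_{((A\otimes C)^e,(B\otimes D)^e)}A\otimes C=\pd_{(A^e,B^e)}A+\pd_{(C^e,D^e)}C=\gldim(A,B)+\gldim(C,D),
\]
which equals $\gldim(A\otimes C,B\otimes D)$ by Step~3.

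The main obstacle is Step~1: the description of $J(B\otimes D)$ and of the generated ideal, and hence the identification of the quotient algebra with $(\quotient AB)\otimes(\quotient CD)$, depends on the perfect-field hypothesis. We also need care with the bookkeeping of the identification $(A\otimes C)^e\cong A^e\otimes C^e$ in Step~4. The remaining steps are direct citations of earlier results.
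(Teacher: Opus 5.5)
Your proposal is correct and follows essentially the paper's own argument. The lower bound comes from Corollary~\ref{cor:CategoricalControllableInequality} applied to $B\otimes D\subseteq A\otimes C$, using that $(A\otimes C)J(B\otimes D)$ is two-sided with quotient $(\quotient AB)\otimes(\quotient CD)$, together with Auslander's tensor formula and controllability. The upper bound, and its identification with $\pd_{((A\otimes C)^e,(B\otimes D)^e)}(A\otimes C)$, come from Corollary~\ref{cor:GldimTensorProductBoundProjDim}, Theorem~\ref{teo:ProjDimensionOfTensorOfModules} and the homological controllability hypothesis; your Step~1 only spells out the ideal and quotient computations that the paper states without detail.
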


\begin{proof}
  For convenience, write \(F = B \otimes D\) and \(E = A \otimes C\). Since
\(AJ(B) \lhd A\) and \(CJ(D) \lhd C\), we have
\(EJ(F) \lhd E\). In particular, by Corollary \ref{cor:CategoricalControllableInequality},
    \[
      \gldim \left( \quotient{E}{F} \right) \leqq \gldim(E, F).
    \]
  But in this case
    \[
      \quotient{E}{F} = \frac{E}{EJ(F)} \cong \frac{A}{AJ(B)} \otimes \frac{C}{CJ(D)} = (\quotient{A}{B}) \otimes (\quotient{C}{D})
    \]
  and
    \[
      \gldim \left( \quotient{E}{F} \right) = \gldim \left( \quotient{A}{B} \right) + \gldim \left( \quotient{C}{D} \right),
    \]
  by \cite[Theorem 16]{Aus55}. Since both extensions are controllable, the above becomes
    \[
      \gldim(A, B) + \gldim(C, D) \leqq \gldim(E, F).
    \]
  On the other hand, by Corollary~\ref{cor:GldimTensorProductBoundProjDim}
and Theorem~\ref{teo:ProjDimensionOfTensorOfModules},
\begin{align*}
  \gldim(E,F)
  &\leq \pd_{(E^e,F^e)} E \\
  &= \pd_{(A^e,B^e)} A + \pd_{(C^e,D^e)} C \\
  &= \gldim(A,B) + \gldim(C,D),
\end{align*}
where the last equality follows from the hypothesis. Returning to the original notation \(A\otimes C = E\) and \(B\otimes D = F\), we obtain
\[
  \gldim(A\otimes C,B\otimes D)
  =
  \pd_{((A\otimes C)^e,(B\otimes D)^e)}(A\otimes C)
  =
  \gldim(A,B)+\gldim(C,D).
\]
\end{proof}

\begin{example}
  Any extension \(B \subseteq A\) such that \(J(B) \lhd A\) is homologically controllable by Theorem~\ref{teo:BilateralIdealIsControllable} and its proof, regardless of the characteristic of \(\Bbbk\).
\end{example}

\begin{example}
  The extensions \(A \subseteq A\) and \(\Bbbk \subseteq A\) are homologically controllable. In particular, let \(B\) be a non-semisimple algebra and let \(A\) be arbitrary. Applying Theorem~\ref{teo:ComputesGlobalDimensionOfTensorProductOfExtensions} to the extensions
  \[
    \Bbbk \subseteq A
    \qquad\text{and}\qquad
    B \subseteq B,
  \]
  we obtain
  \[
    \gldim(A \otimes B,B)=\gldim(A).
  \]
  Thus, \(B \subseteq A \otimes B\) is a nontrivial extension, and \(A\) can be chosen so that \(\gldim(A)=n \) for any non-negative integer \(n\).
\end{example}

\begin{lemma}\label{lemma:pdBimoduleIsSymmetrical}
  Let \(B \subseteq A\) be an extension of \(\Bbbk\)-algebras. Then
  \[
    \pd_{(A^{e}, B^{e})} A
    =
    \pd_{((A^{\op})^{e}, (B^{\op})^{e})} A^{\op}.
  \]
\end{lemma}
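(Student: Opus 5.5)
The idea is to transport everything along a canonical algebra isomorphism that identifies the two relative settings. The enveloping algebra of \(A^{\op}\) is
\[
(A^{\op})^{e}=A^{\op}\otimes (A^{\op})^{\op}=A^{\op}\otimes A .
\]
The flip \(\tau\colon A\otimes A^{\op}\to A^{\op}\otimes A\), \(a\otimes b\mapsto b\otimes a\), is an isomorphism of \(\Bbbk\)-algebras. It restricts to the analogous flip \(B\otimes B^{\op}\to B^{\op}\otimes B\), so it carries the pair \((A^{e},B^{e})\) isomorphically onto the pair \(((A^{\op})^{e},(B^{\op})^{e})\).

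\textbf{Step 1.} Check that restriction of scalars along \(\tau^{-1}\) sends the regular bimodule \(A\) to \(A^{\op}\). A left \(A^{e}\)-module structure on \(A\) is \((a\otimes b)\cdot x=axb\). The twisted action of \(b\otimes a\in A^{\op}\otimes A\) is \(axb\), which, read in \(A^{\op}\) with multiplication \(\ast\), equals \(b\ast x\ast a\). This is exactly the regular \(A^{\op}\)-bimodule action. So the identity map of the underlying space is an isomorphism \(\tau^{*}(A^{\op})\cong A\) of \(A^{e}\)-modules.

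\textbf{Step 2.} Observe that an algebra isomorphism \(\tau\colon \Lambda\to\Lambda'\) with \(\tau(\Gamma)=\Gamma'\) induces an equivalence \(\tau^{*}\colon\catmod{\Lambda'}\to\catmod{\Lambda}\). This equivalence preserves all the relative structure:
\begin{itemize}
\item it sends induced modules \(\Lambda'\otimes_{\Gamma'}X\) to induced modules \(\Lambda\otimes_{\Gamma}\tau^{*}X\), so it preserves \(\mathcal{P}(-,-)\) by characterization (ii) of relative projectives;
\item it sends \(\Gamma'\)-homotopies to \(\Gamma\)-homotopies, since they are the same linear maps, now \(\Gamma\)-linear via \(\tau\).
\end{itemize}
Hence relative projective resolutions correspond bijectively and with equal lengths, and
\[
\pd_{(\Lambda,\Gamma)}\tau^{*}M=\pd_{(\Lambda',\Gamma')}M .
\]

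\textbf{Step 3.} Apply Step 2 with \(\Lambda=A^{e}\), \(\Gamma=B^{e}\), \(M=A^{\op}\), and combine with Step 1 to get the claimed equality.

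No step is a real obstacle. The only point needing care is Step 1, namely getting the order of factors and the opposite multiplication right so that the twisted module really is the regular bimodule and not some twist of it. Alternatively, one could use \eqref{eq:PdAsTorZero} and transport the relative Tor groups along \(\tau\), but the direct equivalence argument is cleaner.
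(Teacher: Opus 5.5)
Your proof is correct and follows the paper's own argument. You use the flip isomorphism \(A^{e}\to (A^{\op})^{e}\), which restricts to \(B^{e}\to (B^{\op})^{e}\), and the induced equivalence, which preserves relative projectives and relative projective dimension and carries the regular bimodule \(A\) to \(A^{\op}\); your Steps 1 and 2 simply write out the checks the paper leaves implicit.
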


\begin{proof}
  Consider the twist map
  \[
    \tau \colon
    A^{e}=A\otimes_{\Bbbk}A^{\op}
    \longrightarrow
    A^{\op}\otimes_{\Bbbk}A
    \cong
    (A^{\op})^{e},
    \qquad
    a\otimes a' \longmapsto a'\otimes a.
  \]
  This is an isomorphism of \(\Bbbk\)-algebras that restricts to an isomorphism
  \(B^{e} \cong (B^{\op})^{e}.
  \)
  Consequently, the induced equivalence
  \[
    \catmod{A^{e}}
    \simeq
    \catmod{(A^{\op})^{e}}
  \]
  identifies relative projective modules and preserves relative projective dimensions. Since the \(A^{e}\)-module \(A\) is sent to the \((A^{\op})^{e}\)-module \(A^{\op}\), the desired equality follows.
\end{proof}

\begin{corollary} \label{cor:GldimEnveloping2timesGldimExtension}
  Let \(B \subseteq A\) be a controllable extension of \(\Bbbk\)-algebras such that
  \[
    \gldim(A,B)
    =
    \pd_{(A^e,B^e)} A
  \]
  and \(AJ(B)=J(B)A\). Then
  \[
    \gldim(A^e,B^e)=2\gldim(A,B).
  \]
\end{corollary}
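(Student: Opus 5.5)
The plan is to apply Theorem~\ref{teo:ComputesGlobalDimensionOfTensorProductOfExtensions} to the two extensions \(B \subseteq A\) and \(B^{\op} \subseteq A^{\op}\). Their tensor product is the extension \(B^{e} = B \otimes B^{\op} \subseteq A \otimes A^{\op} = A^{e}\). The theorem then gives
\[
  \gldim(A^e,B^e)=\gldim(A,B)+\gldim(A^{\op},B^{\op})=2\gldim(A,B),
\]
where the last equality is \eqref{eq:GldimOpposite}. So everything reduces to checking that both \(B\subseteq A\) and \(B^{\op}\subseteq A^{\op}\) are homologically controllable in the sense of Definition~\ref{def:GeneralizableExtensions}.

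For \(B\subseteq A\) we check the three conditions directly.
\begin{enumerate}
  \item It is controllable by hypothesis.
  \item It satisfies \(\gldim(A,B)=\pd_{(A^e,B^e)}A\) by hypothesis.
  \item \(AJ(B)\lhd A\): this holds because \(AJ(B)=J(B)A\), so \(AJ(B)\) is closed under right multiplication by \(A\) (as \(J(B)A\) is a right ideal) and is already a left ideal.
\end{enumerate}

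For the opposite extension, the three conditions transfer as follows.
\begin{enumerate}
  \item Controllability passes to \(B^{\op}\subseteq A^{\op}\), as remarked right after the definition of controllable extensions via \eqref{eq:GldimOpposite}.
  \item The ideal condition becomes \(A^{\op}J(B^{\op})=(J(B)A)^{\op}\), using \(J(B^{\op})=J(B)\) as sets. The hypothesis \(AJ(B)=J(B)A\) is symmetric, so this is again a two-sided ideal of \(A^{\op}\).
  \item The dimension condition follows from Lemma~\ref{lemma:pdBimoduleIsSymmetrical}:
  \[
    \pd_{((A^{\op})^e,(B^{\op})^e)}A^{\op}=\pd_{(A^e,B^e)}A=\gldim(A,B)=\gldim(A^{\op},B^{\op}),
  \]
  where the middle equality is the hypothesis and the last is \eqref{eq:GldimOpposite}.
\end{enumerate}

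The main point needing care is identifying the tensor product extension \(B\otimes B^{\op}\subseteq A\otimes A^{\op}\) with \(B^e\subseteq A^e\). This is immediate from the definitions. We must also respect the standing assumption \(\operatorname{char}\Bbbk\neq 2\), which Theorem~\ref{teo:ComputesGlobalDimensionOfTensorProductOfExtensions} requires. Beyond this there is no real obstacle: the corollary is a formal consequence once the opposite extension is verified to be homologically controllable, and the symmetry hypothesis \(AJ(B)=J(B)A\) is exactly what makes the ideal condition survive passage to opposites.
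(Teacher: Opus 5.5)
Your proposal is correct and matches the paper's proof. You show that both \(B\subseteq A\) and \(B^{\op}\subseteq A^{\op}\) are homologically controllable, using \(AJ(B)=J(B)A\) for the ideal condition on both sides and Lemma~\ref{lemma:pdBimoduleIsSymmetrical} with \eqref{eq:GldimOpposite} for the dimension condition, and then you conclude via Theorem~\ref{teo:ComputesGlobalDimensionOfTensorProductOfExtensions} and \eqref{eq:GldimOpposite}; your explicit checks of the opposite-side conditions just spell out what the paper asserts in one line.
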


\begin{proof}
  The equality \(AJ(B)=J(B)A\) implies that \(AJ(B)\lhd A\) and
  \(A^{\op}J(B^{\op})\lhd A^{\op}\). Together with the hypothesis
  \[
    \gldim(A,B)=\pd_{(A^e,B^e)}A
  \]
  and Lemma~\ref{lemma:pdBimoduleIsSymmetrical}, this shows that both
  \(B\subseteq A\) and \(B^{\op}\subseteq A^{\op}\) are homologically controllable. The result now follows from Theorem~\ref{teo:ComputesGlobalDimensionOfTensorProductOfExtensions} and Equation~\ref{eq:GldimOpposite}.
\end{proof}

\begin{example}
  In the extensions from Example~\ref{ex:ProjBoundedArbitraryRelativeGlobalDimension}, \(J(B)\) is neither a left nor a right ideal of \(A\), although
  \[
    AJ(B)=J(B)A.
  \]
  Consequently, Corollary~\ref{cor:GldimEnveloping2timesGldimExtension} applies and yields examples of homologically controllable extensions satisfying
  \[
    \gldim(A^e,B^e)=2n
  \]
  for every \(n\in\mathbb{N}\).
\end{example}
\section{Non Controllable Extensions} \label{sec:NonControllableExtensions}

The final topic of this work is the study of extensions satisfying the controllable inequality that are not controllable. The constructions presented below are based on the following example.

\begin{example}\label{ex:NonControllableExtension}
  Let \(A\) and \(B\) be the path algebras of the quivers
    \[    
\begin{tikzpicture}[x=.85cm,y=.85cm]

  \node (Q) at (-0.5,0) {\(Q:\)};
  \node (1) at (0,0) {\(1\)};
  \node (2) at (\triangleHeight,\triangleSize/2) {\(2\)};
  \node (3) at (\triangleHeight,-\triangleSize/2) {\(3\)};
  \draw[->] (1) -- (2)
    node[midway, above] {\footnotesize \(\alpha\)};
  \draw[->] (1) -- (3)
    node[midway, below] {\footnotesize \(\beta\)};

  \node (and) at (1.5*\triangleHeight,0) {and};
  \node (R) at (2*\triangleHeight,0) {\(R:\)};
  \node (B1) at (2*\triangleHeight + 0.5,0) {\(1\)};
  \node (B2) at (2*\triangleHeight + \triangleHeight + 0.5,0) {\(2+3,\)};

  \draw[->] (B1) -- (B2)
    node[midway, above] {\footnotesize \(\alpha+\beta\)};

\end{tikzpicture}
    \]
  respectively. Then \(AJ(B)=J(A) \lhd A\), \(\quotient{A}{B}\) is semisimple, and
\[
  \gldim(A, B) = \pd_{(A, B)} M = 1,
\]
where \(M\) is the indecomposable \(A\)-module supported on the vertices \(1\) and \(2\); see \cite[Example 5.5.3]{Pri23} for details.
\end{example}

\begin{rmk}
  By Corollary \ref{cor:CategoricalControllableInequality}, we have, in the above example, \(\pd_{(A, B)} S = 0\) for all simple \(A\)-modules \(S\). This provides a instance of an extension whose global dimension is not determined by simple modules.
\end{rmk}

In light of the above example, it is natural to study how large the difference
\[
  N = \gldim(A, B) - \gldim \left( \quotient{A}{B} \right)
\]
can be. It turns out that \(N\) can be any non-negative integer. This can be described in terms of the ordinary quivers of the algebras in the extensions. In Figure \ref{figure:ArbitraryGapConstruction}, the construction exhibits a cyclic pattern, where “opening” and “closing” of triangles, in a zigzag, are applied sequentially starting from Example \ref{ex:NonControllableExtension}. Regarding relations, all algebras are radical square zero. For instance, in Figure \ref{subfigure:GapOfOne} no relations are imposed, while in  Figure \ref{subfigure:GapOfTwo} the relations \(\gamma\alpha\) and \(\delta\beta\), as well as \((\gamma + \delta)(\alpha + \beta)\), are imposed for \(A_2\) and \(B_2\), respectively. Each step of this construction increases the relative global dimension by one compared to the previous extension, and since each quotient \(\quotient{A}{B}\) is semisimple, this increase is reflected in the gap \(N\).
  \begin{figure}[H]
    \centering
      \begin{subfigure}{0.32\textwidth}
        \centering
          \begin{tikzpicture}[scale=0.9, font=\footnotesize]
            \node (A1) at (-.6, 0) {\small \(A_1\hspace{-2pt}:\)};
            \node (11) at (0, 0) {\(1\)};
            \node (21) at (\triangleHeight, .5*\triangleSize) {\(2\)};
            \node (31) at (\triangleHeight, -.5*\triangleSize) {\(3\)};
            \draw[->] (11) -- (21) node[midway, above] {\(\alpha\)};
            \draw[->] (11) -- (31) node[midway, below] {\(\beta\)};
            \node (B1) at (-.6, -1*\triangleSize) {\small \(B_1\hspace{-2pt}:\)};
            \node (1B1) at (0, -1*\triangleSize) {\( 1\)};
            \node (23B1) at (\triangleHeight, -1*\triangleSize) {\(2+3\)};
            \draw[->] (1B1) -- (23B1) node[midway, above] {\(\alpha+\beta\)};
          \end{tikzpicture}
        \caption{\(\gldim(A_1, B_1) = 1\).}
        \label{subfigure:GapOfOne}
      \end{subfigure}\hfill
    \begin{subfigure}{0.32\textwidth}
        \begin{tikzpicture}[scale=0.85, font=\footnotesize]
          \node (A2) at (-.6, 0) {\small \(A_2\hspace{-2pt}:\)};
          \node (12) at (0, 0) {\(1\)};
          \node (22) at (\triangleHeight, .5*\triangleSize) {\(2\)};
          \node (32) at (\triangleHeight, -.5*\triangleSize) {\(3\)};
          \node (42) at (2*\triangleHeight, 0) {\(4\)};
          \draw[->] (12) -- (22) node[midway, above] {\(\alpha\)};
          \draw[->] (12) -- (32) node[midway, below] {\(\beta\)};
          \draw[->] (22) -- (42) node[midway, above] {\(\gamma\)};
          \draw[->] (32) -- (42) node[midway, below] {\(\delta\)};
          \node (B2) at (-.6, -1*\triangleSize) {\small \(B_2\hspace{-2pt}:\)};
          \node (1B2) at (0, -1*\triangleSize) {\(1\)};
          \node (23B2) at (\triangleHeight, -1*\triangleSize) {\(2+3\)};
          \node (4B2) at (2*\triangleHeight, -1*\triangleSize) {\(4\)};
          \draw[->] (1B2) -- (23B2) node[midway, above] {\(\alpha+\beta\)};
          \draw[->] (23B2) -- (4B2) node[midway, above] {\(\gamma+\delta\)};
        \end{tikzpicture}
      \caption{\(\gldim(A_2, B_2) = 2\).}
      \label{subfigure:GapOfTwo}
    \end{subfigure}\hfill
    \begin{subfigure}{0.32\textwidth}
        \begin{tikzpicture}[scale=0.85, font=\footnotesize]
          \node (A3) at (-.6, 0) {\small \(A_3\hspace{-2pt}:\)};
          \node (13) at (0, 0) {\(1\)};
          \node (23) at (\triangleHeight, .5*\triangleSize) {\(2\)};
          \node (33) at (\triangleHeight, -.5*\triangleSize) {\(3\)};
          \node (43) at (2*\triangleHeight, 0) {\(4\)};
          \node (53) at (2*\triangleHeight, \triangleSize) {\(5\)};
          \draw[->] (13) -- (23) node[midway, above] {\(\alpha\)};
          \draw[->] (13) -- (33) node[midway, below] {\(\beta\)};
          \draw[->] (23) -- (43) node[midway, above] {\(\gamma\)};
          \draw[->] (33) -- (43) node[midway, below] {\(\delta\)};
          \draw[->] (23) -- (53) node[midway, above] {\(\varepsilon\)};
          \node (B3) at (-.6, -1*\triangleSize) {\small \(B_3\hspace{-2pt}:\)};
          \node (1B3) at (0, -1*\triangleSize) {\(1\)};
          \node (23B3) at (\triangleHeight, -1*\triangleSize) {\(2+3\)};
          \node (45B3) at (2*\triangleHeight, -1*\triangleSize) {\(4+5\)};
          \draw[->] (1B3) -- (23B3) node[midway, above] {\(\alpha+\beta\)};
          \draw[->] (23B3) -- (45B3) node[midway, above] {\scriptsize \(\gamma \hspace{-2pt} + \hspace{-2pt} \delta \hspace{-2pt} + \hspace{-2pt} \varepsilon\)};
        \end{tikzpicture}
      \caption{\(\gldim(A_3, B_3) = 3\).}
      \label{subfigure:GapOfThree}
    \end{subfigure}
    \bigskip 
    \vspace{.3cm}
    \begin{subfigure}{0.32\textwidth}
        \begin{tikzpicture}[scale=0.5, font=\footnotesize]
          \node (A4) at (-.8, 0) {\small \(A_4\hspace{-2pt}:\)};
          \node (14) at (0, 0) {\(\bullet\)};
          \node (24) at (\triangleHeight, .5*\triangleSize) {\(\bullet\)};
          \node (34) at (\triangleHeight, -.5*\triangleSize) {\(\bullet\)};
          \node (44) at (2*\triangleHeight, 0) {\(\bullet\)};
          \node (54) at (2*\triangleHeight, \triangleSize) {\(\bullet\)};
          \node (64) at (3*\triangleHeight, .5*\triangleSize) {\(\bullet\)};
          \draw[->] (14) -- (24) node[midway, above] {};
          \draw[->] (14) -- (34) node[midway, below] {};
          \draw[->] (24) -- (44) node[midway, below] {};
          \draw[->] (34) -- (44) node[midway, below] {};
          \draw[->] (24) -- (54) node[midway, below] {};
          \draw[->] (44) -- (64) node[midway, below] {};
          \draw[->] (54) -- (64) node[midway, below] {};
          \node (B4) at (-.8, -1*\triangleSize) {\small \(B_4\hspace{-2pt}:\)};
          \node (1B4) at (0, -1*\triangleSize) {\(\bullet\)};
          \node (23B4) at (\triangleHeight, -1*\triangleSize) {\(\bullet\)};
          \node (45B4) at (2*\triangleHeight, -1*\triangleSize) {\(\bullet\)};
          \node (6B4) at (3*\triangleHeight, -1*\triangleSize) {\(\bullet\)};
          \draw[->] (1B4) -- (23B4) node[midway, below] {};
          \draw[->] (23B4) -- (45B4) node[midway, below] {};
          \draw[->] (45B4) -- (6B4) node[midway, below] {};
        \end{tikzpicture}
      \caption{\(\gldim(A_4, B_4) = 4\).}
    \end{subfigure}\hfill
    \begin{subfigure}{0.32\textwidth}
        \begin{tikzpicture}[scale=0.5, font=\footnotesize]
          \node (A5) at (-.8, 0) {\small \(A_5\hspace{-2pt}:\)};
          \node (15) at (0, 0) {\(\bullet\)};
          \node (25) at (\triangleHeight, .5*\triangleSize) {\(\bullet\)};
          \node (35) at (\triangleHeight, -.5*\triangleSize) {\(\bullet\)};
          \node (45) at (2*\triangleHeight, 0) {\(\bullet\)};
          \node (55) at (2*\triangleHeight, \triangleSize) {\(\bullet\)};
          \node (65) at (3*\triangleHeight, .5*\triangleSize) {\(\bullet\)};
          \node (75) at (3*\triangleHeight, -.5*\triangleSize) {\(\bullet\)};
          \draw[->] (15) -- (25) node[midway, above] {};
          \draw[->] (15) -- (35) node[midway, below] {};
          \draw[->] (25) -- (45) node[midway, below] {};
          \draw[->] (35) -- (45) node[midway, below] {};
          \draw[->] (25) -- (55) node[midway, below] {};
          \draw[->] (45) -- (65) node[midway, below] {};
          \draw[->] (55) -- (65) node[midway, below] {};
          \draw[->] (45) -- (75) node[midway, below] {};
          \node (B5) at (-.8, -1*\triangleSize) {\small \(B_5\hspace{-2pt}:\)};
          \node (1B5) at (0, -1*\triangleSize) {\(\bullet\)};
          \node (23B5) at (\triangleHeight, -1*\triangleSize) {\(\bullet\)};
          \node (45B5) at (2*\triangleHeight, -1*\triangleSize) {\(\bullet\)};
          \node (67B5) at (3*\triangleHeight, -1*\triangleSize) {\(\bullet\)};
          \draw[->] (1B5) -- (23B5) node[midway, below] {};
          \draw[->] (23B5) -- (45B5) node[midway, below] {};
          \draw[->] (45B5) -- (67B5) node[midway, below] {};
        \end{tikzpicture}
      \caption{\(\gldim(A_5, B_5) = 5\).}
    \end{subfigure}\hfill
    \begin{subfigure}{0.32\textwidth}
        \begin{tikzpicture}[scale=0.45, font=\scriptsize]
          \node (A6) at (-.9, 0) {\small  \(A_6\hspace{-2pt}:\)};
          \node (16) at (0, 0) {\(\bullet\)};
          \node (26) at (\triangleHeight, .5*\triangleSize) {\(\bullet\)};
          \node (36) at (\triangleHeight, -.5*\triangleSize) {\(\bullet\)};
          \node (46) at (2*\triangleHeight, 0) {\(\bullet\)};
          \node (56) at (2*\triangleHeight, \triangleSize) {\(\bullet\)};
          \node (66) at (3*\triangleHeight, .5*\triangleSize) {\(\bullet\)};
          \node (76) at (3*\triangleHeight, -.5*\triangleSize) {\(\bullet\)};
          \node (86) at (4*\triangleHeight, 0) {\(\bullet\)};
          \draw[->] (16) -- (26) node[midway, above] {};
          \draw[->] (16) -- (36) node[midway, below] {};
          \draw[->] (26) -- (46) node[midway, below] {};
          \draw[->] (36) -- (46) node[midway, below] {};
          \draw[->] (26) -- (56) node[midway, below] {};
          \draw[->] (46) -- (66) node[midway, below] {};
          \draw[->] (56) -- (66) node[midway, below] {};
          \draw[->] (46) -- (76) node[midway, below] {};
          \draw[->] (66) -- (86) node[midway, below] {};
          \draw[->] (76) -- (86) node[midway, below] {};
          \node (B6) at (-.9, -1*\triangleSize) {\small \(B_6\hspace{-2pt}:\)};
          \node (1B6) at (0, -1*\triangleSize) {\(\bullet\)};
          \node (23B6) at (\triangleHeight, -1*\triangleSize) {\(\bullet\)};
          \node (45B6) at (2*\triangleHeight, -1*\triangleSize) {\(\bullet\)};
          \node (67B6) at (3*\triangleHeight, -1*\triangleSize) {\(\bullet\)};
          \node (8B6) at (4*\triangleHeight, -1*\triangleSize) {\(\bullet\)};
          \draw[->] (1B6) -- (23B6) node[midway, below] {};
          \draw[->] (23B6) -- (45B6) node[midway, below] {};
          \draw[->] (45B6) -- (67B6) node[midway, below] {};
          \draw[->] (67B6) -- (8B6) node[midway, below] {};
        \end{tikzpicture}
      \caption{\(\gldim(A_6, B_6) = 6\).}
      \label{subfigure:GapOfSix}
    \end{subfigure}
    \caption{\textit{Examples of gaps from \(1\) to \(6\). The relations of all algebras above are given by the square of the radical of their associated path algebras. Each \(B_n\), viewed as a subalgebra of \(A_n\), is generated by sums of idempotents and arrows in the same vertical alignment, as indicated in the first three examples.}}
    \label{figure:ArbitraryGapConstruction}
  \end{figure}

Let \(B_n \subseteq A_n\) denote the \(n^{\text{th}}\)-step extension obtained by the above process. The following results formalize the construction.

\begin{lemma}\label{lemma:RepresentationPropertiesOfGap}
  For the extension \(B_n \subseteq A_n\), the following hold:
  \begin{enumerate}
    \item \(A_n J(B_n) A_n = J(A_n)\). In particular, \(\displaystyle \quotient{A_{n}}{B_{n}}\) is semisimple.
    \item Both \(B_n\) and \(A_n\) are of finite representation type.
  \end{enumerate}
\end{lemma}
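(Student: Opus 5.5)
For item~1, I will first fix a precise description of \(B_n\subseteq A_n\). From the figure, \(A_n=\Bbbk Q^{(n)}/J^2\) is radical square zero. The vertices of \(Q^{(n)}\) are grouped into columns \(C_0,\dots,C_n\), each column containing one or two vertices. Every arrow goes from a vertex of \(C_{k}\) to a vertex of \(C_{k+1}\). The subalgebra \(B_n\) is generated by the column idempotents \(\varepsilon_k=\sum_{v\in C_k}e_v\) and the column arrows \(\rho_k=\sum\{\text{arrows } C_k\to C_{k+1}\}\). Thus \(B_n\) is (a quotient of) the path algebra of a linearly oriented \(\mathbb{A}_{n+1}\), and \(J(B_n)\) is spanned by the \(\rho_k\) together with their products.

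The inclusion \(A_nJ(B_n)A_n\subseteq J(A_n)\) is clear, since each \(\rho_k\) lies in \(J(A_n)\). For the reverse inclusion I will show that every arrow \(a\colon u\to w\) of \(Q^{(n)}\) lies in the ideal. Because each vertex idempotent \(e_u\) lies in \(A_n\), we have \(e_w\rho_k e_u=a\) whenever \(a\) is the unique arrow from \(u\) to \(w\). Uniqueness holds by inspection of the construction: there are no multiple arrows, because the triangles are opened and closed using distinct vertices. Since \(J(A_n)\) is generated by the arrows, equality follows. Then \(\quotient{A_n}{B_n}=A_n/J(A_n)\) is semisimple.

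For item~2, \(B_n\) is a quotient of the path algebra of \(\mathbb{A}_{n+1}\) with linear orientation, so it is representation-finite by Gabriel's theorem. For \(A_n\), which is radical square zero, I will use the separated quiver criterion. A radical square zero algebra is representation-finite if and only if its separated quiver \(Q^s\) is a disjoint union of Dynkin quivers. The separated quiver has vertices \(v^+\) and \(v^-\), with one arrow \(u^+\to w^-\) for each arrow \(u\to w\).

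Since each vertex in the zigzag construction has at most two outgoing and at most two incoming arrows, and the underlying graph is a chain of "triangles", I expect \(Q^s\) to be a disjoint union of type-\(\mathbb{A}\) graphs. Following the figure, the sources \(u^+\) with two outgoing arrows are glued to the sinks \(w^-\) with two incoming arrows in a path-like fashion. I will verify this uniformly by induction on \(n\). Passing from \(A_n\) to \(A_{n+1}\) adds one vertex and one or two arrows attached to the last column. In \(Q^s\) this extends one existing \(\mathbb{A}\)-component at an endpoint, or creates new isolated vertices, and never creates a branch point or a cycle.

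The main obstacle is making the inductive bookkeeping of the separated quiver rigorous. In particular I must check that no vertex \(v^\pm\) ever acquires degree \(3\), and that no cycle appears: the square \(1\to2,3\to4\) yields the path \(1^+-2^-\) and \(1^+-3^-\), and the arrows into \(4^-\) come from \(2^+\) and \(3^+\), which are distinct vertices of \(Q^s\). If this gets messy, I would instead exhibit \(Q^s\) explicitly for the two alternating step types, opening and closing, and check the pattern directly.
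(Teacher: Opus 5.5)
Your proposal follows the paper's proof. Item~1 is the paper's observation that each arrow \(x\colon u\to w\) spans \(e_wJ(A_n)e_u\) and equals \(e_w\rho_k e_u\in A_nJ(B_n)A_n\), and item~2 uses the Gabriel/Kruglyak separated-quiver criterion, which the paper only cites while you also plan to verify the Dynkin condition. One small slip: there are \(\lfloor n/2\rfloor+2\) columns, not \(n+1\) (e.g.\ \(A_3\) has columns \(\{1\},\{2,3\},\{4,5\}\)), and the cleanest check is that every non-trivial component of \(Q^s\) is the bipartite graph on \(C_k^+\sqcup C_{k+1}^-\), which has at most four vertices and three edges and is therefore of type \(\mathbb{A}_{\le 4}\).
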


\begin{proof}
  For each arrow \(x \in A_n\), there exist idempotents \(e, f \in A_n\) such that \(fJ(A_n)e = \langle x \rangle\). Moreover, \(x\) is a summand of an arrow of \(B_n\), hence
  \[
    x \in \langle x \rangle = fJ(B_n)e \subseteq A_n J(B_n) A_n,
  \]
  which proves the first assertion.

  The second assertion follows from \cite[Theorem 1]{Kru75} or \cite{Gab72}.
\end{proof}

\begin{lemma}\label{lemma:GapRelativeProjectiveModules}
  For the extensions \(B_n \subseteq A_n\), the following hold:
  \begin{enumerate}
    \item The relatively projective simple \(A_n\)-modules are \(S(1)\) and those supported on vertices \(e\) such that \(S(e)=A_n e = P(e)\).
    \item A two-dimensional indecomposable \(A_n\)-module is \((A_n,B_n)\)-projective if and only if it is \(A_n\)-projective.
    \item All three-dimensional indecomposable \(A_n\)-modules are relatively projective.
    \item No four-dimensional indecomposable \(A_n\)-module is relatively projective.
  \end{enumerate}
\end{lemma}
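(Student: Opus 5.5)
The criterion I will use throughout is (i) of the definition of relative projectivity: \(M\) is \((A_n,B_n)\)-projective if and only if the multiplication map \(\mu_M\colon A_n\otimes_{B_n}M\to M\) splits as a map of \(A_n\)-modules. Equivalently, by (ii), \(M\) is a direct summand of an induced module \(A_n\otimes_{B_n}X\). By additivity it suffices to take \(X\) an indecomposable \(B_n\)-module. Since \(B_n\) is a radical square zero algebra whose quiver is a linearly oriented \(\mathbb{A}\)-type path, its indecomposables are the simples \(S_B(v)\) and the two-dimensional modules supported on an arrow of \(B_n\). So the plan is to compute the finitely many induced modules \(A_n\otimes_{B_n}X\) explicitly and decompose them. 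The relatively projective \(A_n\)-modules are then exactly the summands of these, and Lemma~\ref{lemma:RepresentationPropertiesOfGap}(2) guarantees that the list of indecomposable \(A_n\)-modules to compare against is finite.

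\textbf{Projective \(X\).} For \(X=B_ne_v\) with \(e_v\) a vertex idempotent of \(B_n\) (a sum of one or two vertices of \(A_n\)), we get \(A_n\otimes_{B_n}B_ne_v=A_ne_v\). This is a sum of one or two indecomposable projectives, so all projective \(A_n\)-modules are relatively projective.

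\textbf{Simple \(X\).} For \(X=S_B(v)=B_ne_v/J(B_n)e_v\), right exactness of \(A_n\otimes_{B_n}-\) gives
\[
A_n\otimes_{B_n}S_B(v)\cong A_ne_v/A_nJ(B_n)e_v .
\]
By the computation in Lemma~\ref{lemma:RepresentationPropertiesOfGap}(1), each arrow of \(A_n\) starting at a vertex of \(e_v\) occurs as a summand of an arrow of \(B_n\). Using the radical square zero relations, I expect \(A_nJ(B_n)e_v\) to be a submodule of \(J(A_n)e_v\) of codimension equal to (number of \(A_n\)-arrows out of \(e_v\)) minus (number of \(B_n\)-arrows out of \(v\)).

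\begin{itemize}
\item At vertex \(1\): \(A_nJ(B_n)e_1=A_n(\alpha+\beta)\) is one-dimensional. The quotient \(P(1)/\langle\alpha+\beta\rangle\) is three-dimensional, and since it is uniserial-free it should decompose. I need to check whether it contains \(S(1)\) as a summand; item (1) asserts \(S(1)\) is relatively projective, so it must. I will verify this by showing that \(\mu_{S(1)}\) splits, using that \(e_1\) is an idempotent of \(B_n\).
\item At the merged vertices \(e=e_2+e_3\): I will compute the quotient and identify the three-dimensional indecomposables. These are the modules with top two simples and socle one, or top one and socle two, arising in the triangles.
\end{itemize}

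\textbf{Two-dimensional \(X\).} For \(X\) the indecomposable supported on an arrow \(v\to w\) of \(B_n\), I will induce it similarly. I expect to obtain sums of the three-dimensional modules and projectives.

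\textbf{Checking items (1)–(4).} With the list of induced summands in hand, item (3) follows by checking that every three-dimensional indecomposable appears. Item (2) follows by checking that no non-projective two-dimensional module \(S\)-\(S\) on a single arrow appears. Item (4) follows since all summands have dimension at most three, except projectives, which have dimension at most three in these quivers. Item (1) follows by inspecting the simple summands.

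The main obstacle is doing this uniformly in \(n\). The plan is to argue locally: an induced module from a vertex or arrow of \(B_n\) only involves the (at most two) triangles adjacent to it. It therefore suffices to handle the finitely many local configurations in the zigzag — a vertex at an "opening", at a "closing", and one lying in both. For each configuration, a direct verification of the decompositions settles the claim, and an induction on \(n\) transfers the result since passing from step \(n\) to step \(n+1\) only adds one new local configuration.
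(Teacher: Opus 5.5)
Your overall strategy is sound. The relatively projective indecomposables are exactly the indecomposable summands of the finitely many modules $A_n\otimes_{B_n}X$ with $X$ indecomposable over $B_n$. The two-dimensional indecomposable $B_n$-modules are precisely the projectives $B_ne_v$, so only the simples $S_B(v)$ can contribute non-projective summands. The paper's proof is organized the other way: it tests each candidate $M$ by computing $A_n\otimes_{B_n}M$ and asking whether $\mu_M$ splits.

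\textbf{The gap.} The one induced module you actually compute is wrong, and item (1) depends on it. $A_nJ(B_n)e_1=A_n(\alpha+\beta)$ is a left ideal of $A_n$, and $A_n$ contains $e_2,e_3$, which are not in $B_n$. Since $e_2(\alpha+\beta)=\alpha$ and $e_3(\alpha+\beta)=\beta$, we get $A_n(\alpha+\beta)=\langle\alpha,\beta\rangle=J(A_n)e_1$. Hence $A_n\otimes_{B_n}S_B(1)\cong S(1)$ and $\mu_{S(1)}$ is an isomorphism. The line $\Bbbk(\alpha+\beta)$ is not a submodule of $P(1)$, so the quotient you describe is not a module at all; it is also not ``three-dimensional'', since $3-1=2$. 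Arguing that it ``must'' contain $S(1)$ because item (1) says so is circular.

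\textbf{The codimension heuristic.} The same oversight breaks your formula in general. If $b$ is the $B_n$-arrow leaving $v$, then $A_nb$ is spanned by the components $e_ub$, with $u$ running over the $A_n$-vertices reached by $b$. So $\dim A_nb$ equals the number of such vertices, not $1$. For $v=\{2,3\}$ and $n\ge 3$, your formula predicts codimension $2$, i.e.\ a four-dimensional induced module, which would undercut your argument for item (4). In fact $A_n(\gamma+\delta+\varepsilon)=\langle\gamma+\delta,\varepsilon\rangle$, and the quotient is the three-dimensional indecomposable $\Bbbk_2\to\Bbbk_4\leftarrow\Bbbk_3$.

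\textbf{What the corrected computation gives.} The induced indecomposables are:
\begin{itemize}
\item the projectives $P(x)$;
\item $S(1)$;
\item for each non-sink merged vertex $\{x,y\}$ of $B_n$, the module $\Bbbk_x\to\Bbbk_z\leftarrow\Bbbk_y$.
\end{itemize}
Items (1), (2) and (4) then follow as you indicate. For item (3) you still need two observations. First, a three-dimensional indecomposable is either $P(x)$ with $x$ of out-degree two, or $\Bbbk_x\to\Bbbk_z\leftarrow\Bbbk_y$ with $z$ of in-degree two; this uses that $A_n$ is radical square zero, has no multiple arrows, and has out-degrees at most two. Second, every in-degree-two vertex of $A_n$ is the common target of a merged $B_n$-vertex.

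Finally, in your induction on $n$, account for the previously terminal $B_n$-vertex changing type. For example, $\{2,3\}$ is a sink of $B_1$ but not of $B_2$, so its induced module changes from $S(2)\oplus S(3)$ to the three-dimensional module.
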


\begin{proof}
  Note that, by the proof of Lemma \ref{lemma:RepresentationPropertiesOfGap}, all possible indecomposable \(A_n\)-modules are analyzed in this lemma.
  
  Basic computations show that \(S(1)= A_n \otimes_{B_n} S(1)\). If \(e \in A_n\) is a vertex such that \(S(e)\) is not \(A_n\)-projective, then \(A_n \otimes_{B_n} S(e)\) is a three-dimensional indecomposable \(A_n\)-module supported on \(e\) and two other vertices \(\{f,g\}\). Restricting to the vertices with non-zero vector spaces, we obtain that
    \setlength\arraycolsep{1pt}
\[
  \begin{tikzpicture}
    \node (P) at (-2,0) {\(A_n\otimes_{B_n} S(e):\)};
    \node (e) at (0, 0.8*\triangleSize/2) {\((\Bbbk)_e\)};
    \node (f) at (0, -0.8*\triangleSize/2) {\((\Bbbk)_f\)};
    \node (g) at (0.8*\triangleHeight, 0) {\((\Bbbk)_g\)};
    \draw[->] (e) -- (g) node[midway, above] {\(\cong\)};
    \draw[->] (f) -- (g) node[midway, below] {\(\cong\)};
  \end{tikzpicture}
\]
which proves the first assertion.

Similarly, if
\[
  \begin{tikzpicture}
    \node (M) at (-1,0) {\(M:\)};
    \node (e) at (0, 0.8*\triangleSize/2) {\((\Bbbk)_e\)};
    \node (f) at (0, -0.8*\triangleSize/2) {\((0)_f\)};
    \node (g) at (0.8*\triangleHeight, 0) {\((\Bbbk)_g\)};
    \draw[->] (e) -- (g) node[midway, above] {\(\cong\)};
  \end{tikzpicture}
\]
is a two-dimensional non-projective \(A_n\)-module, then
\[
A_n \otimes_{B_n} M \cong A_n \otimes_{B_n} S(e)
\]
for some vertex \(e \in A_n\), which proves the second assertion. The third assertion follows from the first two.

Finally, the relative projective cover of an indecomposable four-dimensional \(A_n\)-module
\[
  \begin{tikzpicture}
    \node (M) at (-1, -0.8*\triangleSize/4) {\(M:\)};
    \node (e) at (0, 0.8*\triangleSize/2) {\((\Bbbk)_e\)};
    \node (f) at (0, -0.8*\triangleSize/2) {\((\Bbbk)_f\)};
    \node (g) at (0.8*\triangleHeight, 0) {\((\Bbbk)_g\)};
    \node (h) at (0.8*\triangleHeight, -0.8*\triangleSize) {\((\Bbbk)_h\)};
    \draw[->] (e) -- (g) node[midway, above] {\(\cong\)};
    \draw[->] (f) -- (g) node[midway, below] {\(\cong\)};
    \draw[->] (f) -- (h) node[midway, below] {\(\cong\)};
  \end{tikzpicture}
\]
is \((A_n e \oplus A_n f)^2 = P(e)^2 \oplus P(f)^2\), which proves the fourth assertion.
\end{proof}

\begin{proposition}
 For the extensions \(B_n \subseteq A_n\) constructed above, we have
\[
\gldim(A_n, B_n) = n.
\]
\end{proposition}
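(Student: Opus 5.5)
The plan is to compute $\pd_{(A_n,B_n)}M$ for every indecomposable $A_n$-module $M$, which is a finite task by Lemma~\ref{lemma:RepresentationPropertiesOfGap}(2). We build relative projective covers explicitly, using the classification of relatively projective indecomposables in Lemma~\ref{lemma:GapRelativeProjectiveModules}. Since $A_n$ is radical square zero, every indecomposable is either a simple module, a two-dimensional module $e\to g$, a three-dimensional ``wedge'' (two sources over one sink, or one source over two sinks), or a four-dimensional zigzag as in the proof of Lemma~\ref{lemma:GapRelativeProjectiveModules}. Larger zigzags along the triangular strip may also appear and must be included in the list.

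The argument goes by induction on $n$, using the recursive description of the quivers. $A_n$ is obtained from $A_{n-1}$ by adjoining one new vertex and one or two arrows, alternating between ``opening'' and ``closing'' a triangle. Correspondingly, $B_n$ is obtained from $B_{n-1}$ by adjoining one vertex (or a sum of two vertices) and one summed arrow. We record, for each indecomposable, the relative syzygy $K_M=\ker(\mu_M)$ from Hochschild's standard construction; by condition (i), $M$ is relatively projective if and only if $\mu_M$ splits.

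For the upper bound $\gldim(A_n,B_n)\le n$, the aim is to show that each relative syzygy of a non-relatively-projective indecomposable is a sum of indecomposables living ``one step further'' along the strip. Concretely, for the four-dimensional zigzag $M$ supported on $e,f,g,h$, the cover $P(e)^2\oplus P(f)^2\to M$ (dimension $6$) has a two-dimensional kernel. This kernel is supported on the sinks of the next triangle and is therefore a non-projective simple or a two-dimensional module further right. Thus $\pd_{(A_n,B_n)}$ decreases by one each time we move a column to the right, and the sink modules are projective. Checking that the kernel is a $B_n$-direct summand, i.e.\ that the sequence is $(A_n,B_n)$-exact, is immediate from Lemma~\ref{lemma:ControllableABExact}, because the kernel is $B_n$-semisimple.

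For the lower bound we exhibit a module of relative projective dimension exactly $n$. The natural candidate is the module generalizing $M$ of Example~\ref{ex:NonControllableExtension}, supported on vertex $1$ and the vertex $2$ adjacent to it. Its successive relative syzygies walk across the strip, each step landing on a non-relatively-projective module by Lemma~\ref{lemma:GapRelativeProjectiveModules}(2),(4), until reaching a projective at the last column after $n$ steps. To certify non-vanishing rigorously, we would use \eqref{eq:PdAsTorZero} or minimality of relative projective covers: a relative resolution built from relative projective covers is minimal, so its length equals the relative projective dimension.

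The main obstacle is the bookkeeping that the syzygy of each non-relatively-projective module at column $k$ is again non-relatively-projective (and not merely relatively projective plus junk) exactly until the last column. This requires handling the alternation between the ``opening'' step (two arrows out of a vertex) and the ``closing'' step (two arrows into a vertex) separately. We would first compute the $n=2$ and $n=3$ cases by hand to identify the pattern, and then phrase the induction as: the relative syzygy of the extremal module of $A_n$ is the pullback to $A_n$ of the extremal module of a shifted copy of $A_{n-1}$.
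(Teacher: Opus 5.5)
Your overall strategy is the paper's own: its proof is a finite case-by-case computation of minimal relative resolutions of the indecomposable $A_n$-modules, using Lemma~\ref{lemma:GapRelativeProjectiveModules}, with $(\Bbbk)_1\to(\Bbbk)_2$ as the extremal module. The gap is that the concrete mechanism you give for the upper bound, and your proposed induction, are wrong, so the upper bound is not established.

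(a) The zigzag computation is incorrect. $P(e)$ (one outgoing arrow) has dimension $2$ and $P(f)$ (two outgoing arrows) has dimension $3$. Hence $A_n\otimes_{B_n}M\cong P(e)^2\oplus P(f)^2$ is $10$-dimensional and $K_M$ is $6$-dimensional. In fact $K_M\cong P(e)\oplus P(f)\oplus S(g)$. Its only possibly non-projective summand is the simple $S(g)$ in the zigzag's own target column, not a module of the ``next triangle'', and this gives $\pd_{(A_n,B_n)}M\le 2$.

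(b) Lemma~\ref{lemma:ControllableABExact} does not apply. It concerns epimorphisms between $\quotient{A_n}{B_n}$-modules, which here are the semisimple $A_n$-modules. Moreover, a $B_n$-semisimple submodule need not be a $B_n$-summand. For example, $J(A_n)e_1\subseteq P(1)$ is not one: otherwise $P(1)\to S(1)$ would split, because $S(1)$ is relatively projective. You do not need this anyway, since $\mu_M$ always has the $B_n$-section $m\mapsto 1\otimes m$.

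(c) Most seriously, ``pd drops by one per column'' cannot hold. $A_n$ has only $\lfloor n/2\rfloor+2$ columns, so this would force $\gldim(A_n,B_n)\le\lfloor n/2\rfloor+1<n$ for $n\ge 3$, contradicting your own lower bound. For the same reason the recursion through a shifted copy of $A_{n-1}$ fails: the first relative syzygy of $(\Bbbk)_1\to(\Bbbk)_2$ is the simple $S(3)$, not an extremal two-dimensional module.

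Here is what actually happens. Label the vertices $1,\dots,n+2$ column by column, so that the arrows are $1\to 2$, $1\to 3$, and $2j\to 2j+2$, $2j\to 2j+3$, $2j+1\to 2j+2$ whenever the target exists. Write $(u\to v)$ for the two-dimensional module on an arrow $u\to v$. Computing $K_X=\ker\mu_X$ directly gives
\[
K_{(1\to 2)}=S(3),\qquad K_{S(2j+1)}=(2j\to 2j+2),\qquad K_{(2j\to 2j+2)}\cong P(2j+1)\oplus S(2j+3).
\]
So the successive minimal relative syzygies of the extremal module are $S(3),(2\to 4),S(5),(4\to 6),\dots$. They alternate between simples and two-dimensional modules, which are non-relatively projective by parts (1) and (2) of Lemma~\ref{lemma:GapRelativeProjectiveModules} (not (2) and (4)). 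They advance only half a column per step, and reach a projective after exactly $n$ steps: $S(n+2)$ for $n$ odd, and $(n\to n+2)=P(n)$ for $n$ even.

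This is the correct form of your recursion: it has period two. Deleting the vertices $1$ and $3$ leaves a quiver shaped like that of $A_{n-2}$, on which $(2\to 4)$ plays the role of $(1\to 2)$.

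For the upper bound, every non-relatively-projective indecomposable off this chain has relative projective dimension at most $2$:
\[
K_{S(2j)}=P(2j+1),\qquad K_{(1\to 3)}=S(2),\qquad K_{(2j\to 2j+3)}\cong P(2j+1)\oplus S(2j+2),
\]
and the zigzags are handled as in (a). The case $n=1$ is Example~\ref{ex:NonControllableExtension}.

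Finally, larger zigzags do not occur. The non-trivial components of the separated quiver of $A_n$ are of type $\mathbb{A}_3$ or $\mathbb{A}_4$, which is why Lemma~\ref{lemma:GapRelativeProjectiveModules} covers every indecomposable.
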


\begin{proof}
  The proof consists of a case-by-case analysis of minimal relative projective resolutions of indecomposable \(A_n\)-modules. This is possible since \(A_n\) is of finite representation type and Lemma \ref{lemma:GapRelativeProjectiveModules} classifies the relatively projective indecomposable modules.

  Carrying out the computation, and using the notation of Figure \ref{figure:ArbitraryGapConstruction}, we obtain
  \[
    \pd_{(A_n, B_n)} M \leq
    \pd_{(A_n, B_n)}
    \left(
      \begin{tikzcd}
        (\Bbbk)_1 \arrow[r, "\cong"] & (\Bbbk)_2
      \end{tikzcd}
    \right)
    = n.
  \]

  Hence \(\gldim(A_n, B_n) = n\). Moreover,
  \[
    \gldim(A_n, B_n) - \gldim(\quotient{A_{n}}{B_{n}})
    = n - 0 = n.
  \]
\end{proof}

\begin{rmk}
  It is not known whether the gap can be negative, or whether every positive integer can be realized as the gap of an extension \(B \subseteq A\) satisfying \(AJ(B) \lhd A\).
\end{rmk}

\begin{rmk}
  Consider \(C_n \subseteq A_n\) to be the subalgebra generated by \(\langle \Sigma_{B_{n}}, J(A_n) \rangle\), where \(\Sigma_{B_n}\) is the semisimple algebra generated by the idempotents of \(B_n\). In quiver terms, for \(n=6\), it is the radical square zero algebra given by
   \begin{equation*}
      \begin{tikzcd}
        \bullet \arrow[r, bend left] \arrow[r, bend right] & \bullet \arrow[r] \arrow[r, bend left, shift left] \arrow[r, bend right, shift right] & \bullet \arrow[r] \arrow[r, bend left, shift left] \arrow[r, bend right, shift right] & \bullet \arrow[r] \arrow[r, bend left, shift left] \arrow[r, bend right, shift right] & \bullet \arrow[r, bend left] \arrow[r, bend right] & \bullet
      \end{tikzcd}
    \end{equation*}
  with omitted labels; regarding Figure \ref{subfigure:GapOfSix}, we are only summing the idempotents in the same vertical alignment without summing the arrows. The extension \(C_n \subseteq A_n\) is controllable, for they have the same radical, by Corollary \ref{cor:EqualRadicalExtensionSemisimple}. Moreover, since all involved algebras are radical square zero, then
    \[
      C_{n} J(B_{n}) C_{n} = J(B_{n}),
    \]
  so that \(B_{n} \subseteq C_{n}\) is homologically controllable, by Theorem \ref{teo:BilateralIdealIsControllable}. This means that even though the extension \(B_{n} \subseteq A_{n}\) is non-controllable, it can be viewed as a tower of extensions \(B_{n} \subseteq C_{n} \subseteq A_{n}\) such that the partial extensions, \(B_{n} \subseteq C_{n}\) and \(C_{n} \subseteq A_{n}\), are homologically controllable.
  
  This remark raises the following question: let \(B \subseteq A\) be an extension of algebras that is not controllable, suppose that both algebras are given by quivers and relations, is it possible to construct an algebra \(C\) such that both \(B \subseteq C\) and \(C \subseteq A\) are controllable?
\end{rmk}

\bibliographystyle{alpha}
\bibliography{bib}

\end{document}